\title{Partially traced categories\footnote{This research was
    supported by NSERC.}}
\author{Octavio Malherbe, Philip J. Scott and Peter Selinger}
\date{}
\begin{document}
\maketitle

\begin{abstract}
This paper deals with
questions relating to Haghverdi and Scott's notion of partially traced
categories. The main result is a representation theorem for such
categories: we prove that every partially traced category can be
faithfully embedded in a totally traced category. Also conversely,
every symmetric monoidal subcategory of a totally traced category is partially
traced, so this characterizes the partially traced categories
completely. The main technique we use is based on Freyd's
paracategories, along with a partial version of Joyal, Street, and
Verity's $\Int$-construction.

\end{abstract}

\section{Introduction}
Partially traced monoidal categories were introduced by Haghverdi and
Scott {\cite{HS05a,HS09}} as a general framework for modelling a typed
categorical version of Girard's Geometry of Interaction.  The Geometry
of Interaction (GoI) was developed by Girard in a series of
influential works that examine dynamical models of proofs in linear
logic and their evaluation under normalization, using operator
algebras and functional analysis
{\cite{GIRARD88,GIRARD95,GIRARD03,GIRARD011}}.  This program has
recently received increased attention as also having connections with
quantum computation and quantum protocols
{\cite{AbcoeckeA,Sel2004,Sel04b}}.

One of the objectives of this article is to systematically explore the Haghverdi-Scott
notion of partially traced categories by providing a representation
theorem which establishes a precise correspondence between partially
traced and totally traced categories. 

\section{Background}

To fix the notation for this paper, we briefly recall some basic
notions from monoidal category theory. For more details, see
e.g.~\cite{Borceux94, LambekScott86, MacLane91}.

\subsection{Monoidal categories}
\begin{definition}
  \rm A \textit{monoidal} category, also sometimes called
  \textit{tensor} category, is a category $\cC$ with a bifunctor
  $\otimes:\cC\times\cC\rightarrow \cC$ together with a unit object
  $I\in\cC$ and natural isomorphisms $\rho_A:A\otimes
  I\stackrel{\cong}\rightarrow A$, $\lambda_A:I\otimes
  A\stackrel{\cong}\rightarrow A$, and $\alpha_{A,B,C}:A\otimes
  (B\otimes C)\stackrel{\cong}\rightarrow (A\otimes B)\otimes C$,
  satisfying some coherence axioms~\cite{MacLane91}. The monoidal
  category is {\em strict} if $\rho$, $\lambda$, and $\alpha$ are
  identities. It is well-known that every monoidal category is
  equivalent to a strict one {\cite{MacLane91}}. Here, and throughout
  the paper, we often omit the subscripts from notations such as
  $\alpha_{A,B,C}$ when they are clear from the context.
\end{definition}

\begin{definition}
\rm
A \textit{symmetric} monoidal category consists of a monoidal category $\cC$ with a chosen natural isomorphism $\sigma_{A,B}:A\otimes B\stackrel{\cong}\rightarrow B\otimes A$, called {\em symmetry}, which satisfies $\sigma_{B,A}\circ\sigma_{A,B}=\id_{A\x B}$, $\lambda_A\circ\sigma_{A,I}=\rho_A$, and $\alpha_{C,A,B}\circ \sigma_{A\x B,C}\circ\alpha_{A,B,C} = (\sigma_{A,C}\x\id_B)\circ\alpha_{A,C,B}\circ(\id_A\x\sigma_{B,C})$.
\end{definition}

\begin{definition}\label{def-monoidal-functor}
\rm
A \textit{monoidal functor} $(F,m_{A,B},m_I)$ between monoidal categories $(\cC,\otimes,I,\alpha,\rho,\lambda)$ and $(\cD,\otimes',I',\alpha',\rho',\lambda')$ is a functor $F:\cC\rightarrow\cD$ equipped with:
\begin{itemize}
\item[-]
morphisms $m_{A,B}:F(A)\otimes'F(B)\rightarrow F(A\otimes B)$ natural in $A$ and $B$,
\item[-] a morphism $m_I:I'\rightarrow F(I)$,
\end{itemize}
which satisfy some coherence axioms preserving the symmetric monoidal
structure~\cite{MacLane91}.  A monoidal functor is \textit{strong}
when $m_I$ and all the $m_{A,B}$ are isomorphisms. It is
\textit{strict} when $m_I$ and all the $m_{A,B}$ are identities.

If in addition, $\cC$ and $\cD$ are symmetric monoidal with respective
symmetries $\sigma$ and $\sigma'$, then $F$ is a {\em symmetric}
monoidal functor if for all $A,B$,
\[\xymatrix@=20pt{
FA\otimes' FB \ar[r]^{\sigma'}  \ar[d]_{m}& FB\otimes'FA \ar[d]^{m}  \\
     F(A\otimes B)  \ar[r]_{F(\sigma)}                       &  F(B\otimes A).
}\]
\end{definition}

\subsection{Traced monoidal categories}\label{ssec-traced-monoidal-categories}
Traced monoidal categories were introduced by Joyal, Street and Verity
\cite{JSV96} as an attempt to model an abstract notion of trace
arising in different fields of mathematics, such as algebraic
topology, knot theory, and theoretical computer science. In computer
science, this abstraction has been particularly useful in the
description of feedback, fixed-point operators, the execution formula
in Girard's Geometry of Interaction \cite{GIRARD88}, etc.

\begin{definition}\label{def-traced-monoidal-cat}
\rm A \textit{trace} for a symmetric monoidal category $(\cC,\otimes,I,\rho,\lambda,\sigma)$ consists of a family of functions
\[\Tr^U_{A,B}:\cC(A\otimes U,B\otimes U)\rightarrow \cC(A,B),\]
natural in $A$, $B$, and dinatural in $U$, satisfying the following axioms. Here we write without loss of generality as if $\cC$ were strict.

\begin{itemize}
\item\textbf{Strength:}
For all $f:A\otimes U\rightarrow B\otimes U$ and $g:C\rightarrow D$,
\[ g\otimes \Tr^{U}_{A,B}(f)= \Tr^{U}_{C\otimes A,D\otimes B}(g\otimes f).
\]
\item\textbf{Vanishing I:}
For all $f:A\x I\to B\x I$,
\[ f = \Tr^{I}_{A,B}(f).
\]
\item\textbf{Vanishing II:}
For all $f:A\x U\x V\to B\x U\x V$,
\[ \Tr^{U}_{A,B}(\Tr^{V}_{A\otimes U,B\otimes U}(f))=
\Tr^{U\otimes V}_{A,B}(f).
\]
\item\textbf{Yanking:}
For all $A$,
\[ \Tr^{A}_{A,A}(\sigma_{A,A})=1_{A}.
\]
\end{itemize}
Because we need them later, we explicitly spell out the conditions of
naturality and dinaturality:
\begin{itemize}
\item\textbf{Naturality:}
For all $f:A\x U\to B\x U$, $g:A'\rightarrow A$, and $h:B\rightarrow B'$,
\[ h \circ \Tr^{U}_{A,B}(f)\circ g=\Tr^{U}_{A',B'}((h\otimes 1_{U})\circ f\circ(g\otimes 1_{U})).
\]
\item\textbf{Dinaturality:}
For all $f:A\otimes U\rightarrow B\otimes U'$ and $g:U'\rightarrow U$,
\[ \Tr^{U}_{A,B}((1_{B}\otimes g)\circ f)=\Tr^{U'}_{A,B}(f\circ (1_{A}\otimes g)).
\]
\end{itemize}
\end{definition}

\begin{definition}
\rm
Let $(\cC,\Tr)$ and $(\cD,\Tr')$ be traced monoidal categories.
We say that a strong symmetric monoidal functor $(F,m):\cC\rightarrow\cD$ is \textit{traced monoidal} when it preserves the trace operator in the following sense:
for all $f:A\otimes U\rightarrow B\otimes U$,
\[ {\Tr'}^{FU}_{FA,FB}(m^{-1}_{A,U}\circ F(f)\circ m_{A,U})=F(\Tr^U_{A,B}(f))
~:~ FA\rightarrow FB.
\]
\end{definition}

\subsection{Graphical language}\label{ssec-graphical-total}

\begin{table}
  \begin{center}
    \begin{tabular}{@{}l@{}}
      Object $A$:\\
      \tmath{\wirechart{@C=.7cm}{*{}\wireright{rr}{A}&&}} \\\\
      Morphism $f:$ {\small $A_1\,{\x}\,\ldots\,{\x}\, A_n\,{\to}\, B_1\,{\x}\,\ldots\,{\x}\, B_m$}: \\\\
      \tmath{\wirechart{@C=1.5cm@R=0.4cm}{
        *{}\wireright{r}{A_n}&
        \blank\wireright{r}{B_m}&
        \\
        *{}\wireright{r}{A_1}^<>(.8){\vdots}&
        \blank\ulbox{[].[u]}{f}\wireright{r}{B_1}^<>(.2){\vdots}&
        \\
      }}\\\\
      Composition  $g\circ f:A\to C$: \\\\
      \tmath{\wirechart{}{*{}\wireright{r}{A}&
        \blank\ulbox{[]}{f}\wireright{r}{B}&
        \blank\ulbox{[]}{g}\wireright{r}{C}&
      }}
    \end{tabular}
    \begin{tabular}{@{}llc@{}}
      Tensor  $f\x g:A\x C\to B\x D$: \\\\
      \tmath{\wirechart{@C=1.5cm@R=0.4cm}{
          \vsblank\wireright{r}{C}&\blank\ulbox{[]}{g}\wireright{r}{D}&\\
          \vsblank\wireright{r}{A}&\blank\ulbox{[]}{f}\wireright{r}{B}&\\
        }}\\\\
      Symmetry  $\sym_{A,B}:A\x B\to B\x A$: \\\\
      \tmath{\wirechart{@C=1.5cm@R=0.4cm}{
          *{}\wireright{r}{B}&\blank\wirecross{d}\wireright{r}{A}&\\
          *{}\wireright{r}{A}&\blank\wirecross{u}\wireright{r}{B}& }}\\\\
      Trace  $\Tr^U_{A,B} f:A\to B$: \\\\
      \tmath{\wirechart{@C=1cm@R=.5cm}{
          \blank\wireopen{d}\wire{rr}{}&&
          \blank\wireclose{d}
          \\
          \blank\wireright{r}{U}&
          \blank\wireright{r}{U}&
          \blank
          \\
          \wireid\wireright{r}{A}&
          \blank\ulbox{[].[u]}{f}\wireright{r}{B}&
          \wireid
          \\}}
    \end{tabular}
  \end{center}
  \caption{The graphical language of traced monoidal categories}
  \label{tab-graphical-traced}
\end{table}

Graphical calculi are a useful tool for reasoning about monoidal
categories, dating back at least to the work of Penrose~\cite{Pen}.
There are various graphical languages that are provably sound and
complete for equational reasoning in different kinds of monoidal
categories. They allow efficient geometrical and topological insights
to be used in a kind of calculus of ``wirings'', which simplifies
diagrammatic reasoning. See~\cite{Sel2009} for a detailed survey of
such graphical languages.

In particular, there is a graphical language for traced monoidal
categories, which was already used in Joyal, Street, and Verity's
original paper~\cite{JSV96}. As shown in
Table~\ref{tab-graphical-traced}, wires represent objects, boxes
represent morphisms, composition is represented by connecting the
outgoing wires of one diagram to the incoming wires of another, and
tensor product is represented by stacking wires and boxes
vertically. Finally, trace is represented by a loop.  The axioms of
traced (symmetric) monoidal categories are illustrated in
Table~\ref{tab-graphical-axioms}.

\begin{table}

\begin{center}
\begin{tabular}{l}
  Naturality:\\\\
  \tmath{
    \raisebox{-2mm}{\includegraphics[scale=0.65]{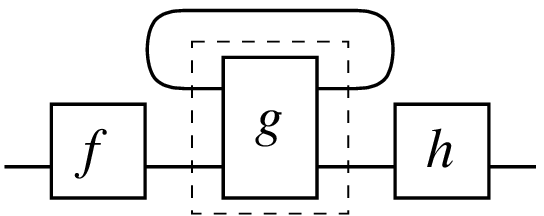}}=
    \raisebox{-2mm}{\includegraphics[scale=0.65]{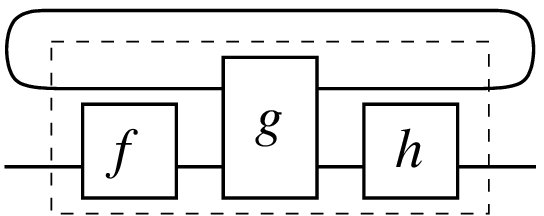}}}\\\\

  Dinaturality:\\\\
  \tmath{
    \raisebox{-2mm}{\includegraphics[scale=0.65]{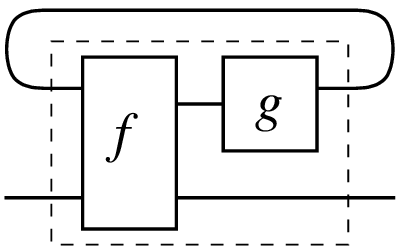}}=
    \raisebox{-2mm}{\includegraphics[scale=0.65]{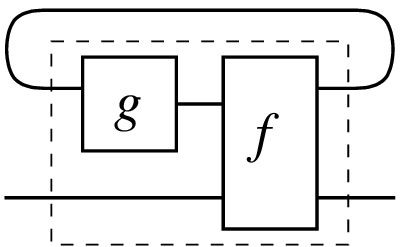}}}\\\\

  Strength:\\\\
  \tmath{
    \raisebox{-6.5mm}{\includegraphics[scale=0.65]{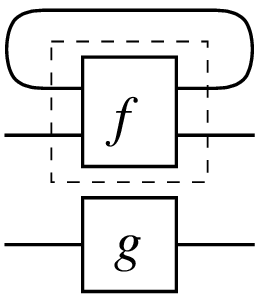}}=
    \raisebox{-7.4mm}{\includegraphics[scale=0.65]{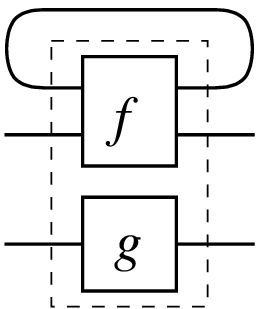}}}
\end{tabular}\begin{tabular}{l}
  Vanishing I:\\\\
  \tmath{
    \raisebox{-2mm}{\includegraphics[scale=0.65]{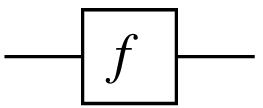}}=
    \raisebox{-2mm}{\includegraphics[scale=0.65]{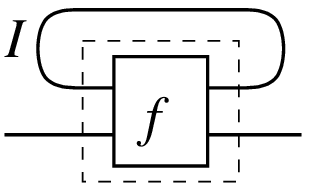}}}\\\\

  Vanishing II:\\\\
  \tmath{
    \raisebox{-3mm}{\includegraphics[scale=0.65]{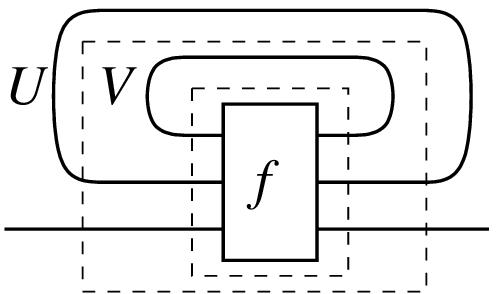}}=
    \raisebox{-2mm}{\includegraphics[scale=0.65]{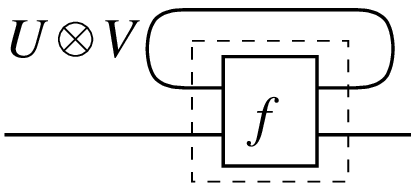}}}\\\\

  Yanking:\\\\
  \tmath{
    \raisebox{-1mm}{\includegraphics[scale=0.65]{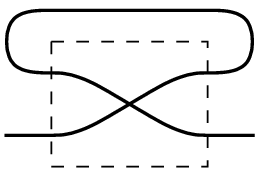}}=
    ~\raisebox{1mm}{\includegraphics[scale=0.65]{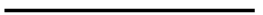}}}
\end{tabular}
\end{center}

\caption{The axioms of traced monoidal categories}
\label{tab-graphical-axioms}
\end{table}

We say that a diagram is {\em expanded} if all its wires are labelled
by object variables and all its boxes are labelled by morphism
variables (as opposed to composite object and morphism terms). Thus,
for example, a wire labelled $A\x B$ is not expanded, but a pair of
wires labelled $A$ and $B$ is expanded. Each non-expanded diagram can
be converted to an equivalent expanded diagram.  The following theorem
shows the validity of diagrammatic reasoning in traced monoidal
categories.

\begin{theorem}[Coherence, see~\cite{Sel2009}]
  A well-formed equation between morphisms in the language of
  symmetric traced monoidal categories follows from the axioms of
  symmetric traced monoidal categories if and only if it holds, up to
  isomorphism of expanded diagrams, in the graphical language.
\end{theorem}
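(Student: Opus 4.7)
The plan is to handle the two directions of the biconditional separately, with soundness being a routine picture check and completeness carrying the substantive content. For soundness I would verify that each axiom in Definition~\ref{def-traced-monoidal-cat}, together with the underlying symmetric monoidal axioms, corresponds to a homeomorphism of expanded diagrams preserving wire and box labels. Inspecting Table~\ref{tab-graphical-axioms} case by case, each equation is evidently such an equivalence: naturality slides a box across the boundary of a feedback loop, dinaturality slides a morphism around the loop, strength pushes a tensor factor out of the loop, vanishing I trivialises a loop labelled by $I$, vanishing II merges two nested loops into one, and yanking straightens a twisted identity. The usual structural isomorphisms $\alpha,\lambda,\rho,\sigma$ correspond to the topological moves for symmetric monoidal categories, for which the analogous coherence statement is already known~\cite{JSV96}.

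For completeness, my strategy is to reduce to the known coherence theorem for symmetric monoidal categories via a normal-form argument. After strictifying the monoidal structure, I would show that every morphism in the free symmetric traced monoidal category on a signature of generators can be rewritten in the form $\Tr^{U}_{A,B}(f)$ with a single outermost trace and a trace-free interior $f$. The rewrites are driven by the axioms: strength absorbs an external $g\otimes -$ into the loop, naturality absorbs pre- and post-composed morphisms, vanishing II combines nested traces, vanishing I eliminates a vacuous trace, and dinaturality reroutes morphisms around the loop to bring them into canonical position. With every morphism in this normal form, an isomorphism of expanded diagrams reduces to a symmetric monoidal isomorphism of the trace-free interior together with a matching identification of how the two ends of the loop are glued, handled respectively by the underlying symmetric monoidal coherence theorem and by dinaturality composed with symmetries. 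An appealing parallel route, which I would pursue as a sanity check, is to invoke the Joyal--Street--Verity $\Int$-construction and reduce to the coherence theorem for compact closed categories, since the graphical language of traced categories is directly inherited from that of $\Int(\cC)$.

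The main obstacle I expect, in either approach, is the same: showing that the passage between syntax and diagrams does not identify distinct morphisms of the free traced category. In the normal-form approach this means proving that the rewriting system based on the axioms is confluent and terminating and then checking that any remaining equation between trace-free interiors is purely symmetric monoidal in character; in particular, dinaturality is naturally a two-way equation rather than an oriented reduction, so orienting it correctly will take some care. In the $\Int$-construction approach it amounts to proving faithfulness of the canonical embedding $\cC \hookrightarrow \Int(\cC)$. Either way, the crux is to verify that a feedback loop in a diagram carries no information beyond its two endpoints, which is precisely the content of dinaturality combined with yanking.
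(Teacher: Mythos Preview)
The paper does not prove this theorem at all: it is stated as a known result with a citation to~\cite{Sel2009} and no proof is given in the text. So there is no ``paper's own proof'' to compare against; the authors simply import the coherence theorem from the literature as background.

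That said, your sketch is a reasonable outline of how such coherence results are actually established. The normal-form route you describe---rewriting to a single outermost trace $\Tr^U(f)$ over a trace-free $f$ and then invoking symmetric monoidal coherence on the interior---is close in spirit to the argument in the reference, and your identification of dinaturality as the delicate, inherently non-oriented rule is exactly right. Your alternative via the $\Int$-construction is also standard: one reduces traced coherence to compact closed coherence, for which Kelly--Laplaza is available. Either way, the genuine work lies where you put it, in showing that distinct normal forms are not identified (equivalently, faithfulness of the embedding into $\Int(\cC)$); your proposal flags this but does not discharge it, so as written it is a plan rather than a proof. For the purposes of this paper, however, that is all that is required, since the theorem is being quoted, not proved.
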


\subsection{Compact closed categories}

\begin{definition}
  \rm A \textit{compact closed} category is a symmetric monoidal
  category $\cC$ in which for every object $A$, there is a given
  object $A^{*}$, called the {\em dual} of $A$, and a given pair of
  arrows $\eta:I\rightarrow A^{*}\otimes A$ (called the {\em unit}),
  $\eps:A\otimes A^{*}\rightarrow I$ (called the {\em counit})
  such that the following are identities:
  \[  A \catarrow{\rho^{-1}} A\x I \catarrow{\id\x\eta} A\x(A^*\x A)
  \catarrow{\alpha} (A\x A^*)\x A\catarrow{\eps\x\id} I\x A
  \catarrow{\lambda} A ~=~ \id_A,
  \]
  \[  A^* \catarrow{\lambda^{-1}} I\x A^* \catarrow{\eta\x\id} (A^*\x A)\x A^*
  \catarrow{\alpha^{-1}} A^*\x(A\x A^*) \catarrow{\id\x\eps}
  A^*\x I\catarrow{\rho} A^* ~=~ \id_{A^*}.
  \]
\end{definition}

\begin{proposition}\label{prop-canonical-trace}
  Let $\cC$ be a compact closed category. Then $\cC$ has a unique
  trace, which we call the {\em canonical trace}. It is defined as
  follows (here we write without loss of generality as if the category
  were strict):
  \[ \Tr^U_{A,B}(f) ~=~
  A \catarrow{\id\x\eta} A\x U^*\x U
  \catarrow{\id\x\sigma} A\x U\x U^*
  \catarrow{f\x\id} B\x U\x U^*
  \catarrow{\id\x\eps} B.
  \]
  Moreover, every strong symmetric monoidal functor between compact
  closed categories preserves the compact closed structure, and
  therefore the canonical trace.
\end{proposition}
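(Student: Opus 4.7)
The plan is to establish three parts of the proposition in sequence: existence of the canonical trace, its uniqueness, and preservation by strong symmetric monoidal functors.

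\textbf{Existence.} I would verify that the proposed formula satisfies each of the six trace axioms of Definition~\ref{def-traced-monoidal-cat}: naturality, dinaturality, strength, vanishing~I, vanishing~II, and yanking. The graphical calculus of Section~\ref{ssec-graphical-total}, extended with a cap for $\eta$ and a cup for $\eps$, makes this routine: the canonical trace of $f$ is drawn as the box $f$ with its output $U$-wire bent back via a cup-and-cap to rejoin the input $U$-wire. Each axiom then reduces to an elementary topological rearrangement. Yanking follows from the snake (triangle) identity; vanishing~I from the fact that $\eta_I = \eps_I = \id_I$; vanishing~II from nesting bent wires; strength and naturality are immediate from bifunctoriality of $\x$; and dinaturality follows from naturality of $\eta$, $\eps$, and $\sigma$.

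\textbf{Uniqueness.} Let $\Tr'$ be any trace on $\cC$, and let $f : A \x U \to B \x U$. I would show that $\Tr'^U_{A,B}(f)$ coincides with the canonical formula. The strategy is to slide the compact-closed structure across the traced wire. Concretely, one uses $\eta_U$ to introduce a $U^* \x U$ pair at the input side, strength to move this past the trace symbol, and dinaturality in the traced variable together with the yanking identity $\Tr'^U(\sigma_{U,U}) = \id_U$, to rewrite $\Tr'^U(f)$ step by step until the $U$-loop is broken open into a cup-cap configuration. The resulting expression agrees with the canonical formula.

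\textbf{Preservation.} A strong symmetric monoidal functor $(F,m) : \cC \to \cD$ transports the compact closed structure: defining $\eta'_A = m_{A^*, A}^{-1} \circ F(\eta_A) \circ m_I$ and $\eps'_A = m_I^{-1} \circ F(\eps_A) \circ m_{A, A^*}$ equips $FA$ with dual $F(A^*)$, and the triangle identities follow from the coherence axioms of $m$ and functoriality of $F$. Hence $F(A^*) \cong (FA)^*$ canonically as compact closed data. Since the canonical trace is expressed purely in terms of $\eta$, $\eps$, $\sigma$, and the monoidal structure, all of which $F$ preserves up to $m$, the canonical trace is preserved.

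\textbf{Main obstacle.} The delicate part is uniqueness. Graphically, the claim that ``any trace loop equals the cup-cap expression'' is visually obvious, as the two diagrams are ambient-isotopic. Translating this isotopy into a sequence of axiom applications requires care: one must orchestrate strength, dinaturality, and yanking in the correct order, while tracking the coherence isomorphisms that are hidden when one works in a strict setting. Existence and functorial preservation, by contrast, are essentially bookkeeping once the graphical picture is fixed.
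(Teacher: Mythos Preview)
Your proposal is correct and follows the standard arguments. The paper itself does not give a self-contained proof but simply cites Joyal--Street--Verity for existence and Hasegawa (Appendix~B) for uniqueness; your sketch is essentially the content of those references.
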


\begin{proof}
See~\cite{JSV96}. For uniqueness of the trace, see
\cite[Appendix B]{Has08}.
\end{proof}

\section{Partially traced categories}

Partially traced symmetric monoidal categories were introduced by
Haghverdi and Scott \cite{HS05a} as part of a categorical framework
for a typed version of the Geometry of Interaction (GoI).  

An important aspect of modelling the dynamics of proofs in Girard's
concrete models of GoI is that proofs are interpreted as operators,
and cut-elimination (normalization) is interpreted in terms of
feedback (the ``execution formula'').  Haghverdi and Scott
\cite{HS05a} used a partial trace to define a categorical version of
the execution formula.  This execution formula is (for large classes
of sequents) an invariant of the cut-elimination process.  Types are
given by an abstract orthogonality relation in the sense of Hyland and
Schalk \cite{HylSch03}. Such an orthogonality relation arises
naturally in the partially traced setting, and yields the required
convergence properties of Girard's execution formula.  Thus, partially
traced categories (with additional structure) provide the necessary
ingredients for running Girard's GoI machinery.

We note that, while totally traced categories are a special case of
partially traced categories, partiality was important in constructing
nontrivial types in the typed version of GoI in {\cite{HS05a}}.
Indeed, if one assumes a total trace in this setting, the type
structure collapses. By contrast, the earlier analysis of GoI in
{\cite{HS06a}} used a {\em total} categorical trace, but required the
category to be equipped with a reflexive object satisfying appropriate
domain equations, which is a very strong assumption.

In this section, we recall the definition of a partially traced
category, and give some examples. We also show that each symmetric
monoidal subcategory of a (totally or partially) traced category is
partially traced, which gives rise to many more examples.

\subsection{Definition of partially traced categories}

We recall the definition of a partially traced (symmetric monoidal) category from \cite{HS05a}. We begin with some notation for partial functions.

\begin{definition}\label{def-kleene-equality}
 Let $f$ and $g$ be partially defined operations. We write
 $f(x)\defined$ if $f(x)$ is defined, and $f(x)\undefined$ if it is
 undefined. Following Freyd and Scedrov~\cite{FREYD-SCEDROV-Categories-Allegories},
 we also write $f(x)\kleeneeq g(x)$ if $f(x)$ and $g(x)$ are either
 both undefined, or else they are both defined and equal. The relation
 ``$\kleeneeq$'' is known as {\em Kleene equality}. We also write
 $f(x)\kleeneleq g(x)$ if either $f(x)$ is undefined, or else $f(x)$ and
 $g(x)$ are both defined and equal. The relation ``$\kleeneleq$'' is
 known as {\em directed Kleene equality}.
\end{definition}

\begin{definition}[\cite{HS05a,HS09}]\label{def-partial-trace}
  Suppose $(\cC,\otimes,I,\rho,\lambda,\sigma)$  is a symmetric monoidal
  category. A {\em partial trace} is given by a family of partial
  functions $\Tr^U_{A,B} : \cC(A\otimes U, B\otimes U) \rightharpoonup
  \cC(A,B)$, satisfying the following axioms. Once again, we write the
  axioms as if $\cC$ were strict.
\begin{itemize}
\item\textbf{Naturality:}
For all $f:A\x U\to B\x U$, $g:A'\rightarrow A$, and $h:B\rightarrow B'$,
\[ h \circ \Tr^{U}_{A,B}(f)\circ g\kleeneleq\Tr^{U}_{A',B'}((h\otimes 1_{U})\circ f\circ(g\otimes 1_{U})).
\]
\item\textbf{Dinaturality:}
For all $f:A\otimes U\rightarrow B\otimes U'$ and $g:U'\rightarrow U$,
\[ \Tr^{U}_{A,B}((1_{B}\otimes g)\circ f)\kleeneeq\Tr^{U'}_{A,B}(f\circ (1_{A}\otimes g)).
\]
\item\textbf{Strength:}
For all $f:A\otimes U\rightarrow B\otimes U$ and $g:C\rightarrow D$,
\[ g\otimes \Tr^{U}_{A,B}(f)\kleeneleq \Tr^{U}_{C\otimes A,D\otimes B}(g\otimes f).
\]
\item\textbf{Vanishing I:}
For all $f:A\x I\to B\x I$,
\[ f\kleeneeq \Tr^{I}_{A,B}(f).
\]
\item\textbf{Vanishing II:}
For all $f:A\x U\x V\to B\x U\x V$,
\[ \Tr^V_{A\otimes U,B\otimes U}(f)\,\defined \quad\mbox{implies}\quad
\Tr^{U}_{A,B}(\Tr^{V}_{A\otimes U,B\otimes U}(f))\kleeneeq
\Tr^{U\otimes V}_{A,B}(f).
\]
\item\textbf{Yanking:}
For all $A$,
\[ \Tr^{A}_{A,A}(\sigma_{A,A})\kleeneeq 1_{A}.
\]
\end{itemize}
A {\em partially traced} category is a symmetric monoidal
 category with a partial trace.
\end{definition}

Note that in the vanishing I axiom, the left-hand side is always
defined, so Kleene equality in this case just means that
$\Tr^{I}_{A,B}(f)$ is always defined an equals $f$. A similar remark
applies to the yanking axiom. 

\begin{remark}
  Comparing this to Definition~\ref{def-traced-monoidal-cat}, we see that a
  traced monoidal category is just a partially traced category where
  the trace operation happens to be total. We sometimes refer to
  traced monoidal categories as {\em totally} traced categories, when
  we want to emphasize that they are not partial.
\end{remark}

 \begin{definition}
 The subset of $\cC(A\otimes U,B\otimes U)$ where
 $\Tr^U_{A,B}$ is defined is sometimes called the {\em trace class}, and
 is written
 \[
 \Trc^U_{A,B} = \s{ f:A\otimes U\rightarrow B\otimes U \mid
   \Tr^U_{A,B}(f)\,\defined }.
 \]
\end{definition}

\begin{remark}\label{rem-naturality}
  In case $g$ and $h$ are isomorphisms, by naturality we have
  \[
  h^{-1} \circ \Tr^{U}_{A,B}(f')\circ g^{-1}\kleeneleq\Tr^{U}_{A',B'}((h^{-1}\otimes
  1_{U})\circ f'\circ(g^{-1}\otimes 1_{U})),
  \]
  and therefore
  \[\Tr^{U}_{A',B'}((h\otimes 1_{U})\circ f\circ(g\otimes 1_{U}))
  \kleeneleq
  h \circ \Tr^{U}_{A,B}(f)\circ g,
  \]
  where $f'=(h\otimes 1_{U})\circ f\circ(g\otimes 1_{U})$. Therefore,
  the naturality axiom holds with Kleene equality when $g$ and $h$ are
  isomorphisms.
\end{remark}

\begin{remark}
  The precondition to the vanishing II axiom is redundant for the
  left-to-right direction. In other words, we have the directed Kleene
  equality
  \[ \Tr^{U}_{A,B}(\Tr^{V}_{A\otimes U,B\otimes U}(f))\kleeneleq
  \Tr^{U\otimes V}_{A,B}(f)
  \]
  regardless of whether $\Tr^{V}_{A\otimes U,B\otimes U}(f)$ is
  defined or not. However, the assumption $\Tr^{V}_{A\otimes
    U,B\otimes U}(f)\defined$ is of course critical for the
  right-to-left direction.
\end{remark}

\begin{lemma}
  The strength axiom in the context of
  Definition~\ref{def-partial-trace} is equivalent to the following
  axiom (see also~\cite{JSV96}):\rm
  \begin{itemize}
  \item\textbf{Superposing:} For all $f:A\otimes U\rightarrow B\otimes U$
    and $g:C\rightarrow D$,
    \[ \Tr^{U}_{A,B}(f)\otimes g\kleeneleq  \Tr^{U}_{A\otimes
      C,B\otimes D}((1_B\otimes \sigma_{U,D})\circ (f\otimes g)\circ
    (1_A\otimes \sigma_{C,U})).
    \]
  \end{itemize}
\end{lemma}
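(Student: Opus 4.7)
My plan is to prove the equivalence by showing that strength and superposing differ from each other merely by conjugation with symmetries: strength stacks $g$ above $\Tr^U_{A,B}(f)$, while superposing stacks it below, and the two configurations are related by a permutation of the non-traced wires. Both composition with defined morphisms and (by Remark~\ref{rem-naturality}) naturality along isomorphisms preserve directed Kleene inequality, so conjugation by symmetries transports one axiom into the other.

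For the direction strength $\Rightarrow$ superposing, I would start from the strength inequality
\[
g\otimes \Tr^{U}_{A,B}(f)\;\kleeneleq\;\Tr^{U}_{C\otimes A,D\otimes B}(g\otimes f)
\]
and pre-/post-compose with the defined morphisms $\sigma_{D,B}$ and $\sigma_{A,C}$. On the left, naturality of the symmetry in the monoidal category yields
\[
\sigma_{D,B}\circ (g\otimes \Tr^U_{A,B}(f))\circ \sigma_{A,C} = \Tr^U_{A,B}(f)\otimes g,
\]
using $\sigma_{C,A}\circ\sigma_{A,C}=1$. On the right, Remark~\ref{rem-naturality} (Kleene equality of naturality along isomorphisms) pushes the symmetries inside the trace, giving
\[
\Tr^{U}_{A\otimes C,B\otimes D}\bigl((\sigma_{D,B}\otimes 1_U)\circ(g\otimes f)\circ(\sigma_{A,C}\otimes 1_U)\bigr).
\]
What remains is to verify the purely structural identity
\[
(\sigma_{D,B}\otimes 1_U)\circ(g\otimes f)\circ(\sigma_{A,C}\otimes 1_U) = (1_B\otimes \sigma_{U,D})\circ(f\otimes g)\circ(1_A\otimes \sigma_{C,U}),
\]
which holds by coherence for symmetric monoidal categories, since both sides are the unique morphism $A\otimes C\otimes U\to B\otimes D\otimes U$ that applies $g$ on the $C$-wire, applies $f$ on the $(A,U)$-wires, and outputs in the order $(B,D,U)$.

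For the converse, superposing $\Rightarrow$ strength, the argument is completely symmetric: conjugate the superposing inequality by $\sigma_{B,D}$ on the left and $\sigma_{C,A}$ on the right, simplify the left-hand side to $g\otimes \Tr^U_{A,B}(f)$ by naturality of $\sigma$, and simplify the right-hand side to $\Tr^{U}_{C\otimes A,D\otimes B}(g\otimes f)$ by Remark~\ref{rem-naturality} together with the same coherence identity as above.

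The only real obstacle is bookkeeping with the several symmetries and checking the coherence identity for the morphism inside the trace; this is most transparent in the graphical language of Section~\ref{ssec-graphical-total}, where the two diagrams are simply vertical reflections of each other, with the $C/D$ strand and the $A/B/U$ strands swapping top-bottom position.
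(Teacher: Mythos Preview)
Your proof is correct and follows essentially the same route as the paper: both establish the symmetric-monoidal identity $(1_B\otimes \sigma_{U,D})\circ(f\otimes g)\circ(1_A\otimes \sigma_{C,U}) = (\sigma\otimes 1_U)\circ(g\otimes f)\circ(\sigma\otimes 1_U)$, then use Remark~\ref{rem-naturality} to conjugate the traced side by symmetries with Kleene equality, and naturality of $\sigma$ to conjugate the untraced side. The paper packages this as ``both left-hand sides and both right-hand sides are related by conjugation with $\sigma$'', while you phrase it directionally, but the argument is the same.
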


\begin{proof}
  By the axioms of symmetric monoidal categories, we have $(1_B\otimes
  \sigma_{U,D})\circ (f\otimes g)\circ (1_A\otimes \sigma_{C,U}) =
  (\sigma\x\id_U)\circ(g\x f)\circ(\sigma\x\id_U)$. From this and
  Remark~\ref{rem-naturality}, it follows that the right-hand sides of
  the superposing and strength axioms are related by conjugation with
  $\sigma$:
  \begin{eqnarray*}
    \Tr^{U}_{A\otimes C,B\otimes D}((1_B\otimes \sigma_{U,D})\circ
    (f\otimes g)\circ (1_A\otimes \sigma_{C,U}))
    &\kleeneeq& \Tr^{U}_{A\otimes
      C,B\otimes D}((\sigma\x\id_U)\circ(g\x f)\circ(\sigma\x\id_U))
    \\
    &\kleeneeq&
    \sigma\circ \Tr^{U}_{C\otimes A,D\otimes B}(g\x f)\circ \sigma.
  \end{eqnarray*}
  On the other hand, the left-hand sides of the superposing and
  strength axioms are also related by conjugation with $\sigma$:
  \begin{eqnarray*}
    \Tr^{U}_{A,B}(f)\otimes g
    &\kleeneeq&
    \sigma\circ(g\x \Tr^{U}_{A,B}(f))\circ\sigma.
  \end{eqnarray*}
  The claim then follows.
\end{proof}

\subsection{Graphical language}\label{ssec-graphical-partial}

Because a morphism such as
\[ \vcenter{\wirechart{@C=.5cm@R=.5cm}{
          \blank\wireopen{d}\wire{rr}{}&&
          \blank\wireclose{d}
          \\
          \blank\wireright{r}{}&
          \blank\wireright{r}{}&
          \blank
          \\
          \wireid\wireright{r}{}&
          \blank\ulbox{[].[u]}{f}\wireright{r}{}&
          \wireid}}
\]
may be undefined in a partially traced category, we may not a priori
assume that the graphical language of
Section~\ref{ssec-graphical-total} is sound for partially traced
categories, or even that every diagram describes a unique morphism.
For example, both sides of the naturality axiom correspond, up to
isomorphism of diagrams, to the same diagram
\[ \vcenter{\wirechart{@C=.5cm@R=.5cm}{
          &\blank\wireopen{d}\wire{rr}{}&&
          \blank\wireclose{d}
          \\
          &\blank\wireright{r}{}&
          \blank\wireright{r}{}&
          \blank
          \\
          *{}\wireright{r}{}&\blank\ulbox{[]}{g}\wireright{r}{}&
          \blank\ulbox{[].[u]}{f}\wireright{r}{}&
          \blank\ulbox{[]}{h}\wireright{r}{}&
          *{~.}
          }}
\]
However, one side of the axiom may be undefined and the other defined,
so the diagram does not have a unique meaning.

Nevertheless, we wish to use graphical reasoning to simplify our
exposition, particularly in Section~\ref{sec-pint}. The following
standard trick will allow us to do this. Whenever we take the trace of
a composite diagram, we will draw a special box around the portion of
the diagram that is being traced, like this:
\[ \vcenter{\wirechart{@C=.5cm@R=.5cm}{
          \blank
          \wireopen{d}\wire{rrr}{}&&&
          \blank\wireclose{d}
          \\
          \blank\wireright{rr}{}&&
          \blank\wireright{r}{}&
          \blank
          \\
          *{}\wireright{rr}{}&&
          \blank\ulbox{[].[u]}{f}\wireright{r}{}&
          \blank\ulbox{[]}{h}\wireright{r}{}&
          *{}\\
          *{}\wireright{r}{}&\blank
          \dashbox{[].[uu].[r]}{!U+<3mm,\addheighthalf>!}
          \ulbox{[]}{g}\wireright{r}{}&
          \blank\ulbox{[]}{k}\wireright{rr}{}&&
          *{~.}\\
          }}
\]
Note that, since every partially traced category is a
symmetric monoidal (total) category, the graphical language of symmetric monoidal
categories is still {\em sound} for partially traced categories, and
therefore any symmetric monoidal portion of a graphical diagram can be
soundly manipulated up to graph isomorphism. This means that one can
soundly manipulate the inside of a box, as well as the box as a whole,
up to graph isomorphism. With this convention, any diagram (up to
box-respecting graph isomorphism) has a unique meaning (up to Kleene
equality) in a partially traced category.

\subsection{Examples: partial traces on $(\Vect,\oplus)$}

It is well-known that the category $\Vectfin$ of finite dimensional
vector spaces (over any field $k$), with the symmetric monoidal
structure given by the tensor product $\otimes$, is totally traced. In
fact, this category is compact closed.

On the other hand, with respect to the monoidal structure given by
the biproduct $\oplus$, neither $\Vect$ nor $\Vectfin$ is totally traced.
However, it is possible to define a partial trace on these
categories. In fact, this can be done in more than one way, as we will
now discuss.

Recall that in a category with biproducts, a morphism $f: A\oplus
U\rightarrow B\oplus U$ is characterized by the matrix $
\xmatrix{.5}{.5}{ll}{
  f_{11} & f_{12}\\
  f_{21} & f_{22}}, $ where $f_{ij}=\pi_{i} \comp f\comp
\inj_{j}$. Composition corresponds to multiplication of matrices. Also
recall that an {\em additive category} is a category with finite
biproducts and where each morphism $f:A\to B$ has an additive inverse
$g:A\to B$ such that $f+g=0$.

\subsubsection{Non-examples: Kleene trace and sum trace}

A first attempt to define a partial trace with respect to biproducts
on vector spaces is by summing over all paths in the graph
\[ \includegraphics[width=1.8in]{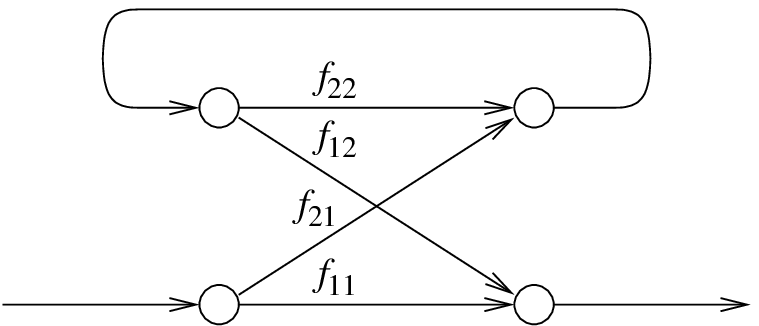}.
\]
We consider two variants:

\begin{definition}[Kleene trace {\cite{Plotkin03}}]\label{def-example-kleene}
  The {\em Kleene trace} is the following partial operation on
  $(\Vect,\oplus)$. For $f:A\oplus U\rightarrow B\oplus U$, define
  \begin{equation}\label{eqn-example-kleene}
    \TrK^U(f)=f_{11}+f_{12}(\sum_{n\geq 0}f_{22}^n)f_{21},\\
  \end{equation}
  if this sum exists, and $\TrK^U(f)$ is undefined otherwise.
\end{definition}

To give the summation an unambiguous meaning, let us assume here that
the vector spaces are over the real or complex numbers, and that
convergence is taken with respect to some convenient topology, such as
the weak operator topology, where $X_n\rightarrow X$ iff for all $v\in
A$ and $w\in B^*$, $wX_nv\rightarrow wXv$. We also consider:

\begin{definition}[Sum trace]\label{def-example-sum}
  On $(\Vect,\oplus)$, for $f:A\oplus U\rightarrow B\oplus U$, define
  the {\em sum trace}
  \begin{equation}\label{eqn-example-sum}
    \TrS^U(f)=f_{11}+\sum_{n\geq 0}(f_{12}\,f_{22}^n\,f_{21}),\\
  \end{equation}
  if this sum exists, and $\TrS^U(f)$ is undefined otherwise.
\end{definition}

\begin{proposition}
  Neither (\ref{eqn-example-kleene}) nor (\ref{eqn-example-sum}) is a
  partial trace in the sense of Definition~\ref{def-partial-trace}.
  Both operations satisfy naturality, dinaturality, strength,
  vanishing I, and yanking. However, both fail to satisfy vanishing
  II.
\end{proposition}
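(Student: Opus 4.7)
The plan is to exploit the matrix representation of morphisms in a biproduct category: each $f:A\oplus U\to B\oplus U$ is represented by a $2\times 2$ block matrix $[f_{ij}]$, with composition equal to matrix multiplication. Under this representation, each of the six partial trace axioms becomes a concrete statement about the formulas~(\ref{eqn-example-kleene}) and (\ref{eqn-example-sum}) under block operations and reindexed summations. Five of the axioms should follow by routine block-matrix identities and elementary properties of the geometric series; vanishing II, where the expected obstacle lies, will require a concrete counterexample.

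First I would verify naturality, strength, and vanishing I by observing that pre- and post-composition with $g\oplus 1_U$ or $h\oplus 1_U$, as well as the tensor with $g$, alters only the blocks $f_{11},f_{12},f_{21}$ of $f$ while leaving $f_{22}$ untouched. Hence the formal sum $\sum f_{22}^n$ appears identically on both sides, and convergence as well as the value commute with these ambient operations. Vanishing I collapses to the case $V=0$ where $f=f_{11}$, so both formulas return $f$ trivially. Yanking reduces to the observation that $\sigma_{A,A}$ has $f_{22}=0$, so $\sum f_{22}^n=1_A$. Dinaturality rests on the elementary identity $(g f_{22})^n g = g (f_{22} g)^n$ (induction on $n$) together with $\rho(gf_{22})=\rho(f_{22}g)$, which together guarantee that the two sides of the axiom converge under the same conditions and, when they do, to the same value.

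The main obstacle is vanishing II, where the plan is to construct $f$ so that the iterated trace over $U$ and then $V$ is defined while the one-step trace over $U\oplus V$ is not. I would take $A=B=U=V=k$ and try
\[
f=\begin{pmatrix} 0 & 1 & 0 \\ 1 & 0 & 1 \\ 0 & 1 & -\tfrac12 \end{pmatrix}
\]
partitioned as $A\oplus U\oplus V$. The inner trace over $V$ needs $\sum (-\tfrac12)^n=\tfrac23$, which converges and produces a matrix whose $(U,U)$-block equals $\tfrac23$. The subsequent outer trace over $U$ needs $\sum (\tfrac23)^n=3$, which also converges, so both iterated traces (Kleene and sum) are defined with common value $3$.

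On the other hand, $\TrK^{U\oplus V}(f)$ and $\TrS^{U\oplus V}(f)$ require summability of $M^n$ or of the entry $(M^n)_{11}$ for $M=\begin{pmatrix} 0 & 1\\ 1 & -\tfrac12\end{pmatrix}$. Since $\det M=-1$, the two eigenvalues of $M$ have product $-1$, so one eigenvalue $\lambda_2$ lies strictly outside the unit disc. Therefore $M^n$ does not tend to $0$ in any reasonable operator topology, and the explicit formula $(M^n)_{11}=(\lambda_2 \lambda_1^n - \lambda_1 \lambda_2^n)/(\lambda_2-\lambda_1)$ shows that the entrywise sum also diverges, because the coefficient of $\lambda_2^n$ is nonzero (as $\lambda_1\neq 0$). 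Hence both $\TrK^{U\oplus V}(f)$ and $\TrS^{U\oplus V}(f)$ are undefined while the iterated traces are defined, violating vanishing II. The same matrix serves both variants because the relevant inner sums are scalar-valued, so the reordering that distinguishes Kleene trace from sum trace is invisible on this example.
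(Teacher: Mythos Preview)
Your proposal is correct and follows essentially the same approach as the paper: the five positive axioms are dispatched via block-matrix identities, and vanishing II is refuted by a concrete $3\times 3$ example with $A=B=U=V=k$ where the inner $V$-trace and outer $U$-trace each converge (scalar geometric series with ratio of modulus $<1$), while the one-step trace over $U\oplus V$ requires summing powers of a $2\times 2$ block with spectral radius $>1$. The only difference is the choice of matrix---the paper uses $\left(\begin{smallmatrix}1&1&0\\1&-2&1\\0&1&1/2\end{smallmatrix}\right)$ rather than your $\left(\begin{smallmatrix}0&1&0\\1&0&1\\0&1&-1/2\end{smallmatrix}\right)$---but the mechanism is identical, and your eigenvalue argument for the divergence of $(M^n)_{11}$ is a nice explicit justification that the paper omits.
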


\begin{proof}
  Naturality, dinaturality, strength, vanishing I, and yanking are
  straightforward to check. To show that the sum trace does not
  satisfy vanishing II, consider $A=B=U=V=k$ and $f:A\oplus U\oplus
  V\to B\oplus U\oplus V$ given by
  \[ \zmatrix{ccc}{
      1 & 1 & 0 \\
      1 & -2 & 1 \\
      0 & 1 & 1/2}.
  \]
  Then
  \[ \TrS^V f = \zmatrix{cc}{1 & 1 \\ 1 & -2}
  + \sum_{n\geq 0} \zmatrix{c}{0 \\ 1} \left(\frac{1}{2}\right)^n
  \zmatrix{cc}{0 & 1}
  = \zmatrix{cc}{1 & 1\\ 1 & 0}.
  \]
  In particular, this sum exists, so the hypothesis of vanishing II is
  satisfied. Now $\TrS^U \TrS^V f$ exists and is equal to
  \[ \TrS^U \TrS^V f = 1 + \sum_{m\geq 0} 1\cdot 0^m\cdot 1 = 2.
  \]
  On the other hand,
  \[ \TrS^{U\oplus V} f = 1 + \sum_{n\geq 0} \zmatrix{cc}{1 & 0}
  \zmatrix{cc}{-2 & 1 \\ 1 & 1/2}^n \zmatrix{c}{1 \\ 0},
  \]
  which does not converge, contradicting the vanishing II axiom. The
  same counterexample also applies to the Kleene trace.
\end{proof}

\subsubsection{Haghverdi and Scott's partial trace on $(\Vect,\oplus)$}

One of the motivating examples of a partially traced category in
{\cite{HS05a,HS06,HS09}} is the following partial trace on
$(\Vect,\oplus)$. It can be seen as an effort to make the Kleene trace
(\ref{eqn-example-kleene}) more often defined by replacing the sum
$\sum_{n\geq 0}f_{22}^n$ by its limit $(I-f_{22})^{-1}$. The following
definition makes sense in finite or infinite dimensions and over any
base field, or indeed in any additive category.

\begin{definition}[Haghverdi-Scott trace {\cite{HS05a}}]\label{def-hs-trace}
  \rm On $(\Vect,\oplus,\textbf{0})$, or on any additive category, we
  define a partial trace as follows: for $f:A\oplus U\rightarrow
  B\oplus U$, let
  \begin{equation}\label{eqn-hs-trace}
    \TrHS^U(f)=f_{11}+f_{12}(I-f_{22})^{-1}f_{21},
  \end{equation}
  if $I-f_{22}$ is invertible, and $\TrHS^U(f)$ is undefined
  otherwise. Here, $I=\id:U\rightarrow U$ is the identity map on $U$.
\end{definition}

\begin{proposition}
  The Haghverdi-Scott trace is a partial trace.
\end{proposition}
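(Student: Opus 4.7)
The plan is to work entirely in matrix form using the biproduct. Any morphism $f:A\oplus U\to B\oplus U$ is represented by a $2\times 2$ matrix of components $f_{ij}=\pi_i\circ f\circ \inj_j$, composition is matrix multiplication, and the formula (\ref{eqn-hs-trace}) depends only on these components. Each axiom is then a direct block-matrix calculation, and for the three axioms involving the trace of a composite I would first identify the $U\to U$ component of the composite in order to determine when the trace is defined.

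I would dispatch the easy axioms first. For \textbf{vanishing I} the unit for $\oplus$ is the zero object, so $f_{22}=0$ and $I-f_{22}$ is trivially invertible, leaving $\TrHS^I(f)=f_{11}=f$. For \textbf{yanking}, $\sigma_{A,A}$ has matrix $\bigl(\begin{smallmatrix}0&I\\I&0\end{smallmatrix}\bigr)$, so $(I-f_{22})^{-1}=I$ and the formula collapses to $\id_A$. For \textbf{naturality} and \textbf{strength}, the relevant compositions (pre/post-composing with $g$, $h$, or taking $g\oplus f$) leave the $(2,2)$-block $f_{22}$ unchanged, so definedness is preserved in both directions, and expanding the formula shows the traces factor as $h\circ\TrHS^U(f)\circ g$ and $g\oplus\TrHS^U(f)$ respectively.

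For \textbf{dinaturality}, given $f:A\oplus U\to B\oplus U'$ and $g:U'\to U$, the $(2,2)$-blocks of $(1_B\oplus g)\circ f$ and $f\circ(1_A\oplus g)$ are $gf_{22}$ and $f_{22}g$. The key observation is the symmetric invertibility lemma: $I-gf_{22}$ is invertible if and only if $I-f_{22}g$ is, with the explicit formula $(I-gf_{22})^{-1}=I+g(I-f_{22}g)^{-1}f_{22}$. This follows from the identity $(I-gf_{22})g=g(I-f_{22}g)$, which also immediately yields $(I-gf_{22})^{-1}g=g(I-f_{22}g)^{-1}$. Substituting into the two instances of (\ref{eqn-hs-trace}) gives Kleene equality.

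The main obstacle, and the technical heart of the proof, is \textbf{vanishing II}. Writing $f:A\oplus U\oplus V\to B\oplus U\oplus V$ as a $3\times 3$ block matrix with entries $f_{ij}$ for $i,j\in\{1,2,3\}$, the hypothesis $\TrHS^V(f)\defined$ is exactly that $I-f_{33}$ is invertible. The resulting partial trace has $(2,2)$-block $f_{22}+f_{23}(I-f_{33})^{-1}f_{32}$, so $\TrHS^U(\TrHS^V(f))$ is defined iff the Schur complement $S:=(I-f_{22})-f_{23}(I-f_{33})^{-1}f_{32}$ is invertible. On the other hand, $\TrHS^{U\oplus V}(f)$ is defined iff the block matrix
\[
M=\begin{pmatrix} I-f_{22} & -f_{23}\\ -f_{32} & I-f_{33}\end{pmatrix}
\]
is invertible, and by the Schur complement theorem (valid in any additive category once $I-f_{33}$ is invertible) $M$ is invertible iff $S$ is, so the two sides have matching definedness conditions. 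The same Schur complement formula produces an explicit expression for $M^{-1}$ whose four blocks, when assembled via (\ref{eqn-hs-trace}), coincide block-by-block with the iterated trace $\TrHS^U(\TrHS^V(f))$ after a short multiplication; this last bookkeeping step, reducing $(1,1)$-block of $\TrHS^{U\oplus V}(f)$ to $f_{11}+(f_{12}\ f_{13})M^{-1}(f_{21}\ f_{31})^{t}$ and recognising it as $\TrHS^U$ applied to the $2\times 2$ partial trace, is the only calculation requiring care.
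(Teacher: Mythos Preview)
Your proof sketch is correct. The paper itself does not give a proof of this proposition; it merely cites the original references \cite{HS05a,HS06,HS09}, where the Haghverdi-Scott trace was introduced and the axioms verified. What you have written is essentially the standard verification found in those sources: the easy axioms are immediate once one notes that the relevant compositions leave the $(2,2)$-block unchanged (naturality, strength) or make it zero (vanishing~I, yanking); dinaturality reduces to the Sylvester-type identity $(I-gf_{22})^{-1}g=g(I-f_{22}g)^{-1}$; and vanishing~II is exactly the Schur complement criterion for invertibility of a $2\times 2$ block matrix with one invertible diagonal block.

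One small point of care worth making explicit in your write-up: the naturality and strength axioms only require the \emph{directed} Kleene inequality $\kleeneleq$, so it is enough that definedness propagates forward; you observe correctly that it in fact propagates both ways here, but the axiom does not need that. Otherwise the argument is complete.
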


\begin{proof}~\cite{HS05a,HS06,HS09}.
\end{proof}

\begin{remark}
  Both the sum trace and the Haghverdi-Scott trace are strictly more
  defined than the Kleene trace, in the sense that for all $f$,
  $\TrK^U(f)\kleeneleq \TrS^U(f)$ and $\TrK^U(f)\kleeneleq
  \TrHS^U(f)$. Moreover, it appears that when the sum trace and the
  Haghverdi-Scott trace are both defined, then they
  coincide\footnote{We know this for certain only in the finite
    dimensional case.}. However, the sum trace and the Haghverdi-Scott
  trace can each be defined without the other being defined. For
  example, for $f=\xmatrix{.5}{.5}{cc}{1&0\\0&1}$, the sum trace is
  defined but the Haghverdi-Scott trace is not, whereas for
  $f=\xmatrix{.5}{.5}{cc}{1&1\\1&2}$, the Haghverdi-Scott trace is
  defined and the sum trace is not. In fact, as the following
  proposition shows, there is no partial trace (and hence definitely
  no total trace) on $(\Vect,\oplus,\textbf{0})$ that simultaneously
  generalizes the sum trace and the Haghverdi-Scott trace.
\end{remark}

\begin{proposition}\label{prop-trace-paradox}
  There exists no partial trace $\Tr$ on $(\Vect,\oplus)$, such
  that for all $f:A\oplus U\rightarrow B\oplus U$,
  \[ \TrS^U(f) \kleeneleq \Tr^U(f)
  \quad\mbox{and}\quad
  \TrHS^U(f) \kleeneleq \Tr^U(f).
  \]
\end{proposition}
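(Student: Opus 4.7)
I would prove this by contradiction. Suppose $\Tr$ is a partial trace on $(\Vect,\oplus)$ extending both $\TrS$ and $\TrHS$. The aim is to exhibit a single morphism $f:A\oplus U\oplus V\to B\oplus U\oplus V$ on which the axioms of $\Tr$ are forced to be inconsistent.

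Specifically, I would look for an $f$ such that $\TrS^V(f)$ is defined with value $g:A\oplus U\to B\oplus U$ (so that $\Tr^V(f)=g$); $\Tr^U(g)$ is forced by one of the two extensions to equal some scalar $x$; and $\TrHS^{U\oplus V}(f)$ is defined with some value $y\neq x$ (so that $\Tr^{U\oplus V}(f)=y$). The vanishing II axiom for $\Tr$ then gives $x\kleeneeq\Tr^U(\Tr^V(f))\kleeneeq\Tr^{U\oplus V}(f)=y$, contradicting $x\neq y$.

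The main obstacle is constructing such a witness $f$. The natural starting point is the $3\times 3$ matrix from the proof of the previous proposition, for which $\TrS$ already fails vanishing II; however, a direct computation for that matrix shows $\TrHS^{U\oplus V}=2$ equals the iterated sum trace $\TrS^U(\TrS^V(f))=2$, so no mismatch arises. More generally, a case analysis in finite dimensions suggests that whenever $\TrS^V(f)$ is defined, one of two things happens: either $\TrHS^V(f)$ is also defined (in which case the two traces agree and the HS trace's own vanishing II propagates agreement to the outer level), or the kernel of $I-f_{22,V}$ responsible for $\TrHS^V(f)$ being undefined propagates into a zero row or column of $I-f_{22,\overline{A}}$, leaving $\TrHS^{U\oplus V}(f)$ undefined as well. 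To circumvent this, I would exploit the fact that $\Vect$ contains infinite-dimensional spaces: take $V$ infinite-dimensional and let $j:=f_{22,V}$ be a shift-type operator for which $I-j$ is injective but not surjective (so $\TrHS^V(f)$ is undefined in $\Vect$), while arranging $j$ to act nilpotently on the image of $f_{21,V}$ so that $\sum_n f_{12,V}\,j^n\,f_{21,V}$ collapses to finitely many nonzero terms and therefore converges. With $U$ together with the cross-blocks $f_{UV},f_{VU}$ chosen so that $I-f_{22,\overline{A}}$ becomes a genuine bijection on $U\oplus V$, the outer HS trace $\TrHS^{U\oplus V}(f)$ exists, and a suitable tuning of the remaining scalar data produces $y\neq x$.

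The hardest step is to verify all these conditions simultaneously: the convergence of the inner sum trace in $\Vect$, the algebraic invertibility (not mere injectivity) of $I-f_{22,\overline{A}}$, and the genuine disagreement of the two numerical values. Once such a witness $f$ is in hand, the conflict with vanishing II is immediate and closes the argument.
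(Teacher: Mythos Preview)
Your proposal has a genuine gap: you never actually construct the witness $f$. The entire argument is conditional on the existence of a morphism satisfying a long list of constraints (convergence of the inner sum trace, non-injectivity or non-surjectivity of $I-j$ on $V$ but algebraic invertibility of the full $I-f_{22}$ on $U\oplus V$, and a numerical mismatch $y\neq x$), and you explicitly concede that verifying these simultaneously is ``the hardest step'' without doing it. A strategy sketch is not a proof, and here the burden lies precisely in exhibiting such an $f$; your own case analysis already shows why naive finite-dimensional attempts fail, and the infinite-dimensional shift-operator idea is left entirely unverified.

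More importantly, you are missing the key idea that makes the proof short and finite-dimensional: use \emph{dinaturality}, not just vanishing~II. The paper takes $A=B=U=V=k$ and two explicit $3\times 3$ matrices $f$ and $g=(\id\oplus X)\,f\,(\id\oplus X^{-1})$ related by conjugation on $U\oplus V$. One computes directly that $\TrS^U\TrHS^U(f)=1$ and $\TrS^U\TrHS^U(g)=0$, so the hypothetical $\Tr$ must satisfy $\Tr^U\Tr^U(f)=1$ and $\Tr^U\Tr^U(g)=0$; vanishing~II then gives $\Tr^{U\oplus U}(f)=1$ and $\Tr^{U\oplus U}(g)=0$. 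But dinaturality forces $\Tr^{U\oplus U}(f)\kleeneeq\Tr^{U\oplus U}(g)$, a contradiction. Note in particular that the paper never needs $\TrHS^{U\oplus V}(f)$ or $\TrS^{U\oplus V}(f)$ to be defined --- it extracts both values of $\Tr^{U\oplus U}$ from the iterated traces via vanishing~II, and the contradiction comes from comparing two different morphisms, not two evaluations of the same one.
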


\begin{proof}
  Suppose there is such a partial trace $\Tr$. Let $A=B=U=k$,
  $X=\xmatrix{.5}{.5}{cc}{0&1\\1&0}$ and consider $f,g:A\oplus U\oplus
  U\to B\oplus U\oplus U$ given by
  \[ f = \zmatrix{ccc}{0& 1& 1 \\ 0 & 2 & 1 \\ 1 & -1 & 0}
  \quad\mbox{and}\quad
  g = (\id\oplus X)f(\id\oplus X^{-1})
  = \zmatrix{ccc}{0& 1& 1 \\  1 & 0& -1 \\0 & 1& 2}.
  \]
  By direct calculation, one can verify that both $\TrS^U\TrHS^U(f)$
  and $\TrS^U\TrHS^U(g)$ are defined and
  \[ \TrS^U\TrHS^U(f) = 1 \quad\mbox{and}\quad \TrS^U\TrHS^U(g) = 0.
  \]
  By assumption,
  \[ \Tr^U\Tr^U(f) = 1 \quad\mbox{and}\quad \Tr^U\Tr^U(g) = 0,
  \]
  hence by vanishing II,
  \[ \Tr^{U\oplus U}(f) = 1 \quad\mbox{and}\quad \Tr^{U\oplus U}(g) = 0.
  \]
  On the other hand, dinaturality requires $\Tr^{U\oplus
    U}(f)\kleeneeq \Tr^{U\oplus U}(g)$, a contradiction.

\end{proof}

\subsubsection{The kernel-image partial trace on $(\Vect,\oplus)$}
\label{sssec-ki-trace}

The following definition generalizes the Haghverdi-Scott partial
trace, and is defined on slightly more morphisms.

\begin{definition}[Kernel-image trace]\label{def-kernel-image}
  We define another partial trace on $(\Vect,\oplus)$, or indeed on
  any additive category, called the {\em kernel-image trace}. Given a
  map $f:A\oplus U\rightarrow B\oplus U$, we say $\TrKI^U(f)\,\defined$
  iff there exist morphisms $i:A\to U$ and $k:U\to B$ such that the
  following commutes:
  \[ \xymatrix@C+5ex{
    A \ar@{-->}[r]^{i} \ar[dr]_{f_{21}} &
    U \ar[dr]^{f_{12}}\ar[d]_<>(.3){I-f_{22}}\\
    & U \ar@{-->}[r]^<>(.5){k}
    & B. \\
  }
  \]
  Whenever this condition is satisfied we define
  \begin{equation}
    \TrKI^{U}(f) = f_{11} + k\circ f_{21} = f_{11} + f_{12} \circ i : A\to B.
  \end{equation}
  To show that this is well-defined, note that $k\circ f_{21}$ does
  not depend on $i$ and $f_{12} \circ i$ does not depend on $k$, so
  $\TrKI^{U}(f)$ is independent of the choice of both $i$ and $k$.
\end{definition}

\begin{remark}
  In $\Vect$, the existence of $i$ and $k$ is equivalent to the
  following two conditions, respectively:
  \begin{eqnarray*}
    &&\im f_{21}\subseteq \im (I-f_{22}),\\
    &&\ker(I-f_{22})\subseteq \ker f_{12}.
  \end{eqnarray*}
  This explains the name ``kernel-image trace''.
\end{remark}

\begin{proposition}
  The kernel-image trace is a partial trace.
\end{proposition}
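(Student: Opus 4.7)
The plan is to verify the six axioms of Definition 3.2 one by one by exhibiting, for each trace that appears in an axiom, explicit witnesses $i$ and $k$ satisfying the kernel/image factorization conditions, and then computing the trace values on both sides.

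I would dispatch the easy axioms first. Vanishing I (where $U$ is the zero object, so $i$ and $k$ are vacuously zero and $\TrKI^{\mathbf{0}}(f) = f_{11} = f$) and yanking (where $f_{22} = 0$, so $i = f_{21} = 1_A$ and $k = f_{12} = 1_A$ witness the trace, giving $\TrKI^A(\sigma_{A,A}) = 0 + 1_A \circ 1_A = 1_A$) are essentially immediate. For naturality, given witnesses $i, k$ for $\TrKI^U(f)$, I would verify that $i \circ g$ and $h \circ k$ witness $\TrKI^U((h \oplus 1_U) \circ f \circ (g \oplus 1_U))$ and compute its value to be $h \circ \TrKI^U(f) \circ g$, yielding the required directed Kleene inequality. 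For strength with $g : C \to D$, extending $i$ and $k$ by zero on the $C$ and $D$ summands produces witnesses for $\TrKI^U(g \oplus f)$ whose value is $g \oplus \TrKI^U(f)$.

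For dinaturality, with $f : A \oplus U \to B \oplus U'$ and $g : U' \to U$, the argument is symmetric in both directions. From witnesses $i, k$ for $\TrKI^U((1_B \oplus g) \circ f)$ (satisfying $(I_U - g f_{22}) i = g f_{21}$ and $k(I_U - g f_{22}) = f_{12}$), I would define $i' = f_{22} i + f_{21}$ and $k' = k g$, and verify by direct substitution that these witness $\TrKI^{U'}(f \circ (1_A \oplus g))$; conversely I would take $i = g i'$ and $k = f_{12} + k' f_{22}$. The trace values agree because the witness equation $i = g f_{22} i + g f_{21}$ gives $f_{12} i = f_{12} g (f_{22} i + f_{21}) = (f_{12} g) i'$.

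The main obstacle is vanishing II, which needs a careful block-matrix analysis. Writing $f : A \oplus U \oplus V \to B \oplus U \oplus V$ as a $3 \times 3$ block matrix $(f_{mn})$, the hypothesis $\TrKI^V_{A \oplus U, B \oplus U}(f) \defined$ yields witnesses $i_1 : A \to V$, $i_2 : U \to V$, $k_1 : V \to B$, $k_2 : V \to U$ satisfying $(I - f_{33}) i_n = f_{3n}$ and $k_n(I - f_{33}) = f_{n3}$ for $n \in \{1, 2\}$. In the forward direction, I would show that witnesses $(j, l)$ for the outer trace $\TrKI^U(\TrKI^V(f))$ produce witnesses $(j', j'') = (j,\, i_1 + i_2 j)$ and $(l', l'') = (l,\, k_1 + l k_2)$ for $\TrKI^{U \oplus V}(f)$, which is a direct check of the four block equations. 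The more subtle reverse direction takes $j = j'$ and $l = l'$ as candidate witnesses for $\TrKI^U(\TrKI^V(f))$; the required equations reduce to identities such as $f_{23}(j'' - i_1 - i_2 j') = 0$, which follow because $f_{23} = k_2(I - f_{33})$ and because $(I - f_{33})(j'' - i_1 - i_2 j') = 0$ (derivable from the block equations for $(j', j'')$ together with the factorizations of $f_{31}, f_{32}$ through $I - f_{33}$). A final substitution confirms that both iterated traces evaluate to $f_{11} + f_{12} j' + f_{13}(i_1 + i_2 j')$, completing the verification.
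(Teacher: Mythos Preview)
Your proposal is correct and follows essentially the same approach as the paper's proof. The witnesses you construct for each axiom coincide with the paper's (modulo notation: your $i_1,i_2,k_1,k_2$ are the paper's $E,F,G,H$, and your $(j',j'')=(j,\,i_1+i_2 j)$ is the paper's $(J,K)=(i',\,E+Fi')$); your reverse direction of vanishing~II, isolating the identity $(I-f_{33})(j''-i_1-i_2 j')=0$ and then using $f_{n3}=k_n(I-f_{33})$, is exactly the computation the paper carries out in expanded algebraic form.
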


\begin{proof}
  The proof for $\Vect$ can be found in~\cite{THESIS-OCTAVIO}. Here,
  we prove it in the case of a general additive category.  Let us say
  that $(k,i)$ {\em witnesses} the existence of $\Tr(f)$ if the
  condition of Definition~\ref{def-kernel-image} holds, i.e., $f_{12}
  = k\circ(I-f_{22})$ and $f_{21} = (I-f_{22})\circ i$. In this case,
  we also write $(k,i)\witness\Tr(f)$.

  \begin{itemize}
  \item To prove naturality, assume $(k,i)\witness\Tr(f)$. Then
  $(h\circ k,i\circ g)\witness\Tr((h\oplus\id_U)\circ
  f\circ(g\oplus\id_U))$, and $\Tr((h\oplus\id_U)\circ
  f\circ(g\oplus\id_U)) = h\circ f_{11}\circ g + h\circ k\circ
  f_{21}\circ g = h\circ\Tr(f)\circ g$.
  \[ \xymatrix@C+5ex{
    A'\ar[r]^{g} & A \ar@{-->}[r]^{i} \ar[dr]_{f_{21}} &
    U \ar[dr]^{f_{12}}\ar[d]_<>(.3){I-f_{22}}\\
    && U \ar@{-->}[r]^<>(.5){k}
    & B \ar[r]^{h} & B'. \\
  }
  \]

  \item To prove dinaturality, assume $(k,i)\witness\Tr((1_{B}\oplus
  g)\circ f)$:
  \[ \xymatrix@C+5ex{
    A \ar@{-->}[rr]^{i} \ar[dr]_{f_{21}} &&
    U \ar[dr]^{f_{12}}\ar[d]_<>(.3){I-g\circ f_{22}}\\
    & U' \ar[r]^{g}& U \ar@{-->}[r]^<>(.5){k}
    & B.\\
  }
  \]
  Let $j=f_{21}+f_{22}\circ i$ and note that $(I-g\circ f_{22})\circ
  i=g\circ f_{21}$ implies $i-g\circ f_{22}\circ i=g\circ f_{21}$,
  hence $i=g\circ(f_{21}+f_{22}\circ i) = g\circ j$. Consider the
  diagram
  \[ \xymatrix@C+5ex{
    A \ar@{-->}[r]^{j} \ar[dr]_{f_{21}} &
    U'\ar[r]^{g}\ar[d]_<>(.3){I-f_{22}\circ g}&
    U \ar[dr]^{f_{12}}\ar[d]_<>(.3){I-g\circ f_{22}}\\
    & U' \ar[r]^{g}& U \ar@{-->}[r]^<>(.5){k}
    & B.\\
  }
  \]
  The left triangle commutes because $(I-f_{22}\circ g)\circ j =
  j-f_{22}\circ g\circ j = j-f_{22}\circ i = f_{21}$. The center
  square commutes because both sides are equal to $g-g\circ f_{22}\circ
  g$. Therefore $(k\circ g,j)\witness\Tr(f\circ(1_{A}\oplus g))$ and
  $\Tr(f\circ(1_{A}\oplus g)) = f_{11} + k\circ g\circ f_{21} =
  \Tr((1_{B}\oplus g)\circ f)$. This proves one direction of
  dinaturality; the opposite direction is dual.

  \item To prove strength, assume $(k,i)\witness\Tr(f)$. Then
  $(\inj_2\circ k,i\circ\pi_2)\witness\Tr(g\oplus f)$ and $\Tr(g\oplus
  f) = (g\oplus f_{11}) + \inj_2\circ k\circ f_{21}\circ\pi_2 =
  g\oplus \Tr(f)$.
  \[ \xymatrix@C+5ex{
    C\oplus A\ar[r]^{\pi_2} & A \ar@{-->}[r]^{i} \ar[dr]_{f_{21}} &
    U \ar[dr]^{f_{12}}\ar[d]_<>(.3){I-f_{22}}\\
    && U \ar@{-->}[r]^<>(.5){k}
    & B \ar[r]^{\inj_2} & B\oplus D. \\
  }
  \]

  \item To prove yanking, notice that $(1,1)\witness\Tr(\sigma_U)$, and
  $\Tr(\sigma_U) = 0 + 1 = 1$.
  \[ \xymatrix@C+5ex{
    U \ar@{-->}[r]^{1} \ar[dr]_{1} &
    U \ar[dr]^{1}\ar[d]_<>(.3){I-0}\\
    & U \ar@{-->}[r]^<>(.5){1}
    & U.\\
  }
  \]

\item To prove vanishing I, consider $f:A\oplus 0\to B\oplus 0$. Then
  $(0,0)\witness\Tr(f)$ and, writing as if the monoidal structure were
  strict, we have $\Tr(f)=f_{11}+0 = f$.
  \[ \xymatrix@C+5ex{
    A \ar@{-->}[r]^{0} \ar[dr]_{0} &
    U \ar[dr]^{0}\ar[d]_<>(.3){I-0}\\
    & U \ar@{-->}[r]^<>(.5){0}
    & B.\\
  }
  \]

  \item Finally, to prove vanishing II, consider
  \[
  f=\xmatrix{.7}{.5}{ccc}{L&M&N\\P&A&B\\Q&C&D}:A\oplus U\oplus V\to
  B\oplus U\oplus V.
  \]
  Assume $\Tr^V(f)$ is defined and witnessed by some $(k,i)$. We write
  $i=\xmatrix{.5}{.5}{cc}{E&F}$ and $k=\xmatrix{.5}{.5}{c}{G\\H}$.
  \[ \xymatrix@C+5ex@R+1ex{
    A\oplus U \ar@{-->}[r]^{\xxmatrix{.5}{.5}{cc}{~E&~F}} \ar[dr]_{\xxmatrix{.5}{.5}{cc}{~Q&~C}} &
    V \ar[dr]^{\xxmatrix{.5}{.5}{c}{~N\\~B}}\ar[d]_<>(.3){~I-D}\\
    & V \ar@{-->}[r]^<>(.3){\xxmatrix{.5}{.5}{c}{~G\\~H}}
    & B\oplus U.\\
  }
  \]
  For greater brevity, let us write $D'=I-D$ and $A'=I-A$. We
  have $HD'=B$, $GD'=N$, $D'F=C$, and $D'E=Q$. Also,
  \[ \Tr^V(f) = \xmatrix{.5}{.5}{cc}{L+GQ & M+GC\\ P+HQ & A+HC} =
  \xmatrix{.5}{.5}{cc}{L+NE & M+NF\\ P+BE & A+BF}.
  \]
  What we must show is that some $(k',i')$ witnesses $\Tr^U\Tr^V(f)$
  if and only if some $(k'',i'')$ witnesses $\Tr^{U\oplus V}(f)$, and in
  this case, $\Tr^U\Tr^V(f)=\Tr^{U\oplus V}(f)$. Let us write
  $k''=\xmatrix{.5}{.5}{cc}{R&S}$ and $i''=\xmatrix{.5}{.5}{c}{J\\K}$,
  and consider the corresponding diagrams
  \[ \xymatrix@C+7ex@R+3ex{
    A \ar@{-->}[r]^{i'} \ar[dr]_{P+BE}
    \ar@{}[drr]_<>(.4){(a)}
    \ar@{}[drr]^<>(.6){(b)}&
    U \ar[dr]^{M+NF}\ar[d]_<>(.2){I-A-BF}\\
    & U \ar@{-->}[r]^<>(.3){k'}
    & B,\\
  }
 \xymatrix@C+7ex@R+3ex{
    A \ar@{-->}[r]^{\xxmatrix{.5}{.5}{c}{~J\\~K}}
    \ar[dr]_{\xxmatrix{.5}{.5}{c}{~P\\~Q}}
    \ar@{}[drr]_<>(.4){(c)}
    \ar@{}[drr]^<>(.6){(d)}&
    U\oplus V \ar[dr]^{\xxmatrix{.5}{.5}{cc}{~M&~N}}\ar[d]_<>(.2){I-\xxmatrix{.5}{.5}{cc}{~A&~B\\~C&~D}}\\
    & U\oplus V \ar@{-->}[r]^<>(.3){\xxmatrix{.5}{.5}{cc}{~R&~S}}
    & B.\\
  }
  \]
  We note that (a) commutes iff $P+BE=A'i'-BFi'$ iff $P=A'i' -
  B(E+Fi')$, and (c) commutes iff $P=A'J-BK$ and $Q=D'K-CJ$. Now given
  $i'$ such that (a) commutes, we can set $J=i'$ and $K=E+Fi'$. Then
  $P=A'i' - B(E+Fi') = A'J-BK$, and $Q=D'E=D'(K-Fi')=D'K-D'FJ=D'K-CJ$,
  and therefore (c) commutes. Conversely, given $J$ and $K$ such that
  (c) commutes, we can set $i'=J$, and we have: $P=A'J-BK = A'i'-HD'K
  = A'i'-H(Q+CJ) = A'i'-HD'E-HD'FJ = A'i'-HD'(E+Fi') = A'i' -
  B(E+Fi')$, and therefore (a) commutes. The proof for (b) and (d) is
  dual. Finally, if both diagrams are witnessed, with $J=i'$ and
  $K=E+Fi'$, then we have $\Tr^U\Tr^V(f) = L+NE+MJ+NFJ$ and
  $\Tr^{U\oplus V}(f)=L+MJ+NK$; the two are equal because $NE+NFJ =
  N(E+Fi') = NK$.
\end{itemize}
\end{proof}

\begin{remark}
  Notice that the kernel-image partial trace generalizes the
  Haghverdi-Scott trace. Indeed, if $I-f_{22}$ is invertible, then one
  can take $i=(I-f_{22})^{-1}\circ f_{21}$ and $k=f_{12}\circ
  (I-f_{22})^{-1}$, in which case $\TrKI^U(f) = f_{11} + f_{12}
  (I-f_{22})^{-1} f_{21}$. Therefore $\TrHS^U(f)\kleeneleq\TrKI^U(f)$.
  Moreover, the kernel-image trace is strictly more general, because
  for the identity map $f=\xmatrix{.5}{.5}{cc}{1&0\\0&1}$, the
  kernel-image trace is defined but the Haghverdi-Scott trace is
  not. However, although the kernel-image trace is more defined than
  the Haghverdi-Scott trace, because of
  Proposition~\ref{prop-trace-paradox}, it still does not subsume the
  sum trace. For example, for $f=\xmatrix{.5}{.5}{cc}{0&1\\0&1}$, the
  sum trace is defined and the kernel-image trace is not.
\end{remark}

\begin{remark}
  Let $U=V\oplus W$ be a finite dimensional Hilbert space and consider
  a hermitian positive operator $A:U\to U$. Then $A$ is characterized
  by its unit ball $\ball = \s{u\in U\mid \iprod{u,Au}\leq 1}$. Let
  $\ball'\seq V$ be the orthogonal projection of $\ball$ to the
  subspace $V$. Then $\ball'$ is the unit ball of a hermitian positive
  operator $A':V\to V$, which can be explicitly defined by
  $\iprod{v,A'v}:= \min\s{\iprod{v+w,A(v+w)}\mid w\in W}$. This
  construction is intimately related to the kernel-image trace in the
  following way: If $A$ is positive, then $\TrKI^W(I-A)$ always exists
  and is equal to $I-A'$. Such a property fails to hold for the
  sum-trace (e.g., $A=\xmatrix{.5}{.5}{cc}{1&1\\1&2}$) and the
  Haghverdi-Scott trace (e.g., $A=\xmatrix{.5}{.5}{cc}{1&0\\0&0}$).
\end{remark}

\subsection{Partial trace in a symmetric monoidal subcategory of a partially traced category}
\label{ssec-trace-subcategory}

The aim of this section is to show that any symmetric monoidal subcategory of a
partially (or totally) traced category is partially traced.  Suppose
$(\cD,\otimes,I,\sigma,\Tr)$ is a partially traced category with trace
\[ \Tr^{U}_{A,B}:\cD(A\otimes U,B\otimes U)\rightharpoonup \cD(A,B).
\]
Given a symmetric monoidal subcategory $\cC\subseteq \cD$, we get a partial
trace on $\cC$, defined by $\widehat{\Tr}^U_{A,B}(f)=\Tr^U_{A,B}(f)$
if $\Tr^U_{A,B}(f)$ exists and is an element of $\cC(A,B)$, and
undefined otherwise.

Slightly more generally, we have the following:

\begin{proposition}\label{prop-subcategory}
  Let $F:\cC\rightarrow \cD$ be a faithful strong symmetric monoidal
  functor from a symmetric monoidal category $(\cC,\otimes,I,\sigma)$ to a
  partially traced category $(\cD,\otimes,I,\sigma,\Tr)$. Then we obtain a
  partial trace $\widehat{\Tr}$ on $\cC$ as follows.  For $f:A\otimes
  U\rightarrow B\otimes U$, we define $\widehat{\Tr}^U_{A,B}(f)=g$ if
  there exists some (necessarily unique) $g:A\rightarrow B$ such that
  $F(g) = \Tr^{FU}_{FA,FB}(m^{-1}_{B,U}\circ F(f)\circ m_{A,U})$ is
  defined, and $\widehat{\Tr}^U_{A,B}(f)$ undefined otherwise.
\end{proposition}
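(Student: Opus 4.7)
The plan is to verify each of the six partial trace axioms for $\widehat{\Tr}$ by transporting the statement through $F$ into $\cD$ and invoking the corresponding axiom for $\Tr$. Faithfulness of $F$ ensures that whenever a witnessing $g:A\to B$ exists in $\cC$, it is unique, so $\widehat{\Tr}$ is well-defined as a partial operation. The main technical bookkeeping will be keeping track of the coherence isomorphisms $m_{A,U}$, using that they are natural in $A$ and $U$ (since $F$ is strong monoidal) and compatible with the symmetry (since $F$ is symmetric monoidal).

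First I would set up the following basic fact: for any $f:A\x U\to B\x U$ in $\cC$, writing $\bar f := m^{-1}_{B,U}\circ F(f)\circ m_{A,U}:FA\x FU\to FB\x FU$, we have $\widehat{\Tr}^U_{A,B}(f)\defined$ iff $\Tr^{FU}_{FA,FB}(\bar f)\defined$ and lies in the image of $F$, in which case $F(\widehat{\Tr}^U_{A,B}(f))=\Tr^{FU}_{FA,FB}(\bar f)$. Then for each axiom I proceed by applying $F$ to both sides, converting the composite $F$-images of $\cC$-morphisms into $\cD$-morphisms in canonical form using the naturality of $m$ and the hexagon/square for symmetries, so that the $\cD$-axiom applies directly. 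For instance, for naturality, given $g:A'\to A$ and $h:B\to B'$ in $\cC$, the naturality of $m$ gives $\overline{(h\x 1_U)\circ f\circ(g\x 1_U)} = (Fh\x 1_{FU})\circ\bar f\circ(Fg\x 1_{FU})$, and then the $\cD$-naturality of $\Tr$ combined with faithfulness of $F$ yields the directed Kleene equality for $\widehat{\Tr}$. Dinaturality, strength, and vanishing~I are analogous, using respectively the $m$-naturality in the second coordinate, the $m$-naturality against $g:C\to D$ combined with associativity coherence, and the unit coherence $m_I$.

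The main obstacle is vanishing~II, because it involves a nested trace over $U$ inside a trace over $V$, and the outer trace on the $\cD$-side is taken over $FU\x FV$ rather than $F(U\x V)$. The plan is to first verify that the $\cD$-morphism corresponding to the outer double-trace, namely $\overline{\Tr^{V}_{A\x U,B\x U}(f)}$, equals $\Tr^{FV}_{FA\x FU, FB\x FU}$ of a suitable reshaping of $\bar f$ by a pair of $m$-isomorphisms $m_{A,U}\x 1_{FV}$ and $m_{B,U}\x 1_{FV}$, using naturality of $\Tr$ (in the isomorphism case, per Remark on naturality with isomorphisms). Then applying vanishing~II in $\cD$ gives $\Tr^{FU\x FV}_{FA,FB}$ of the fully reshaped morphism, and I convert this back to $\Tr^{F(U\x V)}_{FA,FB}(\overline{f})$ by conjugating with $m_{U,V}$ and appealing once more to the isomorphism-naturality remark. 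Faithfulness of $F$ then lets me equate the underlying $\cC$-morphisms, giving the required Kleene equality; the hypothesis that the inner $\widehat{\Tr}^V$ is defined in $\cC$ is exactly what allows us to lift its $\cD$-value back along $F$ before applying the outer step.

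Finally, yanking is immediate: $\bar\sigma_{A,A}=\sigma_{FA,FA}$ because $F$ is symmetric monoidal, so $\Tr^{FA}_{FA,FA}(\bar\sigma_{A,A})=1_{FA}=F(1_A)$, and faithfulness gives $\widehat{\Tr}^A_{A,A}(\sigma_{A,A})=1_A$. Throughout, uniqueness of $g$ via faithfulness is what upgrades the existence of a $\cD$-side preimage to a genuine partial operation on $\cC$, and converts each Kleene (in)equality in $\cD$ into the corresponding one in $\cC$.
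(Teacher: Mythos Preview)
Your proposal is correct and follows the natural line of argument; the paper itself does not give a proof but simply refers to \cite{THESIS-OCTAVIO}, and your verification of the axioms by transport along $F$ with coherence bookkeeping is exactly what one expects that reference to contain. One small correction: in the vanishing~II step, the conversion from $\Tr^{FU\otimes FV}$ to $\Tr^{F(U\otimes V)}$ via conjugation by $m_{U,V}$ is an instance of \emph{dinaturality} (which is already a full Kleene equality in Definition~\ref{def-partial-trace}), not of the isomorphism case of naturality; the latter is only needed for the $m_{A,U}$ and $m_{B,U}$ conjugations that change the $A,B$ slots.
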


\begin{proof}
The details can be found in~\cite{THESIS-OCTAVIO}.
\end{proof}

\begin{remark}
  This yields a large class of examples of partially traced categories
  that are related to known totally traced categories. For example,
  consider the category $\SRelfin$ of finite sets and stochastic
  maps. Here, a stochastic map from $A$ to $B$ is a function from $A$
  to sub-probability distributions on $B$, with the obvious identities and
  composition. In elementary terms, this is a $[0,1]$-valued matrix
  whose columns have sum $\leq 1$. With the tensor $\oplus$ (disjoint
  union), this category is totally traced. With the tensor $\otimes$
  (cartesian product), it is not totally traced; however,
  $(\SRelfin,\otimes)$ can be regarded as a symmetric monoidal
  subcategory of the totally traced category $(\Vectfin,\otimes)$ of
  finite dimensional real vector spaces and linear functions.
  Therefore it inherits a partial trace.

  Other examples of partial traces arise in this way from the models
  for quantum computing considered in {\cite{Sel2004}}, for example on
  completely positive maps and on superoperators. Such examples are
  described in detail in {\cite{THESIS-OCTAVIO}}.
\end{remark}

\section{Paracategories and their completion}
\label{sec-paracat}

The goal of the remainder of this paper is to prove a strong converse
to Proposition~\ref{prop-subcategory}, i.e.: every partially
traced category arises as a symmetric monoidal subcategory of a
totally traced category. More precisely, we show that every partially
traced category can be faithfully embedded in a compact closed
category in such a way that the trace is preserved and reflected.

Our construction uses a partial version of the $\Int$-construction of
Joyal, Street, and Verity~\cite{JSV96}. When we try to apply the
$\Int$-construction to a partially traced category $\cC$, we find that
composition in $\Int(\cC)$ is in general only partially defined. We
therefore consider a notion of ``categories'' with partially defined
composition, namely, Freyd's paracategories~\cite{HerMat03}.
Specifically, we introduce the notion of a strict symmetric compact
closed paracategory.

We first show in Section~\ref{sec-paracat} that every partially traced
category can be fully and faithfully embedded in a compact closed
paracategory, by an analogue of the $\Int$-construction.  We then show
in Section~\ref{sec-pint} that every compact closed paracategory can
be embedded (faithfully, but not necessarily fully) in a compact
closed (total) category, using a construction similar to that of
Freyd. Finally, every compact closed category is (totally) traced,
yielding the desired result in Section~\ref{sec-representation}.

\subsection{Paracategories}

We recall Freyd's notion of paracategory.  A reference on this subject
is~\cite{HerMat03}. Informally, a paracategory is a category with
partially defined composition.

\begin{definition}
\rm A \textit{(directed) graph} $\cC$ consists of:
\begin{itemize}
\item a class of {\em objects} $\obj (\cC)$, and
\item for every pair of objects $A,B$, a set $\cC(A,B)$ of
  {\em arrows} from $A$ to $B$.
\end{itemize}
If $\cC,\cD$ are graphs, a {\em graph homomorphism} $F:\cC\to\cD$ is
given by a (total) function $F:\obj(\cC)\rightarrow \obj(\cD)$ and a
family of (total) functions $F:\cC(A,B)\rightarrow\cD(FA,FB)$. We say
that $F$ is {\em faithful} if $F:\cC(A,B)\rightarrow\cD(FA,FB)$ is
one-to-one for all $A,B$.
\end{definition}

\begin{definition}
  \rm Let $\cC$ be a graph. We define $\PC$, the {\em path
    category} of $\cC$, by $\obj(\PC)=\obj(\cC)$ and
  arrows from $A_0$ to $A_n$ are finite sequences
  $(A_0,f_1,A_1,f_2,\ldots,f_n,A_n)$ of alternating objects and arrows
  of the graph $\cC$, where $n\geq 0$ and $f_i:A_{i-1}\to A_i$
  for all $i$. We say that $n$ is the {\em length} of the path. To be
  clear, equality of arrows is literal equality of sequences.
  Composition is defined by concatenation, and the identity arrow at
  an object $A$ is the path $(A)$ of length zero.
\end{definition}

\begin{notation}
  For the sake of simplicity, we often write
  $\vec{f}=f_1,f_2,\ldots,f_n$ for a path, when the objects are
  understood. We use the comma ``,'' for concatenation.  We also
  write $\epsilon_A=(A)$ for the path of length zero at $A$, so
  that $\epsilon_A,\vec{f}=\vec{f}=\vec{f},\epsilon_B$ for a path
  $\vec{f}: A \rightarrow B$.
\end{notation}

Recall the definition of Kleene equality ``$\kleeneeq$'' and directed
Kleene equality ``$\kleeneleq$'' from Definition~\ref{def-kleene-equality}.

\begin{definition}\label{def-paracat}
  \rm A \textit{paracategory} $(\cC,[-])$ consists of a directed graph
  $\cC$ and a family of partial operations
  $[-]_{A,B}:\PC(A,B)\rightharpoonup\cC(A,B)$, called {\em (partial)
    composition}, which satisfies the following axioms. We usually
  omit the subscripts.
\begin{itemize}
\item[(a)] for all $A$, $[\epsilon_A]\defined$, i.e.,
$[-]$ is a total operation on empty paths;
\item[(b)] for paths of length one, $[f]\defined$ and $[f]=f$ (or
  equivalently, using Kleene equality, $[f]\kleeneeq f$);
\item[(c)] for all paths $\vec{r}:A\to B$, $\vec{f}:B\to C$, and
  $\vec{s}:C\to D$,
  \[ [\vec{f}\,]\,\defined \quad\mbox{implies}\quad
  [\vec{r},[\vec{f}\,],\vec{s}\,]\kleeneeq[\vec{r},\vec{f},\vec{s}\,].
  \]
\end{itemize}
\end{definition}

\begin{remark}
  Every category $\cC$ can be regarded as a paracategory with
  $[f_1,\ldots,f_n] = f_n \circ \ldots \circ f_1$. In this case,
  composition is a totally defined operation.
\end{remark}

\begin{remark}[Identity]\label{rem-identities}
  In any paracategory, we will write $1_A=[\epsilon_A]$. Note that by
  (a) and (c), it follows that $[\vec r, 1_A, \vec s\,] \kleeneeq [\vec
  r,\vec s\,]$ for all $\vec r$, $\vec s$, so the arrow $1_A$ indeed
  behaves like an identity.
\end{remark}

\begin{remark}[Inverses]\label{rem-inverse}
  If there are two arrows $b:A\to B$ and $b^{-1}:B\to A$ in a
  paracategory such that $[b,b^{-1}]=1_A$ and $[b^{-1},b]=1_B$, then
  for all arrows $f:X\to A$ and $g:X\to B$, $[f,b]=g$ iff
  $f=[g,b^{-1}]$. Namely, from the assumption $[f,b]= g$, we
  can deduce $[g,b^{-1}]\kleeneeq [[f,b],b^{-1}]\kleeneeq
  [f,b,b^{-1}]\kleeneeq[f,[b,b^{-1}]]\kleeneeq [f,1]\kleeneeq
  [f]=f$, and the proof of the converse is similar.
\end{remark}

\begin{convention}\label{con-f}
  We extend any graph homomorphism $F:\cC\to\cD$ to paths by the
  following slight abuse of notation: for any path $\vec f =
  f_1,\ldots,f_n$, we write
  \[ F\vec f := Ff_1,\ldots,Ff_n.
  \]
\end{convention}

\begin{definition}\label{def-functor-of-paracat}
  \rm Let $(\cC,[-])$ and $(\cD,[-]')$ be paracategories. A
  \textit{functor} of paracategories is a graph homomorphism
  $F:\cC\to\cD$ such that for all $\vec{p}$,
  \[   F[\,\vec{p}\,] \kleeneleq [\,F\vec{p}\,]'.
  \]
\end{definition}

We note that functors of paracategories preserve identities. Indeed,
since $[\epsilon_A]\defined$, we have $F(1_A) = F[\epsilon_A] =
[F\epsilon_A] = [\epsilon_{FA}] = 1_{FA}$.

\begin{definition}
  Let $(\cC,[-])$ and $(\cD,[-]')$ be paracategories. Then the
  paracategory $\cC\times\cD$ has $\obj(\cC\times\cD) :=
  \obj(\cC)\times\obj(\cD)$ and $(\cC\times\cD)((A,A'),(B,B')) :=
  \cC(A,B)\times \cD(A',B')$, and $[(f_1,g_1),\ldots,(f_n,g_n)]
  :\kleeneeq ([f_1,\ldots,f_n], [g_1,\ldots,g_n])$. Then
  $\cC\times\cD$ is a categorical product in the category of
  paracategories and functors.
\end{definition}

\subsection{Symmetric monoidal paracategories}

\begin{definition}\label{def-ssmpc}
  \rm A \textit{strict symmetric monoidal paracategory}
  $(\cC,[-],\otimes,I,\sigma)$ consists of:
  \begin{enumerate}
  \item[(a)] a paracategory $(\cC,[-])$;
  \item[(b)] a functor of paracategories $\otimes:\cC\times\cC\to\cC$, and
    an object $I$, satisfying
    \begin{itemize}
    \item $(A\otimes B)\otimes C=A\otimes (B\otimes C)$ on objects and
      $(f\otimes g)\otimes h=f\otimes (g\otimes h)$ on arrows
      (associativity);
    \item $A\otimes I=A=I\otimes A$ on objects and $f\otimes
      1_I=f=1_I\otimes f$ on arrows (unit laws);
    \end{itemize}
  \item[(c)] for all objects $A$ and $B$, an arrow
    $\sigma_{A,B}:A\otimes B\rightarrow B\otimes A$ such that:
    \begin{itemize}
    \item[-] for every $f:X\otimes B\otimes A\rightarrow Y$, $g:Y\rightarrow
      X\otimes A\otimes B$, $[1_X\x \sigma_{A,B},f]\defined$ and
      $[g,1_X\x \sigma_{A,B}]\defined$ (totality);
    \item[-] for every $f:A\rightarrow A'$ and $g:B\rightarrow B'$:
      $[f\otimes 1_B,\sigma]=[\sigma,1_B\otimes f]$ and $[1_A\otimes
      g,\sigma]=[\sigma,g\otimes 1_A]$ (naturality);
    \item[-] for every $A$ and $B$:
      $[\sigma_{A,B},\sigma_{B,A}]=1_{A\otimes B}$ (symmetry);
    \item[-] for every $A$, $B$, and $C$: $\sigma_{A,B\x C} =
      [\sigma_{A,B}\x 1_C, 1_B\x\sigma_{A,C}]$ (``hexagon'' axiom).
    \end{itemize}
  \end{enumerate}
\end{definition}

The assumption that $\x$ is a functor of paracategories explicitly
means that it is a graph homomorphism satisfying
\begin{equation}\label{eqn-ssmpc}
  [f_1,\dots,f_n]\otimes [g_1\dots,g_n]\kleeneleq [f_1\otimes g_1,\dots,f_n\otimes g_n].
\end{equation}

\begin{lemma}\label{lem-PARACAT2}
  For arrows $p:A\to B$, $q:C\to D$ of a strict symmetric monoidal
  paracategory $\cC$, we have that $[p\otimes 1_C,1_B\otimes q]$ and
  $[1_A\otimes q,p\otimes 1_D]$ are both defined and equal to
  $p\otimes q$. Moreover, for any paths $\vec f$ and $\vec g$, we have 
  $[\vec f, p\otimes 1,1\otimes q, \vec g\,]
  \kleeneeq [\vec f, p\otimes q, \vec g\,]
  \kleeneeq [\vec f, 1\otimes q,p\otimes 1, \vec g\,]$.
\end{lemma}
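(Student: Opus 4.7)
The plan is to derive both assertions directly from equation (\ref{eqn-ssmpc}), which encodes that $\otimes:\cC\times\cC\to\cC$ is a functor of paracategories, together with the substitution axiom (c) of Definition~\ref{def-paracat}.

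First I would apply (\ref{eqn-ssmpc}) to the two length-two paths $(p,1_B)$ and $(1_C,q)$. Each path composes trivially via Remark~\ref{rem-identities}: $[p,1_B]\kleeneeq p$ and $[1_C,q]\kleeneeq q$. Hence (\ref{eqn-ssmpc}) gives
\[
p\otimes q \;\kleeneeq\; [p,1_B]\otimes[1_C,q] \;\kleeneleq\; [p\otimes 1_C,\,1_B\otimes q].
\]
Since the leftmost expression is manifestly defined, the directed Kleene inequality forces the rightmost expression to be defined and equal to $p\otimes q$. The symmetric choice of paths $(1_A,p)$ and $(q,1_D)$ yields $[1_A\otimes q,\,p\otimes 1_D]=p\otimes q$ by exactly the same reasoning.

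For the ``moreover'' clause I would invoke axiom (c) of a paracategory with the inner path $(p\otimes 1_C,\,1_B\otimes q)$, whose composite we have just shown to be defined and equal to $p\otimes q$. With outer prefix $\vec f$ and suffix $\vec g$, axiom (c) yields
\[
[\vec f,\,p\otimes q,\,\vec g\,]\;\kleeneeq\;\bigl[\vec f,\,[p\otimes 1_C,\,1_B\otimes q],\,\vec g\,\bigr]\;\kleeneeq\;[\vec f,\,p\otimes 1_C,\,1_B\otimes q,\,\vec g\,].
\]
The analogous manipulation using the inner path $(1_A\otimes q,\,p\otimes 1_D)$ delivers the second Kleene equality.

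The only subtlety I anticipate is that (\ref{eqn-ssmpc}) provides merely a directed Kleene inequality, not genuine equality; one cannot in general pass from a defined tensored composite back to defined factors. Fortunately the inequality flows in exactly the direction I need, since the starting point $p\otimes q$ is automatically defined as a tensor product of arrows, and I am trying to conclude definedness of the composite rather than the other way around. Beyond this bookkeeping there is no genuine obstacle, and the proof is a direct two-step deployment of (\ref{eqn-ssmpc}) followed by the associativity/substitution axiom.
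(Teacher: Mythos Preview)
Your proof is correct and is essentially identical to the paper's own argument: the paper also uses Remark~\ref{rem-identities} and functoriality (i.e., equation~(\ref{eqn-ssmpc})) to get $p\otimes q=[p,1_B]\otimes[1_C,q]\kleeneleq[p\otimes 1_C,1_B\otimes q]$, observes that the left side is defined to force equality, and then applies paracategory axiom~(c) for the ``moreover'' clause. Even your remark about the direction of the Kleene inequality matches the paper's reasoning exactly.
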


\begin{proof}
  Let $p:A\rightarrow B$ and $q:C\rightarrow D$. By
  Remark~\ref{rem-identities} and functoriality, $p\otimes
  q=[p,1_B]\otimes [1_C,q]\kleeneleq [p\otimes 1_C,1_B\otimes q]$ and
  $p\otimes q=[1_A,p]\otimes [q,1_D]\kleeneleq[1_A\otimes q,p\otimes
  1_D]$. But $p\otimes q$ is totally defined, so all of the above
  terms are defined and equal.  Using this and axiom (c) of
  paracategories, we have for any paths $\vec f$ and $\vec g$,
  \[ [\vec f, p\otimes 1,1\otimes q, \vec g\,]
  \kleeneeq [\vec f, [p\otimes 1,1\otimes q\,], \vec g\,]
  \kleeneeq [\vec f, p\otimes q, \vec g\,],
  \]
  and similarly for $[\vec f, 1\otimes q,p\otimes 1, \vec g\,]$.
\end{proof}

\begin{lemma}\label{lem-ssmpc}
  In the definition of a strict symmetric monoidal paracategory,
  condition (\ref{eqn-ssmpc}) is equivalent to the following pair of
  conditions:\rm
  \begin{itemize}
  \item[(a)] $[f,f']\otimes [g,g']\kleeneleq [f\otimes g,f'\otimes
    g']$ where $f,g,f',g'$ are arrows of $\cC$; and
  \item[(b)] $1\otimes [\, \vec{p}\, ]\kleeneleq [\, 1\otimes
    \vec{p}\, ]$ and $[ \,\vec{p}\, ]\otimes 1 \kleeneleq [\,
    \vec{p}\otimes 1 \,]$.
  \end{itemize}
  Note that in stating (b), we have used Convention~\ref{con-f}, so by
  definition, $1\x\vec p = 1\x p_1,\ldots,1\x p_n$.
\end{lemma}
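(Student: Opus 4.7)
The plan is to prove the two implications separately. The forward direction $(\ref{eqn-ssmpc}) \Rightarrow$ (a), (b) is by specialization: condition (a) is precisely the $n=2$ instance of (\ref{eqn-ssmpc}), while both halves of (b) are obtained by taking one of the two paths in (\ref{eqn-ssmpc}) to be a constant string of identities; by iterating Remark~\ref{rem-identities}, $[1_A,\ldots,1_A] = 1_A$, so the substitution is valid. (In particular, the $n=0$ case of this substitution ensures that $\otimes$ preserves identities, i.e.\ $1\otimes 1 = 1$.)

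For the converse, suppose (a) and (b) hold, and fix paths $\vec f = f_1,\ldots,f_n$ and $\vec g = g_1,\ldots,g_n$ of equal length. We may assume that both $[\vec f\,]$ and $[\vec g\,]$ are defined, since otherwise the left-hand side of (\ref{eqn-ssmpc}) is undefined and there is nothing to prove. The cases $n\leq 1$ are trivial, and $n=2$ is (a) itself. For $n\geq 3$, the key observation is that the proof of Lemma~\ref{lem-PARACAT2} uses only the $n=2$ instance of (\ref{eqn-ssmpc}) (i.e.\ (a)) together with paracategory axiom (c), so Lemma~\ref{lem-PARACAT2} is already available under our assumptions.

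Applying Lemma~\ref{lem-PARACAT2} with $p = [\vec f\,]$ and $q = [\vec g\,]$ gives $[\vec f\,]\otimes[\vec g\,] \kleeneeq [\,[\vec f\,]\otimes 1,\; 1\otimes[\vec g\,]\,]$. By (b), $[\vec f\,]\otimes 1 \kleeneeq [\vec f\otimes 1\,]$ and $1\otimes[\vec g\,] \kleeneeq [\,1\otimes \vec g\,]$, both defined. Substituting and using axiom (c) twice to flatten the nested brackets yields
\[ [\vec f\,]\otimes[\vec g\,] \kleeneeq [f_1\otimes 1,\ldots,f_n\otimes 1,\;1\otimes g_1,\ldots,1\otimes g_n]. \]
It remains to reshape the right-hand side into $[f_1\otimes g_1,\ldots,f_n\otimes g_n]$. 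The sliding clause of Lemma~\ref{lem-PARACAT2}, which asserts $[\vec u, p\otimes 1, 1\otimes q, \vec v\,] \kleeneeq [\vec u, 1\otimes q, p\otimes 1, \vec v\,]$, lets one bubble each $1\otimes g_j$ leftwards past the remaining $f_k\otimes 1$ with $k>j$, arriving at the alternating path $[f_1\otimes 1, 1\otimes g_1,\ldots,f_n\otimes 1, 1\otimes g_n]$. The other clause of the same lemma then merges each adjacent pair $f_i\otimes 1, 1\otimes g_i$ into $f_i\otimes g_i$, completing the argument.

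The main obstacle is the combinatorial bookkeeping in the bubble-sort step: the intermediate objects along the path shift as wires slide past one another, so one must check that every intermediate path is independently well-typed and that each swap, insertion, and deletion of brackets is justified as a Kleene equality rather than a mere equality of defined morphisms. This is the place where the argument is most prone to slips, though none of it is conceptually difficult.
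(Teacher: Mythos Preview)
Your proof is correct and follows essentially the same route as the paper: both directions match, including the key observation that Lemma~\ref{lem-PARACAT2} relies only on the $n=2$ case (your condition (a)), so it is available for the converse. Your write-up is in fact more explicit than the paper's, which compresses the entire reshuffling from $[\vec f\otimes 1,\,1\otimes\vec g\,]$ to $[f_1\otimes g_1,\ldots,f_n\otimes g_n]$ into a single ``$\kleeneeq\ldots\kleeneeq$''; your bubble-sort-then-merge description is exactly what that ellipsis hides.
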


\begin{proof}
  Clearly (\ref{eqn-ssmpc}) implies (a). Also, by
  Remark~\ref{rem-identities} and (\ref{eqn-ssmpc}), we have
  $1\x[\,\vec p\,]\kleeneeq [1,\ldots,1]\x[\,\vec p\,]\kleeneleq
  [1\x\vec p\,]$, and similarly $[ \,\vec{p}\, ]\otimes 1 \kleeneleq
  [\, \vec{p}\otimes 1 \,]$, so (\ref{eqn-ssmpc}) implies (b).  For
  the converse, first note that the proof of Lemma~\ref{lem-PARACAT2}
  only uses property (a). Assume $[\vec f\,]$ and $[\vec g\,]$ are
  defined. Then by Lemma~\ref{lem-PARACAT2} and (b), we have $[\vec
  f\,]\x[\vec g\,] = [[\vec f\,]\x 1,1\x [\vec g\,]] \kleeneleq [\vec f\x
  1,1\x\vec g\,]\kleeneeq\ldots\kleeneeq[f_1\x 1,1\x g_1,\ldots,f_n\x
  1,1\x g_n]\kleeneeq[f_1\x g_1,\ldots,f_n\x g_n]$.
\end{proof}

\begin{definition}
  \rm Let $(\cC,[-],\otimes,I,\sigma)$ and
  $(\cD,[-]',\otimes',I',\sigma')$ be strict symmetric monoidal
  paracategories. A functor between them is \textit{strict symmetric
    monoidal} when $F(A)\otimes' F(B)=F(A\otimes B)$ and $F(I)=I'$ on
  objects, and $F(f)\otimes' F(g)=F(f\otimes g)$ and
  $F(\sigma)=\sigma'$ on arrows.
\end{definition}

\subsection{The completion of symmetric monoidal paracategories}

In this section, we will prove that every strict symmetric monoidal
paracategory can be faithfully embedded in a strict symmetric monoidal
category. From now on, $\cC$ denotes a strict symmetric monoidal
paracategory.

\begin{definition}\label{def-PARACAT3}
\rm A {\em congruence relation} $\cS$ on $\PC$ is given as
 follows: for every pair of objects $A,B$, an equivalence relation
   $\sim_{\cS}^{A,B}$ on the hom-set $\PC(A,B)$, satisfying the following
   axioms. We usually omit the superscripts when they are clear from
   the context.

\begin{itemize}
\item[(1)] If $\vec p \sim_{\cS} \vecp{p}$ and $\vec q \sim_{\cS} \vecp{q}$, then
    $\vec p,\vec q \sim_{\cS} \vecp{p},\vecp{q}$.
\item[(2)] Whenever $[\vec{p}\,]\defined$, then $\vec{p} \sim_{\cS} [\vec{p}\,]$.
\item[(3)] If $\vec p \sim_{\cS} \vec q$, then $\vec p\otimes 1 \sim_{\cS} \vec
     q\otimes 1$ and $1\otimes \vec p \sim_{\cS} 1\otimes \vec q$.
\end{itemize}
\end {definition}

\begin{definition}
 We define a particular congruence relation $\hS$ as follows: $\vec{p} \sim_{\hS} \vec{q}$ if and only if for all objects $A,B$ and all $\vec{r},\vec{s}$,
 \[ [\,\vec{r},1_A\otimes \vec{p}\otimes 1_B,\vec{s}\,]
 \kleeneeq[\,\vec{r},1_A\otimes \vec{q}\otimes 1_B,\vec{s}\,].
 \]
\end{definition}

\begin{remark}\label{rem-PARACAT1}
  \rm It should be observed that $\vec{p}\sim_{\hS}\vec{q}$
  implies $[\,\vec{p}\,]\kleeneeq[\,\vec{q}\,]$ by letting $A = B = I$
  and $\vec{r}, \vec{s}$ be empty lists.
\end{remark}

\begin{lemma}
$\hS$ is a congruence relation.
\end{lemma}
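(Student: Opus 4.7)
The plan is to verify the four defining conditions of a congruence relation: (i) that $\sim_{\hS}$ is an equivalence relation, plus axioms (1), (2), and (3) from Definition~\ref{def-PARACAT3}. Reflexivity, symmetry, and transitivity of $\sim_{\hS}$ are immediate, since they reduce to the corresponding properties of Kleene equality quantified over all $A,B,\vec r,\vec s$.

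First I would dispatch axiom (3), which is the easiest. If $\vec p\sim_{\hS}\vec q$, then to show $1_C\otimes\vec p\sim_{\hS}1_C\otimes\vec q$, I would use strict associativity of $\otimes$ to rewrite $1_A\otimes(1_C\otimes\vec p)\otimes 1_B = 1_{A\otimes C}\otimes\vec p\otimes 1_B$, after which the required Kleene equality is just an instance of the hypothesis with $A$ replaced by $A\otimes C$. The case $\vec p\otimes 1\sim_{\hS}\vec q\otimes 1$ is symmetric.

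Next I would handle axiom (1). Given $\vec p\sim_{\hS}\vec p'$ and $\vec q\sim_{\hS}\vec q'$, I observe that the path $1_A\otimes(\vec p,\vec q)\otimes 1_B$ literally equals the concatenation $(1_A\otimes\vec p\otimes 1_B),(1_A\otimes\vec q\otimes 1_B)$. So I can rewrite in two stages: first apply the hypothesis $\vec p\sim_{\hS}\vec p'$, absorbing $1_A\otimes\vec q\otimes 1_B$ into the right list $\vec s$; then apply $\vec q\sim_{\hS}\vec q'$, absorbing $1_A\otimes\vec p'\otimes 1_B$ into the left list $\vec r$. Composing the two Kleene equalities yields $[\vec r,1_A\otimes(\vec p,\vec q)\otimes 1_B,\vec s]\kleeneeq[\vec r,1_A\otimes(\vec p',\vec q')\otimes 1_B,\vec s]$, as required.

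The main obstacle, such as it is, is axiom (2): if $[\vec p\,]\defined$, we must show $\vec p\sim_{\hS}[\vec p\,]$. Here the subtlety is in comparing the length-$n$ path $1_A\otimes\vec p\otimes 1_B$ with the length-$1$ path $(1_A\otimes[\vec p\,]\otimes 1_B)$. I would apply Lemma~\ref{lem-ssmpc}(b) twice: since $1_A\otimes[\vec p\,]$ is a total operation and thus defined, the directed Kleene inequality $1_A\otimes[\vec p\,]\kleeneleq[1_A\otimes\vec p\,]$ forces $[1_A\otimes\vec p\,]\defined$ and equal; applying $-\otimes 1_B$ and Lemma~\ref{lem-ssmpc}(b) once more gives $[1_A\otimes\vec p\otimes 1_B]\defined$ and equal to $1_A\otimes[\vec p\,]\otimes 1_B$. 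Then axiom~(c) of paracategories lets us conclude $[\vec r,1_A\otimes\vec p\otimes 1_B,\vec s]\kleeneeq[\vec r,[1_A\otimes\vec p\otimes 1_B],\vec s]\kleeneeq[\vec r,1_A\otimes[\vec p\,]\otimes 1_B,\vec s]$, which is the required Kleene equality.
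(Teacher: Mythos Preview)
Your proof is correct and follows essentially the same approach as the paper's: axiom~(1) via the same two-stage substitution, axiom~(2) via Lemma~\ref{lem-ssmpc}(b) to get $[1_A\otimes\vec p\otimes 1_B]=1_A\otimes[\vec p\,]\otimes 1_B$ and then paracategory axiom~(c), and axiom~(3) by absorbing the extra $1_C$ into the $A$ or $B$ already present in the definition of $\hS$. The only difference is that you spell out in more detail what the paper leaves implicit (e.g.\ the double application of Lemma~\ref{lem-ssmpc}(b) and the equivalence-relation properties).
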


\begin{proof}
  We need to show axioms (1)--(3). To show (1), note that
  $\vec{p}\sim_{\hS} \vecp{p}$ and $\vec{q}\sim_{\hS}
  \vecp{q}$ implies
  \begin{eqnarray*}
    [\,\vec{r},1_A\otimes (\vec{p},\vec{q})\otimes 1_B,\vec{s}\,]
    &\kleeneeq&
    [\,\vec{r},1_A\otimes \vec{p}\otimes 1_B,
    1_A\otimes \vec{q}\otimes 1_B,\vec{s}\,]
    \\ &\kleeneeq& [\,\vec{r},1_A\otimes \vecp{p}\otimes 1_B,
    1_A\otimes \vec{q}\otimes 1_B,\vec{s}\,]
    \\ &\kleeneeq& [\,\vec{r},1_A\otimes \vecp{p}\otimes 1_B,
    1_A\otimes \vecp{q}\otimes 1_B,\vec{s}\,]
    \\ &\kleeneeq&
    [\,\vec{r},1_A\otimes (\vecp{p},\vecp{q})\otimes 1_B,\vec{s}\,],
  \end{eqnarray*}
  where the first and last equation is just the definition of
  $\otimes$ on paths. Therefore $\vec p,\vec q \sim_{\cS}
  \vecp{p},\vecp{q}$.  To show (2), assume $[\vec p\,]\defined$. Then by
  Lemma~\ref{lem-ssmpc}(b), $1_A\x[\vec{p}\,]\x1_B =
  [1_A\x\vec{p}\x1_B]$ is defined, and from the laws of
  paracategories, 
  \[ [\,\vec{r},1_A\x\vec{p}\x1_B,\vec{s}\,] \kleeneeq
  [\,\vec{r},[1_A\x\vec{p}\x1_B],\vec{s}\,] \kleeneeq
  [\,\vec{r},1_A\x[\vec{p}\,]\x1_B,\vec{s}\,].
  \]
  Property (3) is immediate from the definition of $\hS$.
\end{proof}

\begin{definition}\label{def-sim}
\rm
 Let $\sim$ be the smallest congruence relation on $\PC$,
i.e., the intersection of all congruence relations.
\end{definition}

\begin{lemma}\label{lem-PARACAT6}
  $\vec{p}\sim\vec{q}$ implies $[\,\vec{p}\,]\kleeneeq[\,\vec{q}\,]$.
\end{lemma}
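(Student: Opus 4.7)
The plan is to exploit the minimality of $\sim$ together with the fact that $\hS$ is already known to be a congruence relation on $\PC$ with the desired property. Since $\sim$ is defined as the intersection of all congruence relations (Definition~\ref{def-sim}), we automatically have the inclusion $\sim\,\subseteq\,\sim_{\hS}$.

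More concretely, I would argue as follows. Assume $\vec{p}\sim\vec{q}$. By the previous lemma, $\hS$ is a congruence relation, so it is one of the relations over which the intersection defining $\sim$ is taken. Hence $\sim\,\subseteq\,\sim_{\hS}$, and therefore $\vec{p}\sim_{\hS}\vec{q}$. Then invoking Remark~\ref{rem-PARACAT1} (which was obtained by instantiating $A=B=I$ and taking $\vec{r},\vec{s}$ empty in the definition of $\sim_{\hS}$), we conclude $[\,\vec{p}\,]\kleeneeq[\,\vec{q}\,]$, as required.

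There is essentially no obstacle here: all the work was done in showing that $\hS$ is a congruence relation and in Remark~\ref{rem-PARACAT1}. The present lemma is just the statement that this one distinguished congruence $\hS$, being among those intersected, dominates $\sim$, so any property already established for $\sim_{\hS}$ transfers to $\sim$. Thus the proof is a one-line appeal to the universal property of $\sim$ together with Remark~\ref{rem-PARACAT1}.
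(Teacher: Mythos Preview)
Your proof is correct and follows exactly the same approach as the paper: use the minimality of $\sim$ to get $\sim\,\subseteq\,\sim_{\hS}$, then apply Remark~\ref{rem-PARACAT1}. The paper's proof is essentially a one-sentence version of what you wrote.
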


\begin{proof}
  Since $\sim$ is the smallest congruence relation,
  $\vec{p}\sim\vec{q}$ implies $\vec{p}\sim_{\hS}\vec{q}$, which
  implies $[\,\vec{p}\,]\kleeneeq[\,\vec{q}\,]$ by
  Remark~\ref{rem-PARACAT1}.
\end{proof}

\begin{corollary}\label{cor-path-singleton}
  If $\vec p, q:A\to B$ are paths where $q$ is of length 1, then
  $\vec p\sim q$ iff $[\vec p\,]\defined$ and $[\vec p\,]=q$.
\end{corollary}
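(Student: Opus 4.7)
The statement has two directions, each of which should be a short application of earlier results.

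For the forward direction, I would simply invoke Lemma~\ref{lem-PARACAT6}. Assuming $\vec p\sim q$, that lemma gives $[\vec p\,]\kleeneeq [q]$. Since $q$ has length $1$, axiom (b) of Definition~\ref{def-paracat} yields $[q]\defined$ and $[q]=q$; hence $[\vec p\,]\defined$ and $[\vec p\,]=q$.

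For the backward direction, I would use axiom (2) of a congruence relation (Definition~\ref{def-PARACAT3}). Since $\sim$ is itself a congruence relation (being the intersection of all congruence relations, by Definition~\ref{def-sim}), and since $[\vec p\,]\defined$ by hypothesis, axiom (2) gives $\vec p\sim [\vec p\,]$, where the right-hand side is viewed as a path of length $1$. But by hypothesis that length-$1$ path is literally $q$, so $\vec p\sim q$.

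The whole argument is essentially bookkeeping; the only subtlety worth flagging is the implicit identification between an arrow $r$ and the length-one path $(A,r,B)$, which is precisely the identification used in axiom (b) and axiom (2) above. There is no substantial obstacle, so I would expect the final write-up to be just a few lines.
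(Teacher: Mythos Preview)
Your proposal is correct and matches the paper's proof essentially line for line: the paper likewise derives the forward direction from Lemma~\ref{lem-PARACAT6} together with axiom (b) of Definition~\ref{def-paracat}, and the backward direction directly from Definition~\ref{def-PARACAT3}(2). Your remark about identifying an arrow with its length-one path is exactly the implicit convention at work, so nothing further is needed.
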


\begin{proof}
  The left-to-right direction is obvious from
  Lemma~\ref{lem-PARACAT6} and axiom (b) of paracategories. The
  right-to-left direction is Definition~\ref{def-PARACAT3}(2).
\end{proof}

\begin{corollary}\label{cor-PARACAT6b}
  If $p,q:A\to B$ are paths of length 1, then $p\sim q$ iff $p=q$.
\end{corollary}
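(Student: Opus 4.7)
The plan is to derive this corollary essentially for free from Corollary~\ref{cor-path-singleton}, since when both sides of $\vec p \sim q$ happen to be singleton paths, the corollary collapses to literal equality.

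For the right-to-left direction, $p = q$ implies $p \sim q$ by reflexivity of the equivalence relation $\sim$ (which is built into Definition~\ref{def-PARACAT3}).

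For the left-to-right direction, I would apply Corollary~\ref{cor-path-singleton} with the role of $\vec p$ played by the singleton path $p$ and the role of the ``length one'' path played by $q$. The corollary yields that $p \sim q$ iff $[p]\defined$ and $[p] = q$. But by axiom (b) of a paracategory (Definition~\ref{def-paracat}), $[p]\defined$ and $[p] = p$ for any single arrow $p$. Substituting, we obtain $p = q$, as desired.

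There is no real obstacle here; the statement is essentially a specialization of Corollary~\ref{cor-path-singleton} in which the hypothesis ``$q$ has length one'' is symmetrically strengthened to ``both $p$ and $q$ have length one'', allowing us to cancel the bracketed composition on one side using paracategory axiom (b).
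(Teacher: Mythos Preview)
Your proof is correct and essentially the same as the paper's: the paper appeals directly to Lemma~\ref{lem-PARACAT6} (that $\vec p\sim\vec q$ implies $[\vec p\,]\kleeneeq[\vec q\,]$) together with axiom~(b), while you route through Corollary~\ref{cor-path-singleton}, which is itself an immediate consequence of those two facts. The underlying argument is identical.
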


\begin{proof}
  From Lemma~\ref{lem-PARACAT6} and axiom (b) of paracategories.
\end{proof}

\begin{definition}\label{def-pathtensor}
We now introduce the following notation, where $\vec{f}$ and $\vec{g}$
are paths, not necessarily of the same length.
\begin{equation}
  \vec{f}\otimesp\vec{g} := (\vec{f}\otimes 1),\,(1\otimes\vec{g}).
\end{equation}
\end{definition}

Note that, as a path, this is not equal to
$(1\otimes \vec{g}),(\vec{f}\otimes 1)$. However, we will show that
they are congruent.

\begin{lemma}\label{lem-PARACAT5}
  Let $\cS$ be a congruence relation on $\PC$. Then
  $\vec{f}\sim_{\cS}\vecp{f}$ and $\vec{g}\sim_{\cS}\vecp{g}$ implies
  $\vec{f}\otimesp\vec{g} \sim_{\cS}\vecp{f}\otimesp\vecp{g}$.
\end{lemma}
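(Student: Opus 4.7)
The plan is to derive the conclusion by a direct, three-step application of the congruence axioms (1)--(3) from Definition~\ref{def-PARACAT3}. Unfolding Definition~\ref{def-pathtensor}, what we need to show is
\[
(\vec{f}\otimes 1),\,(1\otimes\vec{g}) \;\sim_{\cS}\; (\vecp{f}\otimes 1),\,(1\otimes\vecp{g}),
\]
and this breaks naturally into two parts that are concatenated at the end.

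First, I would apply axiom (3) to the hypothesis $\vec{f}\sim_{\cS}\vecp{f}$ to obtain $\vec{f}\otimes 1 \sim_{\cS} \vecp{f}\otimes 1$, and similarly apply axiom (3) to $\vec{g}\sim_{\cS}\vecp{g}$ to obtain $1\otimes\vec{g} \sim_{\cS} 1\otimes\vecp{g}$. Here the implicit identities have the appropriate object labels so that the tensored paths are composable (for $\vec{f}:A\to B$ and $\vec{g}:C\to D$, the path $\vec{f}\otimes 1_C$ goes from $A\otimes C$ to $B\otimes C$, and $1_B\otimes\vec{g}$ continues from $B\otimes C$ to $B\otimes D$, so the concatenation in the definition of $\otimesp$ is well-typed).

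Second, I would apply axiom (1), which asserts that $\sim_{\cS}$ respects concatenation of composable paths. Combining the two congruences from the previous step yields
\[
(\vec{f}\otimes 1),(1\otimes\vec{g}) \;\sim_{\cS}\; (\vecp{f}\otimes 1),(1\otimes\vecp{g}),
\]
which by Definition~\ref{def-pathtensor} is precisely $\vec{f}\otimesp\vec{g} \sim_{\cS}\vecp{f}\otimesp\vecp{g}$. There is no real obstacle here: the statement is essentially a formal check that the derived operation $\otimesp$ built from $\otimes 1$, $1\otimes$, and concatenation inherits congruence closure from the atomic operations that $\cS$ is assumed to respect. Axiom (2) plays no role in this lemma.
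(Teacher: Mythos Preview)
Your proof is correct and takes essentially the same approach as the paper: the paper's proof is a one-line appeal to Definition~\ref{def-PARACAT3}(1) and (3), which is exactly what you have unpacked in detail. Your additional remarks on the typing of the concatenation and the irrelevance of axiom~(2) are accurate.
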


\begin{proof}
  Assuming $\vec{f}\sim_{\cS}\vecp{f}$ and
  $\vec{g}\sim_{\cS}\vecp{g}$, we immediately have $(\vec{f}\otimes
  1),(1\otimes\vec{g})\sim_{\cS}(\vecp{f}\otimes
  1),(1\otimes\vecp{g})$ by Definition~\ref{def-PARACAT3}(1) and (3).
\end{proof}

\begin{lemma}
  Let $\cS$ be a congruence relation of $\PC$. Then
  \[ (\vec{f}\otimes 1),(1\otimes \vec{g})\sim_{\cS}
  (1\otimes\vec{g}),(\vec{f}\otimes 1).
  \]
\end{lemma}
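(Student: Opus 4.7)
The plan is a double induction on the lengths of $\vec f$ and $\vec g$, reducing everything to the length-one case, which follows from Lemma~\ref{lem-PARACAT2} together with axiom (2) of congruence relations.

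\emph{Base case ($\vec f = f$, $\vec g = g$, both of length one).} By Lemma~\ref{lem-PARACAT2}, both $[f\otimes 1,1\otimes g]$ and $[1\otimes g,f\otimes 1]$ are defined and equal to $f\otimes g$. Then Definition~\ref{def-PARACAT3}(2) gives
\[
(f\otimes 1,1\otimes g)\sim_{\cS} f\otimes g\sim_{\cS}(1\otimes g,f\otimes 1),
\]
and transitivity/symmetry of $\sim_{\cS}$ completes this case.

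\emph{Auxiliary step (general $\vec f$, $\vec g = g$ of length one).} I would prove $(\vec f\otimes 1),(1\otimes g)\sim_{\cS}(1\otimes g),(\vec f\otimes 1)$ by induction on the length of $\vec f$. The base case is the step above. For the inductive step, write $\vec f = f_1,\vec f\,'$. By the inductive hypothesis $(\vec f\,'\otimes 1),(1\otimes g)\sim_{\cS}(1\otimes g),(\vec f\,'\otimes 1)$, and by the base case applied to $f_1$ and $g$, we have $(f_1\otimes 1,1\otimes g)\sim_{\cS}(1\otimes g,f_1\otimes 1)$. Concatenating using Definition~\ref{def-PARACAT3}(1) (twice, with the ``empty'' path on one side to localize the rewrite) yields
\[
(f_1\otimes 1),(\vec f\,'\otimes 1),(1\otimes g)
\sim_{\cS}(f_1\otimes 1),(1\otimes g),(\vec f\,'\otimes 1)
\sim_{\cS}(1\otimes g),(f_1\otimes 1),(\vec f\,'\otimes 1),
\]
which is exactly $(\vec f\otimes 1),(1\otimes g)\sim_{\cS}(1\otimes g),(\vec f\otimes 1)$.

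\emph{Main induction on the length of $\vec g$.} If $\vec g$ is empty, both sides equal $\vec f\otimes 1$ (noting $1\otimes\varepsilon$ is the empty path), and reflexivity suffices. Otherwise write $\vec g = g_1,\vec g\,'$. Using the auxiliary step to pull $1\otimes g_1$ past $\vec f\otimes 1$, and the outer inductive hypothesis to pull $1\otimes\vec g\,'$ past $\vec f\otimes 1$, we get
\[
(\vec f\otimes 1),(1\otimes g_1),(1\otimes \vec g\,')
\sim_{\cS}(1\otimes g_1),(\vec f\otimes 1),(1\otimes \vec g\,')
\sim_{\cS}(1\otimes g_1),(1\otimes \vec g\,'),(\vec f\otimes 1),
\]
where each step is an application of Definition~\ref{def-PARACAT3}(1) to insert the known congruence into a larger path.

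The only subtlety is the bookkeeping: each swap must be presented as a concatenation of three pieces (a prefix, the two-element pattern being commuted, and a suffix) so that axiom (1) applies cleanly. No new categorical ideas are needed beyond the length-one case, which is where Lemma~\ref{lem-PARACAT2} does the real work.
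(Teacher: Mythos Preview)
Your proof is correct and follows essentially the same approach as the paper: both establish the length-one case via Lemma~\ref{lem-PARACAT2} and Definition~\ref{def-PARACAT3}(2), then extend to arbitrary paths by induction using Definition~\ref{def-PARACAT3}(1) and transitivity. The paper compresses the induction into a single sentence, whereas you spell out the double induction explicitly, but the argument is the same.
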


\begin{proof}
  First, consider arrows $f,g$ of $\cC$. By
  Lemma~\ref{lem-PARACAT2}, we have $[f\x 1,1\x g] = f\x g = [1\x
  g,f\x 1]$, and in particular, these terms are defined. Therefore by
  Definition~\ref{def-PARACAT3}(2),
  \[ f\x 1,1\x g \sim_{\cS} [f\x 1,1\x g] = [1\x g,f\x 1] \sim_{\cS}
  1\x g,f\x 1.
  \]
  The general claim follows by induction, using
  Definition~\ref{def-PARACAT3}(1) and transitivity.
\end{proof}

\begin{proposition}\label{prop-quotient-ssmpc}
  Let $\cC$ be a strict symmetric monoidal paracategory, and let $\cS$
  be a congruence relation on $\PC$.  Then the quotient $\PC/\cS$ is a
  strict symmetric monoidal category.
\end{proposition}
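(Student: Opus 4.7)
The plan is to take the quotient $\PC/\cS$ at face value: its objects are those of $\cC$, its hom-sets are the equivalence classes $\PC(A,B)/{\sim_\cS}$, composition is inherited from concatenation of paths (well-defined on the quotient by Definition~\ref{def-PARACAT3}(1)), and the identity at $A$ is $[\epsilon_A]_\cS$. The category axioms then come for free, since concatenation in $\PC$ is already strictly associative and unital.

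Next I would define the monoidal structure. On objects, $A \otimes B$ is as in $\cC$; on morphisms,
\[ [\vec f]_\cS \otimes [\vec g]_\cS \;:=\; [\,\vec f \otimesp \vec g\,]_\cS, \]
which is well-defined by Lemma~\ref{lem-PARACAT5}. Bifunctoriality requires two checks: preservation of identities is automatic, since $\epsilon_A \otimesp \epsilon_B$ is the empty path at $A \otimes B$; preservation of composition amounts to
\[ (\vec f,\vec{f}')\otimesp(\vec g,\vec{g}') \;\sim\; (\vec f\otimesp\vec g),\,(\vec{f}'\otimesp\vec{g}'), \]
which, after unfolding both sides via Definition~\ref{def-pathtensor}, reduces to the interchange $(1\otimes\vec g),(\vec{f}'\otimes 1) \sim (\vec{f}'\otimes 1),(1\otimes\vec g)$ established in the lemma immediately preceding the proposition. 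Strict associativity on morphisms is a literal equality of paths in $\PC$: using strictness of $\otimes$ in $\cC$, both $(\vec f\otimesp\vec g)\otimesp\vec h$ and $\vec f\otimesp(\vec g\otimesp\vec h)$ unfold to $(\vec f\otimes 1\otimes 1),(1\otimes\vec g\otimes 1),(1\otimes 1\otimes\vec h)$, and similarly $\vec f\otimesp\epsilon_I = \vec f$ componentwise via $f\otimes 1_I = f$.

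For the symmetry I take $\sigma_{A,B}$ in $\PC/\cS$ to be the class of the length-one path $(\sigma_{A,B})$. Each of its axioms (naturality, the symmetry equation, and the hexagon) is already asserted in Definition~\ref{def-ssmpc}(c) as an equality in $\cC$ between totally defined composites of short paths, and translates into the corresponding equation in $\PC/\cS$ via Definition~\ref{def-PARACAT3}(2): whenever $[\vec p\,]$ is defined in $\cC$, then $\vec p \sim_\cS [\vec p\,]$, so both representatives collapse to the same equivalence class. The main obstacle, and essentially the only nontrivial point, is the bifunctoriality of $\otimes$ on the quotient, which rests entirely on the interchange lemma just above the proposition; everything else reduces either to strictness already present in the ambient paracategory or to pushing Kleene equalities through the quotient via congruence axiom~(2).
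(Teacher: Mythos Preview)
Your proposal is correct and follows the same approach as the paper's proof, which is in fact a terse sketch (``the laws of strict symmetric monoidal categories are easily verified''). You have filled in precisely the details the paper omits: well-definedness of composition via congruence axiom~(1), well-definedness of the tensor via Lemma~\ref{lem-PARACAT5}, bifunctoriality via the interchange lemma immediately preceding the proposition, strict associativity and unit as literal equalities of paths, and the symmetry axioms pushed through from Definition~\ref{def-ssmpc}(c) using congruence axiom~(2). Your identification of bifunctoriality (specifically, the interchange step) as the single nontrivial point is exactly right.
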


\begin{proof}
  $\PC/\cS$ is evidently a category; its objects are those of $\cC$
  and its morphisms $\bar{\vec{f}}=\overline{f_1,\ldots,f_n}$ are
  $\cS$-equivalence classes of paths. Composition is given by
  concatenation of paths, and is well-defined by
  Definition~\ref{def-PARACAT3}(1).  A bifunctor $\otimespp:\PC/\cS \times
  \PC/\cS\rightarrow \PC/\cS$ is defined by
  $\bar{\vec{f}}\otimespp\bar{\vec{g}} =
  \overline{\vec{f}\otimesp\vec{g}}$, and is
  well-defined by Lemma~\ref{lem-PARACAT5}. The symmetry is given by
  $\overline{\sigma_{A,B}}:A\x B\to B\x A$. The laws of strict symmetric
  monoidal categories are easily verified.
\end{proof}

From now on, we also write ``$\semi$'' to denote composition in the quotient
category written in diagrammatic order, i.e., concatenation of
(equivalence classes of) paths. Also, by a slight abuse of notation,
we write $1_A = \overline{1_A}$ for the identities in $\PC/\cS$, i.e.,
this is the equivalence class of the empty path at $A$.

We are now ready to prove that every strict symmetric monoidal
paracategory can be faithfully embedded in a strict symmetric monoidal
category.

\begin{definition}\label{def-f}
  If $\cC$ is a strict symmetric monoidal paracategory, $\cS$ a
  congruence, and $\PC/\cS$ is the quotient category, we define a
  functor of paracategories $F:\cC\rightarrow \PC/\cS$, where the
  category $\PC/\cS$ is understood as a (total) paracategory, as
  follows.
  \begin{itemize}
  \item[-] on objects, $F$ is the identity, and
  \item[-] on arrows, $F(f)=\overline{f}$, the equivalence class of a
    path of length 1.
  \end{itemize}
\end{definition}

\begin{proposition}\label{prop-PARACAT7}
  $F:\cC\rightarrow \PC/\cS$ is a well-defined functor of symmetric
  monoidal paracategories. Moreover, if $\cS$ is the smallest
  congruence relation $\sim$, then $F$ is faithful. 
\end{proposition}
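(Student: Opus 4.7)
My plan is to verify each of the four things in sequence: (i) $F$ is a graph homomorphism, (ii) $F$ satisfies the paracategorical inequality $F[\vec p]\kleeneleq [F\vec p]'$, (iii) $F$ strictly preserves the symmetric monoidal structure, and (iv) when $\cS=\sim$, the map $F$ is injective on hom-sets.

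For (i), this is immediate since $F$ is the identity on objects and sends each arrow $f:A\to B$ to the equivalence class $\overline{f}$ of the length-one path, which lives in $(\PC/\cS)(A,B)$. For (ii), suppose $\vec p = p_1,\ldots,p_n$ with $[\vec p\,]\defined$, say $[\vec p\,]=g$. By Definition~\ref{def-PARACAT3}(2), $\vec p\sim_\cS g$, so $\overline{\vec p}=\overline{g}=F(g)=F[\vec p\,]$. On the other hand, composition in $\PC/\cS$ is concatenation, so $[F\vec p\,]' = \overline{p_1}\semi\cdots\semi\overline{p_n} = \overline{\vec p}$. Hence $F[\vec p\,]=[F\vec p\,]'$ whenever the left side is defined, which is even stronger than $\kleeneleq$.

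For (iii), the strict monoidal identities $F(A\otimes B)=A\otimespp B$ and $F(I)=I$ hold on the nose by the definition of $\otimespp$ on objects. For arrows we must check $F(f\otimes g) = F(f)\otimespp F(g)$, i.e., $\overline{f\otimes g}=\overline{(f\otimes 1),(1\otimes g)}$. By Lemma~\ref{lem-PARACAT2}, the composite $[f\otimes 1,1\otimes g]$ is defined and equal to $f\otimes g$, so by Definition~\ref{def-PARACAT3}(2) the path $(f\otimes 1),(1\otimes g)$ is $\cS$-congruent to $f\otimes g$, yielding the needed equality of classes. The preservation of symmetry $F(\sigma_{A,B})=\overline{\sigma_{A,B}}$ is literally the definition of the symmetry in $\PC/\cS$ given in Proposition~\ref{prop-quotient-ssmpc}.

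Finally, for (iv), suppose $\cS$ is the smallest congruence $\sim$ and that $F(f)=F(g)$ for parallel arrows $f,g:A\to B$ of $\cC$. This means $\overline{f}=\overline{g}$, i.e., $f\sim g$ as paths of length one. By Corollary~\ref{cor-PARACAT6b}, this forces $f=g$, so $F$ is faithful. The routine work is in step (iii); the only conceptual step is recognizing that Definition~\ref{def-PARACAT3}(2) is precisely what makes $F$ turn defined partial composites into actual composites in the quotient, and the main obstacle — which is already resolved by the corollaries preceding this proposition — is the subtlety that length-one paths are never identified with anything they shouldn't be, preserved exactly by the minimality of $\sim$.
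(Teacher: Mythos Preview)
Your proof is correct and follows essentially the same approach as the paper: both use Definition~\ref{def-PARACAT3}(2) to show functoriality, Lemma~\ref{lem-PARACAT2} together with Definition~\ref{def-PARACAT3}(2) to verify strict preservation of the tensor, and Corollary~\ref{cor-PARACAT6b} for faithfulness when $\cS={\sim}$. The only minor addition in the paper is the parenthetical observation that faithfulness in fact holds for any congruence $\cS\subseteq\hS$, not just the smallest one.
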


\begin{proof}
  Observe that $F$ is indeed a functor of paracategories as in
  Definition~\ref{def-functor-of-paracat}: when $[\vec{f}\,]$ is
  defined, then by Definition~\ref{def-PARACAT3}(2) $[\vec{f}\,]\sim_{\cS}
  \vec{f}$, hence
  \[ F[\vec{f}\,] = \overline{[\vec{f}\,]} = \overline{\vec{f}} =
  \overline{f_1,\ldots,f_n} = \overline{f_1}\semi\ldots\semi\overline{f_n} =
  Ff_1\semi\ldots\semi Ff_n.
  \]
  Moreover, $F$ is strictly monoidal: by Lemma~\ref{lem-PARACAT2},
  Definition~\ref{def-PARACAT3}(2), Definition~\ref{def-pathtensor} and by
  definition of the tensor on $\PC$, we have
  \[ F(f\otimes g) = \overline{f\otimes g} = \overline{[f\otimes
    1_B,1_C\otimes g]} = \overline{f\otimes 1_B,1_C\otimes g} =
  \overline{f\otimesp g} = Ff\otimespp Fg.
  \]
  Also, trivially, $F(\sigma)=\overline\sigma$.

  For general $\cS$, the functor $F$ may not be faithful. For a
  trivial example, consider the maximal relation $\cS=\PC\times\PC$,
  which is always a congruence. However, if $\cS$ is the smallest
  congruence relation, then $F$ is faithful by
  Corollary~\ref{cor-PARACAT6b}. Indeed, by Remark~\ref{rem-PARACAT1},
  this is true for any congruence relation satisfying $\cS\subseteq
  \hS$.
\end{proof}

\subsection{Compact closed paracategories}

\begin{definition}
  \rm A \textit{(strict symmetric) compact closed paracategory}
  $(\cC,[-],\otimes,I,\sigma,\eta,\eps)$ is a strict symmetric
  monoidal paracategory, equipped for every object $A$ with a given
  object $A^*$ and given arrows $\eta_A:I\rightarrow A^*\otimes A$,
  $\eps_A :A\otimes A^*\rightarrow I$, such that
  \begin{itemize}
  \item $[1_A\x\eta_C, f\x 1_C]$, $[g\x 1_{C^*}, 1_B\x\eps_C]$,
    $[\eta_A\x 1_B,1_{A^*}\x h]$, and $[1_A\x k, \eps_A\x 1_C]$ are
    defined, for all $f:A\x C^*\to B$, $g:A\to B\x C$, $h:A\x B\to
    C$, and $k:B\to A^*\x C$ (totality);
  \item $[1_A\x \eta_A,\eps_A\x 1_A ]=1_A$ and $[\eta_A\x 1_{A^*},
    1_{A^*}\x \eps_A]=1_{A^*}$.
  \end{itemize}
\end{definition}

\begin{theorem}\label{thm-faithful-embed-comp-closed-para}
  If $\cC$ is a compact closed paracategory, then $\PC/\cS$ is a
  compact closed category. In particular, every compact closed
  paracategory can be faithfully embedded in a compact closed
  category.
\end{theorem}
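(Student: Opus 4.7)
The plan is to transfer the compact closed structure from $\cC$ to $\PC/\sim$ through the faithful functor $F:\cC\to\PC/\sim$ of Definition~\ref{def-f} (taking $\cS$ to be the smallest congruence $\sim$). On objects we keep the dual $A^*$ from $\cC$; for the unit and counit we take the length-one paths $F(\eta_A)=\overline{\eta_A}$ and $F(\eps_A)=\overline{\eps_A}$. The underlying strict symmetric monoidal structure on $\PC/\sim$ is already in place by Proposition~\ref{prop-quotient-ssmpc}, and $F$ is strict symmetric monoidal and faithful by Proposition~\ref{prop-PARACAT7}.

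The only nontrivial verification is the two triangle identities in $\PC/\sim$. I work out the first; the second is symmetric. In $\cC$ we are given that $[1_A\x\eta_A,\,\eps_A\x 1_A]$ is defined and equal to $1_A$. Applying axiom (2) of the congruence $\sim$ (Definition~\ref{def-PARACAT3}) to this length-two path, we obtain
\[
\overline{1_A\x\eta_A,\,\eps_A\x 1_A}=\overline{1_A}=1_A
\]
in $\PC/\sim$. Since composition in the quotient is concatenation of representatives, this reads $\overline{1_A\x\eta_A}\semi\overline{\eps_A\x 1_A}=1_A$. Because $F$ is strictly monoidal and the identity $1_A$ in $\PC/\sim$ equals $\overline{1_A}$, the left-hand side rewrites as $(1_A\otimespp\overline{\eta_A})\semi(\overline{\eps_A}\otimespp 1_A)$, which is exactly the first triangle identity in $\PC/\sim$. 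The second identity follows from $[\eta_A\x 1_{A^*},\,1_{A^*}\x\eps_A]=1_{A^*}$ by the same argument.

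The second sentence of the theorem (faithful embedding) is then immediate: the functor $F$ is faithful by Proposition~\ref{prop-PARACAT7}, strict symmetric monoidal, and by the construction just given it preserves the compact closed data on the nose, hence is a strict symmetric monoidal compact closed functor from $\cC$ into the compact closed category $\PC/\sim$. I do not expect any real obstacle: all the work has already been done in the development of the congruence $\sim$, and in particular axiom (2) of Definition~\ref{def-PARACAT3} is precisely the tool that turns the paracategorical triangle identities (which are statements about defined bracket-compositions in $\cC$) into honest equations of morphisms in the totally composed category $\PC/\sim$.
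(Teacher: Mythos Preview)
Your proof is correct and essentially identical to the paper's: both define $\eta'=\overline{\eta_A}$, $\eps'=\overline{\eps_A}$, and derive the triangle identities by applying congruence axiom~(2) to the defined composites $[1_A\x\eta_A,\eps_A\x 1_A]=1_A$ and $[\eta_A\x 1_{A^*},1_{A^*}\x\eps_A]=1_{A^*}$, then unwinding $\otimespp$ on length-one paths. The one cosmetic point is that the theorem's first sentence is stated for an arbitrary congruence $\cS$, not just the smallest one; since your triangle-identity argument uses only axiom~(2) of Definition~\ref{def-PARACAT3}, it applies verbatim to any $\cS$, and your restriction to $\sim$ is needed only (and correctly) for the faithfulness claim in the second sentence.
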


\begin{proof}
  We must show that $\PC/\cS$, with $\eta'=\overline{\eta}$ and
  $\eps'=\overline{\eps}$, is compact closed. This is easily
  verified. For example, the condition $[1\x\eta, \eps\x 1]\defined$
  implies:
  \begin{eqnarray*}
    1_A \otimespp \overline{\eta} \semi \overline{\eps} \otimespp 1_A
    &=& \overline{1_A\otimes\eta }\semi\overline{\eps\otimes 1_A}\\
    &=& \overline{1_A\otimes\eta ,\eps\otimes 1_A}\\
    &=& \overline{[1_A\otimes\eta ,\eps\otimes 1_A]}\\
    &=& \overline{1_A} = 1_A.
  \end{eqnarray*}
  The proof of $\overline{\eta}\otimespp 1_{A^*}\semi 1_{A^*}
  \otimespp\overline{\eps} = 1_{A^*}$ is similar.
\end{proof}

\begin{remark}\label{rem-trace-cc-paracat}
  By analogy with Proposition~\ref{prop-canonical-trace}, in any
  compact closed paracategory, we can define the {\em trace} of an
  arrow $f:A\x U\to B\x U$ to be
  \[ \Tr^U_{A,B}(f) ~\kleeneeq~ [\id_A\x\eta_U, \id_A\x\sigma_{U^*,U},
  f\x\id_{U^*}, \id_B\x\eps_U] : A \to B.
  \]
  Then $\Tr^U_{A,B}$ is of course a partially defined operation.
\end{remark}

Recall from Definition~\ref{def-sim} that $\sim$ is the smallest
congruence relation on $\PC$.

\begin{theorem}\label{thm-cc-trace}
  The functor $F:\cC\to\PC/{\sim}$ preserves and reflects the trace.
  This means that for all $f:A\x U\to B\x U$ and $g:A\to B$ in $\cC$,
  we have $\Tr^U(f)=g$ iff ${\Tr^{FU}}F(f)=F(g)$.
\end{theorem}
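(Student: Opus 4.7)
The plan is to unfold both sides of the equation ``$\Tr^U(f)=g$'' into paths in $\PC$ and then apply Corollary~\ref{cor-path-singleton}. Recall that by Remark~\ref{rem-trace-cc-paracat}, the equation $\Tr^U_{A,B}(f)=g$ holds in the compact closed paracategory $\cC$ if and only if the partial composition
\[
[1_A\x\eta_U,\; 1_A\x\sigma_{U^*,U},\; f\x 1_{U^*},\; 1_B\x\eps_U]
\]
is defined in $\cC$ and equals $g$. On the other side, by Theorem~\ref{thm-faithful-embed-comp-closed-para}, $\PC/{\sim}$ is a compact closed category with units $\overline{\eta_U}$ and counits $\overline{\eps_U}$, and by Proposition~\ref{prop-canonical-trace} its trace is the canonical one given by the same formula, interpreted in the totally defined composition of $\PC/{\sim}$.

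First, I would compute $\Tr^{FU}F(f)$ explicitly. Using the canonical trace formula together with the strict symmetric monoidality of $F$ (Proposition~\ref{prop-PARACAT7}) and the fact that identities in $\PC/{\sim}$ are represented by empty paths, a routine calculation gives
\[
\Tr^{FU}F(f) \;=\; \overline{1_A\x\eta_U,\; 1_A\x\sigma_{U^*,U},\; f\x 1_{U^*},\; 1_B\x\eps_U}
\]
as an equivalence class of paths in $\PC$. Since $F(g)=\overline{g}$, the equality $\Tr^{FU}F(f)=F(g)$ is equivalent to the congruence
\[
(1_A\x\eta_U,\; 1_A\x\sigma_{U^*,U},\; f\x 1_{U^*},\; 1_B\x\eps_U) \;\sim\; (g)
\]
of paths in $\PC$, where $(g)$ is the length-one path.

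Now I would invoke Corollary~\ref{cor-path-singleton}: because the right-hand side is a path of length one, the above congruence holds if and only if the partial composition of the four-term path on the left is defined in $\cC$ and equals $g$. Combining this with Remark~\ref{rem-trace-cc-paracat}, we conclude that $\Tr^{FU}F(f)=F(g)$ iff $\Tr^U(f)=g$, which gives both preservation and reflection simultaneously.

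The only real work is the unfolding step: verifying that $\Tr^{FU}F(f)$ really is the equivalence class of the four-arrow path, which requires some bookkeeping with $\otimespp$ (using Definition~\ref{def-pathtensor} and Lemma~\ref{lem-PARACAT2}) to absorb stray identity paths arising from $F(f)\otimespp 1_{U^*}$ and similar. Once this unfolding is in place, Corollary~\ref{cor-path-singleton} does all the essential work, and the rest is formal.
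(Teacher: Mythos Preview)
Your proposal is correct and follows essentially the same route as the paper: unfold $\Tr^{FU}F(f)$ as the equivalence class of the four-arrow path, then apply Corollary~\ref{cor-path-singleton} to reduce the equation $\Tr^{FU}F(f)=F(g)$ to the definedness and value of the partial composition in $\cC$, which is exactly $\Tr^U(f)=g$ by Remark~\ref{rem-trace-cc-paracat}. The paper's proof is simply a terser version of yours, omitting the $\otimespp$ bookkeeping you (rightly) flag as the only routine work.
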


\begin{proof}
  By definition, we have ${\Tr^{FU}}F(f)=F(g)$ in $\PC/{\sim}$ if and
  only if $\id_A\x\eta_U, \id_A\x\sigma_{U^*,U}, f\x\id_{U^*},
  \id_B\x\eps_U\sim g$ is an equivalence of paths in $\PC$. By
  Corollary~\ref{cor-path-singleton}, this is the case iff
  $[\id_A\x\eta_U, \id_A\x\sigma_{U^*,U}, f\x\id_{U^*},
  \id_B\x\eps_U]=g$ in $\cC$, i.e., $\Tr^U(f)=g$.
\end{proof}

\subsection{The universal property of $\PC/{\sim}$}

We can strengthen Proposition~\ref{prop-PARACAT7} by noting that the
faithful embedding satisfies a universal property when $\cS$ is the
smallest congruence relation.

\begin{theorem}\label{thm-freeness}
  Let $\cC$ be a strict symmetric monoidal paracategory, and let
  $\sim$ be the smallest congruence relation on $\PC$. Then the
  category $\PC/{\sim}$ satisfies the following property: for any
  strict symmetric monoidal category $\cD$ and any strict symmetric
  monoidal functor $G:\cC\rightarrow \cD$ of paracategories, there
  exists a unique strict symmetric monoidal functor
  $L:\PC/{\sim}\rightarrow \cD$ such that $L\circ F=G$, where $F$ is
  the canonical functor as in Definition~\ref{def-f}.
  \[\xymatrix@=25pt{
    \cC\ar[rr]^{F}\ar[rrd]_{G}
    && \PC/{\sim} \ar[d]^{L}\\
    && \cD }
  \]
\end{theorem}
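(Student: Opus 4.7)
The plan is to define $L$ directly on paths modulo $\sim$ and verify that this definition is forced. On objects, I set $L(A) := G(A)$, which is obligatory since $F$ is the identity on objects. On morphisms, given a path $\vec f = f_1,\ldots,f_n : A\to B$ in $\PC$, I let
\[
  L\bigl(\overline{\vec f}\bigr) := G(f_n)\circ \ldots\circ G(f_1),
\]
interpreted in $\cD$, with $L(\overline{\epsilon_A}) := 1_{GA}$. This is the only possible definition consistent with $L\circ F=G$ and strict monoidality, because every morphism of $\PC/{\sim}$ is a $\semi$-composite of length-one classes $\overline{f_i} = F(f_i)$; so uniqueness follows immediately once $L$ is shown to be a well-defined strict symmetric monoidal functor.

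The main step is well-definedness: if $\vec p\sim\vec q$ then the compositions in $\cD$ agree. To prove this, define a relation $\approx$ on $\PC$ by $\vec p\approx\vec q$ iff the two paths, interpreted as composites via $G$ in $\cD$, give the same morphism. I will verify that $\approx$ is a congruence relation in the sense of Definition~\ref{def-PARACAT3}; since $\sim$ is the smallest such, this forces $\sim\,\subseteq\,\approx$, giving well-definedness. Axiom~(1) of a congruence is immediate from associativity of composition in $\cD$. Axiom~(3) uses that $G$ is strict monoidal, so $G(p_i\otimes 1)=G(p_i)\otimes 1$, together with the interchange law in $\cD$. Axiom~(2) is the only interesting one: if $[\vec p\,]\defined$ in $\cC$, then because $G$ is a functor of paracategories,
\[
  G\bigl[\vec p\,\bigr]\kleeneleq \bigl[G\vec p\,\bigr]' = G(p_n)\circ\ldots\circ G(p_1),
\]
and the left-hand side is defined since $\cD$ is a total category, so $\vec p\approx [\vec p\,]$ as required. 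The potentially delicate point here is invoking the functor-of-paracategories condition of Definition~\ref{def-functor-of-paracat} correctly; that is where the hypothesis on $G$ is used essentially.

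Once $L$ is known to be a well-defined functor, the remaining checks are routine. Strict monoidality on objects holds by construction since $G$ is strict monoidal. On morphisms, using Definition~\ref{def-pathtensor} and the interchange law in $\cD$,
\[
  L\bigl(\overline{\vec f}\otimespp\overline{\vec g}\bigr)
  = L\bigl(\overline{\vec f\otimesp\vec g}\bigr)
  = \bigl(1\otimes L(\overline{\vec g})\bigr)\circ \bigl(L(\overline{\vec f})\otimes 1\bigr)
  = L(\overline{\vec f})\otimes L(\overline{\vec g}),
\]
and $L(\overline{\sigma_{A,B}}) = G(\sigma_{A,B}) = \sigma_{GA,GB}$ since $G$ is strict symmetric monoidal. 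Functoriality (preservation of $\semi$ and identities) is direct from the definition. Finally $L\circ F = G$ holds on objects trivially and on a morphism $f$ because $LF(f) = L(\overline{f}) = G(f)$, completing the construction; uniqueness was noted above.
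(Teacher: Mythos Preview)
Your proof is correct and follows essentially the same approach as the paper: both define $L$ by sending $\overline{\vec p}$ to the composite $G(p_n)\circ\cdots\circ G(p_1)$ in $\cD$, establish well-definedness by showing that the relation ``same image under this assignment'' (your $\approx$, the paper's $\cS$) is a congruence on $\PC$ and hence contains $\sim$, and then check the strict symmetric monoidal functor conditions and uniqueness directly. One minor wording quibble: in your verification of axiom~(2), the left-hand side $G[\vec p\,]$ is defined because $[\vec p\,]$ is assumed defined, not because $\cD$ is total; totality of $\cD$ is what lets you identify $[G\vec p\,]'$ with the ordinary composite.
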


\begin{proof}
  For consistency of notation, let us write ``$\semi$'' for
  composition in $\cD$ in diagrammatic order. Define a family of
  relations $\cS$ on $\PC$ by:
  \[ \vec{f} \sim_{\cS} \vec{g}
  \quad :\Longleftrightarrow \quad
  G(f_1)\semi\dots\semi
  G(f_n)=G(g_1)\semi\dots\semi G(g_m),
  \]
  where $\vec{f}=f_1,\ldots,f_n$ and $\vec{g}=g_1,\ldots,g_m.$ We
  claim that $\cS$ is a congruence relation. Clearly, it is an
  equivalence relation. Properties (1) and (3) of
  Definition~\ref{def-PARACAT3} are trivialities; for (2), note that when
  $[\vec f\,]\defined$, then $G[\vec f\,] = Gf_1\semi\ldots\semi Gf_n$ by
  Definition~\ref{def-functor-of-paracat}, hence $[\vec f\,]\sim_{\cS} \vec f$.

  We define $L$ as follows:
  \begin{center}
    $L(A)=G(A)$ on objects and
    $L(\bar{\vec{p}}\,)=G(p_1)\semi\dots\semi G(p_n) $, where
    $\vec{p}=p_1,\dots , p_n$.
  \end{center}
  $L$ is well-defined because $\vec{p}\sim\vec{q}$ implies
  $\vec{p}\sim_{\cS}\vec{q}$, and this implies
  $L(\bar{\vec{p}}\,)=L(\bar{\vec{q}}\,)$.  $L$ is easily seen to be a
  strict symmetric monoidal functor satisfying $L\circ F=G$.

  For uniqueness, consider any other such functor $L'$. Then $L'(A) =
  L'(FA) = GA = LA$ and
  $L'(\bar{\vec{p}}\,) = L'(\bar p_1\semi\ldots\semi\bar p_n)
  = L'(Fp_1,\ldots,Fp_n)
  = L'(Fp_1)\semi\ldots\semi L'(Fp_n)
  = G(p_1)\semi\ldots\semi G(p_n) = L(\bar{\vec{p}}\,)$, so $L'=L$.
\end{proof}

An analogous result holds with respect to compact closed
paracategories and compact closed categories.

\section{The $\Int$-construction for partially traced categories}
\label{sec-pint}

Joyal, Street, and Verity proved in \cite{JSV96} that every (totally)
traced monoidal category $\cC$ can be faithfully embedded in a compact
closed category $\Int(\cC)$. Here we show, by a similar construction,
that every {\em partially} traced category $\cC$ can be faithfully
embedded in a compact closed {\em paracategory} $\IntpC$. We call
the corresponding construction the {\em partial $\Int$-construction}.
We assume without loss of generality that $\cC$ is strictly monoidal.

\subsection{The definition of $\IntpC$}

\begin{definition}
  \rm To any partially traced symmetric strictly monoidal category
  $\cC$, we associate a graph $\IntpC$ as follows.
  \begin{itemize}
  \item an object is a pair $(\Ap,\Am)$ of objects of the category $\cC$.
  \item an arrow $f:(\Ap_0,\Am_0)\rightarrow (\Ap_1,\Am_1)$ is an
    arrow $f:\Ap_0\otimes \Am_1\rightarrow \Ap_1\otimes \Am_0$ in the
    category $\cC$.
  \end{itemize}
\end{definition}

To make $\IntpC$ into a paracategory, we need to define a partial
composition operation $[-]$ on paths. Before giving the formal
definition, we first illustrate the idea in the case of a path $\vec p
= p_1,p_2,p_3$ of length 3, where
\[ \pmobj{0}\catarrow{p_1}
\pmobj{1}\catarrow{p_2}
\pmobj{2}\catarrow{p_3}
\pmobj{3}.
\]
In this case, the partial composition $[\vec p\,]:\pmobj{0}\to\pmobj{3}$
is defined as follows:
\begin{equation}\label{eqn-partial-comp-len-3}
  [\vec p\,]:\kleeneeq
  \mp{0.25}{\includegraphics[scale=.75]{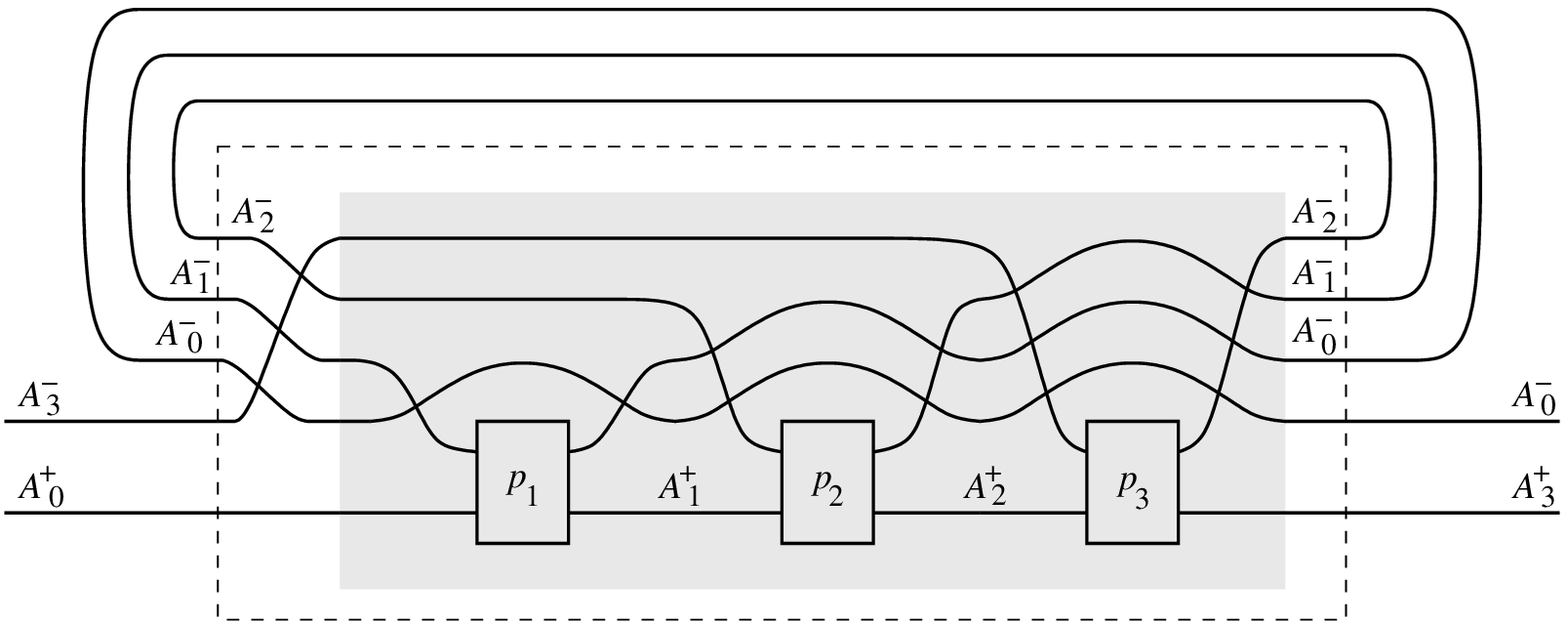}}
\end{equation}
See Section~\ref{ssec-graphical-partial} for our conventions regarding
the graphical language. In particular, the trace shown is a single
trace over the object $A_0^-\x A_1^-\x A_2^-$. Note that this trace may
be undefined, and therefore $[\vec p\,]$ is a partial operation.

To define $[\vec p\,]$ for paths of arbitrary length, we give a
recursive definition. We first recursively define an auxiliary
operation, corresponding to the contents of the shaded area in
(\ref{eqn-partial-comp-len-3}).

\begin{definition}
  We define an auxiliary (total) operation $\sem{-}$, called {\em
    precomposition}. This operation assigns to each path $\vec
  p=p_1,\ldots,p_n:\pmobj{0}\to\pmobj{n}$, with $n\geq 0$ and
  $p_i:\pmobj{i-1}\to\pmobj{i}$, a morphism
  \[ \sem{\vec p} : \Ap_0\x \vecc \Am\x \Am_n \to \Ap_n\x \Am_0\x\vecc\Am,
  \]
  where $\vecc\Am = \Am_0\x\ldots\x\Am_{n-1}$.
  Precomposition is defined by recursion on paths. The base case is a
  path of length $0$:
  \[ \sem{\eps_{\pmobj{0}}} ~=~ \id_{\Ap_0\x\Am_0} \quad=\quad
  \mp{.4}{\includegraphics[scale=0.5]{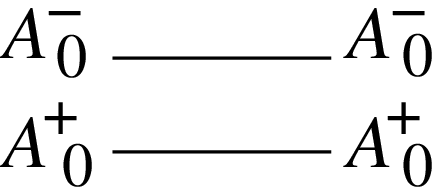}~.}
  \]
  And when $\vec p=p_1,\ldots,p_n$ as above is a path of length $n$,
  we define
  \[ \sem{\vec p,p_{n+1}} \quad=\quad
  \mp{.4}{\includegraphics[scale=0.5]{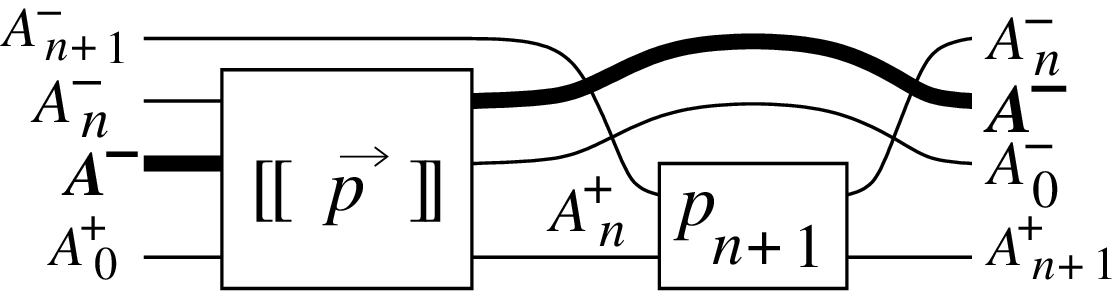}~.}
  \]
  Here, a thick line represents the object $\vecc\Am$, which really
  consists of $n$ parallel lines.
\end{definition}

\begin{definition}\label{def-intp-comp}
  For any path $\vec p=p_1,\ldots,p_n$, with $n\geq 0$ and
  $p_i:\pmobj{i-1}\to\pmobj{i}$, the partial composition $[\vec p\,]$ is
  defined as
  \[ [\vec p\,] ~:\kleeneeq~
  \Tr^{\vecc\Am}(\sem{\vec p}\circ(\Ap_0\x\sym_{\Am_n,\vecc\Am}))
  \quad\kleeneeq\quad
  \mp{.3}{\includegraphics[scale=0.5]{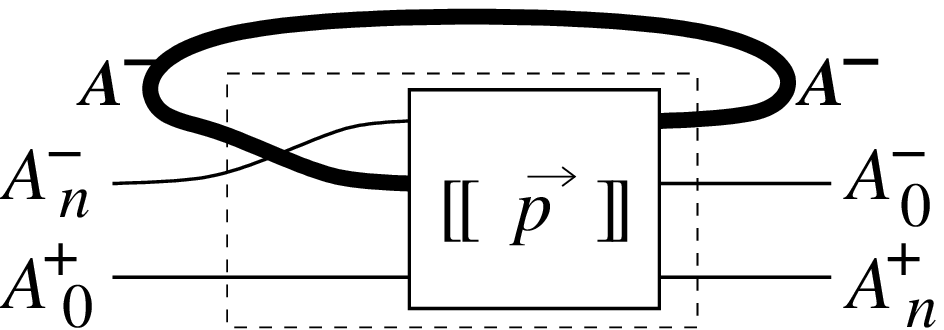}~.}
  \]
\end{definition}

The reader is invited to verify that in case $n=3$, this definition
indeed coincides with (\ref{eqn-partial-comp-len-3}).

\subsection{$\IntpC$ is a paracategory}

We start with a lemma that will be useful in the proof of the
paracategory properties for $\IntpC$. 

\begin{lemma}\label{lem-path-pq}
  For all paths $\vec p:\pmobjl{A}\to\pmobjl{B}$ and $\vec
  q:\pmobjl{B}\to\pmobjl{C}$,
  \[ \sem{\vec p,\vec q} \quad\kleeneeq\quad
  \mp{.4}{\includegraphics[scale=0.5]{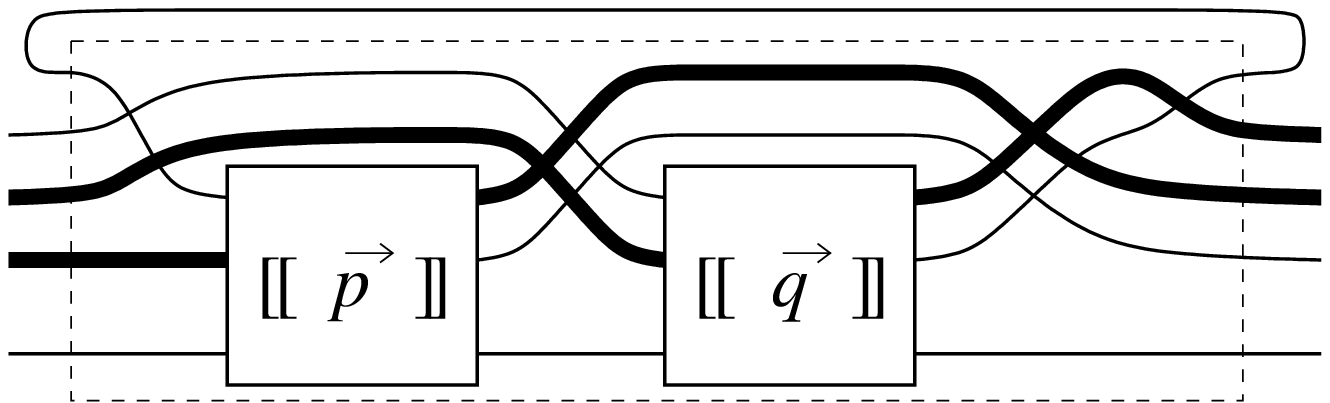}}~.
  \]
  In particular, the diagram is always defined.
\end{lemma}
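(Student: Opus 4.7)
My plan is to prove the lemma by induction on the length $n$ of the path $\vec q$. Since the precomposition operation $\sem{-}$ is by its recursive definition total (no partial trace is invoked in its construction), both sides of the asserted Kleene equality are genuine, everywhere defined morphisms in the underlying symmetric monoidal category $\cC$; in particular this establishes the ``in particular, the diagram is always defined'' clause. The equation then reduces to an ordinary equality of morphisms of $\cC$, and by the coherence theorem for symmetric monoidal categories it is enough to manipulate the diagrams up to isomorphism in the graphical calculus.

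For the base case $n=0$, $\vec q = \eps_{\pmobjl{B}}$, so $\sem{\vec q}$ is the identity on $B^+\x B^-$ and the bundle $\vecc{\Bm}$ is empty; the right-hand diagram, with an identity inserted for the $\sem{\vec q}$ box and an empty bundle $\vecc{\Bm}$, collapses to $\sem{\vec p}$ itself, which is exactly $\sem{\vec p,\eps_{\pmobjl{B}}}$. For the inductive step, suppose the lemma holds for $\vec q$ of length $n$ and extend it by one more arrow $q_{n+1}$. Write the left-hand side as $\sem{(\vec p,\vec q),q_{n+1}}$ and unfold one step of the recursive definition of $\sem{-}$, which exposes $\sem{\vec p,\vec q}$ sitting inside a wiring pattern involving $q_{n+1}$; substitute using the inductive hypothesis so that $\sem{\vec p}$ and $\sem{\vec q}$ appear explicitly. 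On the right-hand side, unfold the recursive definition of $\sem{-}$ applied to $\sem{\vec q,q_{n+1}}$, exposing $\sem{\vec q}$ next to $q_{n+1}$. What remains is a diagrammatic rearrangement using naturality of the symmetries and associativity of $\x$, which slots the wire freshly contributed by $q_{n+1}$ into the correct position of the bundle $\vecc{\Bm}$ on both sides, so the two diagrams match.

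The main obstacle will be the bookkeeping of the wire bundles $\vecc{\Am}$ and $\vecc{\Bm}$ and the symmetries that permute their components, since the recursive definition of $\sem{-}$ prepends new wires in a specific order, and one must match this to the order implicit in the right-hand diagram. Once that bookkeeping is carried out carefully, the verification is a routine application of symmetric monoidal coherence within $\cC$.
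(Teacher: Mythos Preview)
Your proposal rests on a misreading of the right-hand side. You assume the diagram is a purely symmetric monoidal composite of $\sem{\vec p}$ and $\sem{\vec q}$, so that ``both sides of the asserted Kleene equality are genuine, everywhere defined morphisms'' and the claim reduces to symmetric monoidal coherence. But the right-hand diagram contains a \emph{partial trace}: the $B^-$ wire leaving $\sem{\vec q}$ is fed back into the $B^-$ input of $\sem{\vec p}$ via a loop (drawn in the paper with a dashed box, per the conventions of Section~\ref{ssec-graphical-partial}). That is precisely why the lemma ends with ``In particular, the diagram is always defined'' --- this is a nontrivial consequence, not a triviality. Accordingly, the paper only argues the direction $\kleeneleq$, starting from the always-defined left-hand side.

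This invalidates both halves of your induction. In the base case with $\vec q=\eps$, the bundle $\vecc{\Bm}$ is empty but the feedback loop on $B^-$ remains; the right-hand side is $\Tr^{B^-}$ of a morphism built from $\sem{\vec p}$ and a symmetry, and one needs yanking (plus strength and naturality of the partial trace) to identify it with $\sem{\vec p}$. In the inductive step the unfolding you describe is correct, but ``a diagrammatic rearrangement using naturality of the symmetries and associativity of $\x$'' is not enough: you must enlarge the dashed box around the trace using superposing and naturality of the partial trace (directed Kleene inequalities) to match the two sides. Your appeal to the symmetric monoidal coherence theorem does not apply inside a partial-trace box.
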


\begin{proof}
  Since the left-hand side is always defined, it suffices to prove
  ``$\kleeneleq$''. We do this by induction on $\vec q$. For the base
  case, we have by yanking, strength, and naturality:
  \[  \sem{\vec p}
  ~\kleeneleq~
  \mp{.3}{\includegraphics[scale=0.5]{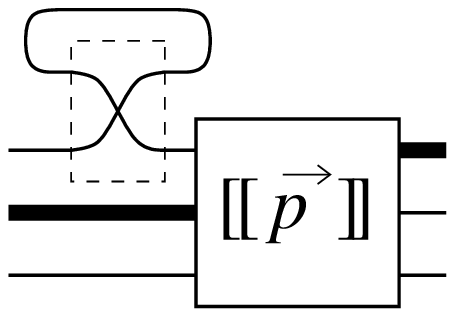}}
  ~\kleeneleq~
  \mp{.3}{\includegraphics[scale=0.5]{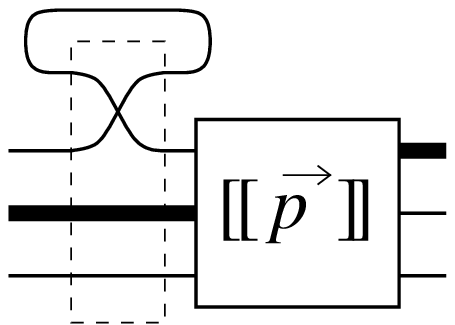}}
  ~\kleeneleq~
  \mp{.3}{\includegraphics[scale=0.5]{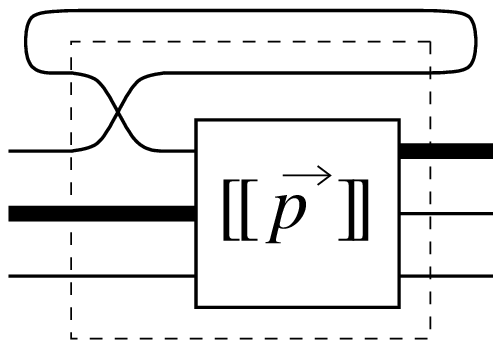}}
  \]\[
  ~\kleeneeq~
  \mp{.3}{\includegraphics[scale=0.5]{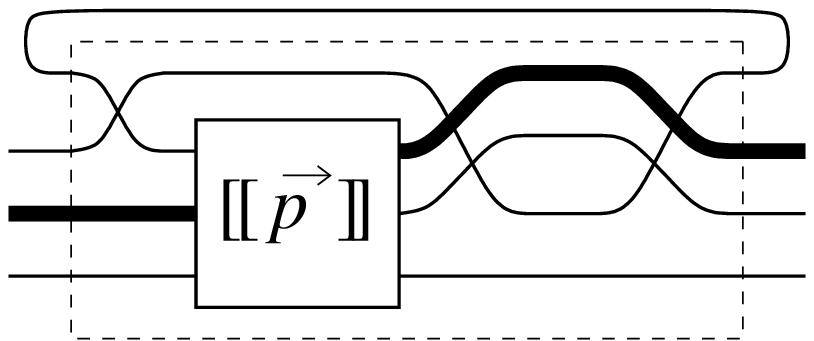}}
  ~\stackrel{\rm(def)}{\kleeneeq}~
  \mp{.3}{\includegraphics[scale=0.5]{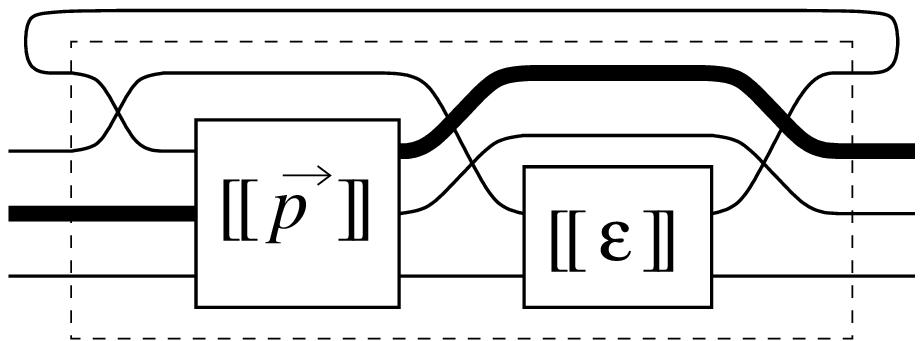}}.
  \]
  For the induction step, we have by superposing and naturality:
  \[
  \sem{\vec p,\vec q,q_{n+1}}
  ~\stackrel{\rm(def)}{\kleeneeq}~
  \mp{.3}{\includegraphics[scale=0.375]{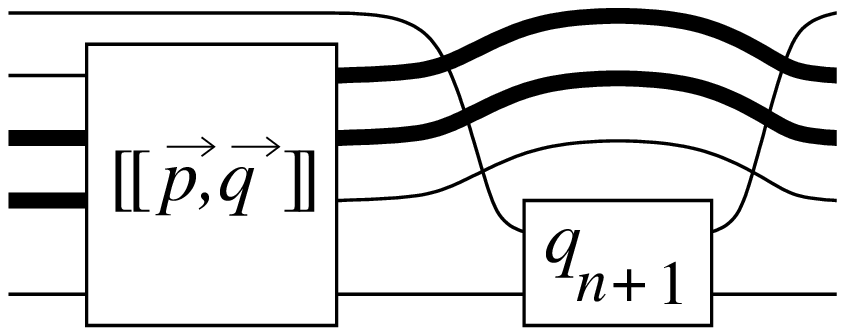}}
  ~\stackrel{\rm(ind.hyp.)}{\kleeneeq}~
  \mp{.3}{\includegraphics[scale=0.375]{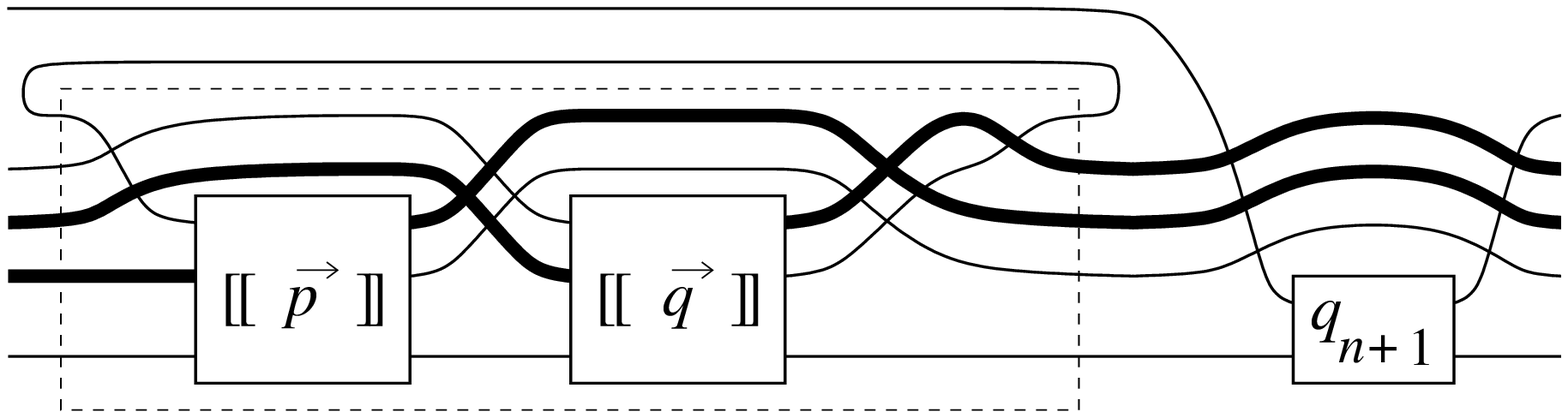}}
  \]\[
  ~\kleeneleq~
  \mp{.3}{\includegraphics[scale=0.375]{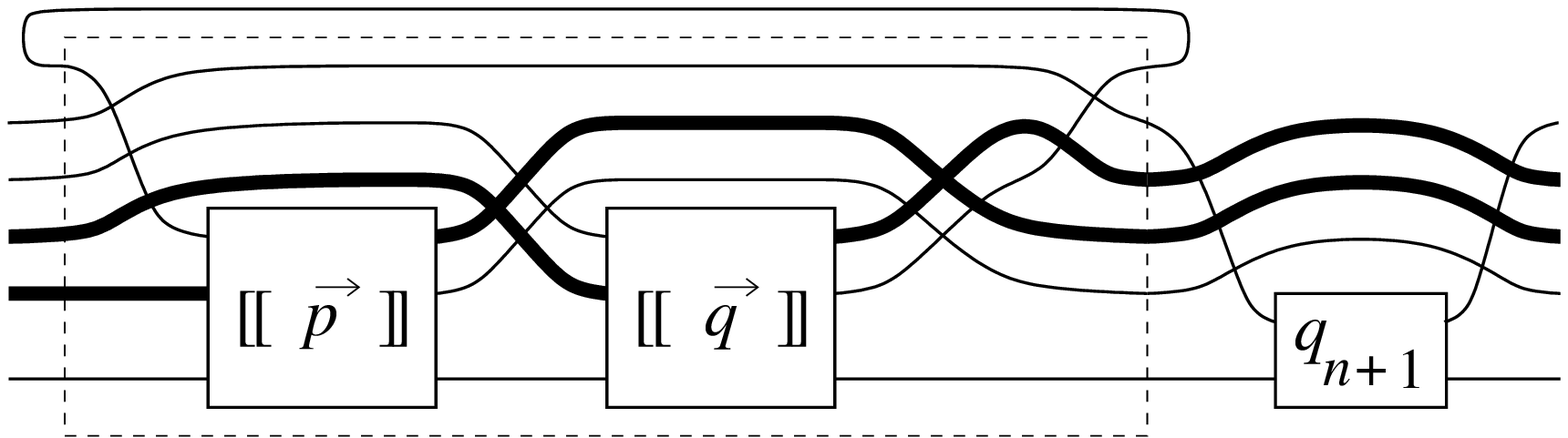}}
  ~\kleeneleq~
  \mp{.3}{\includegraphics[scale=0.375]{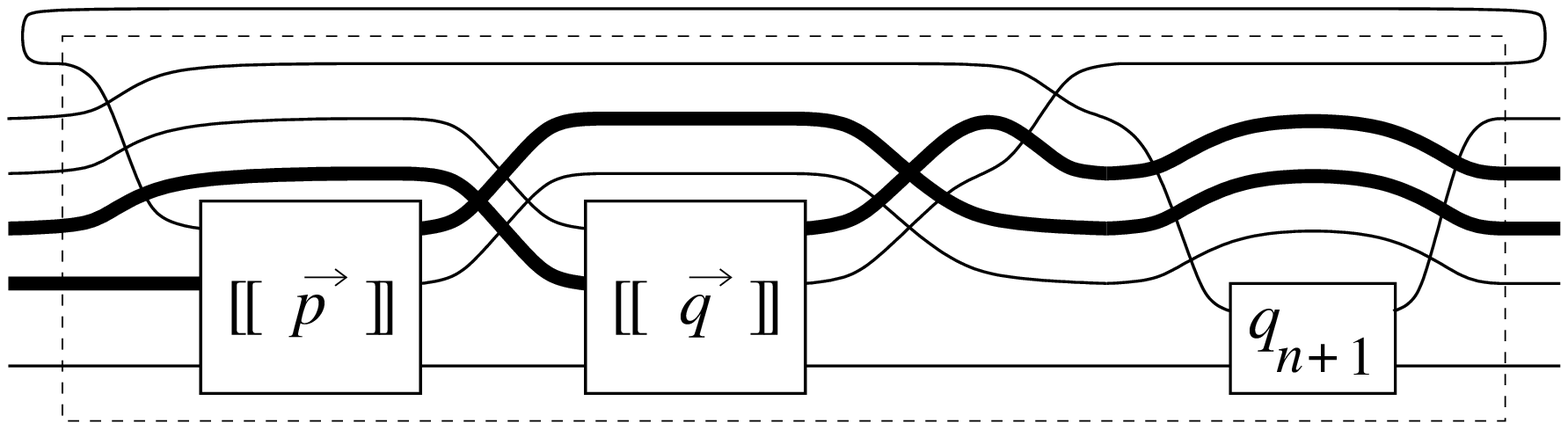}}
  \]\[
  ~\kleeneeq~
  \mp{.3}{\includegraphics[scale=0.375]{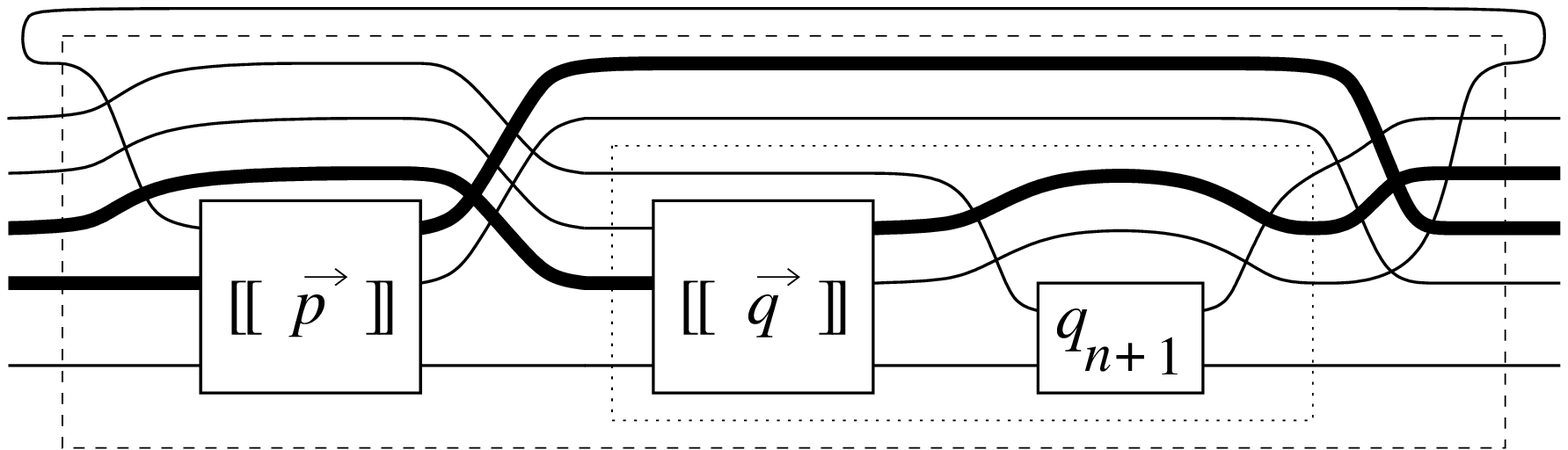}}
  ~\stackrel{\rm(def)}{\kleeneeq}~
  \mp{.3}{\includegraphics[scale=0.375]{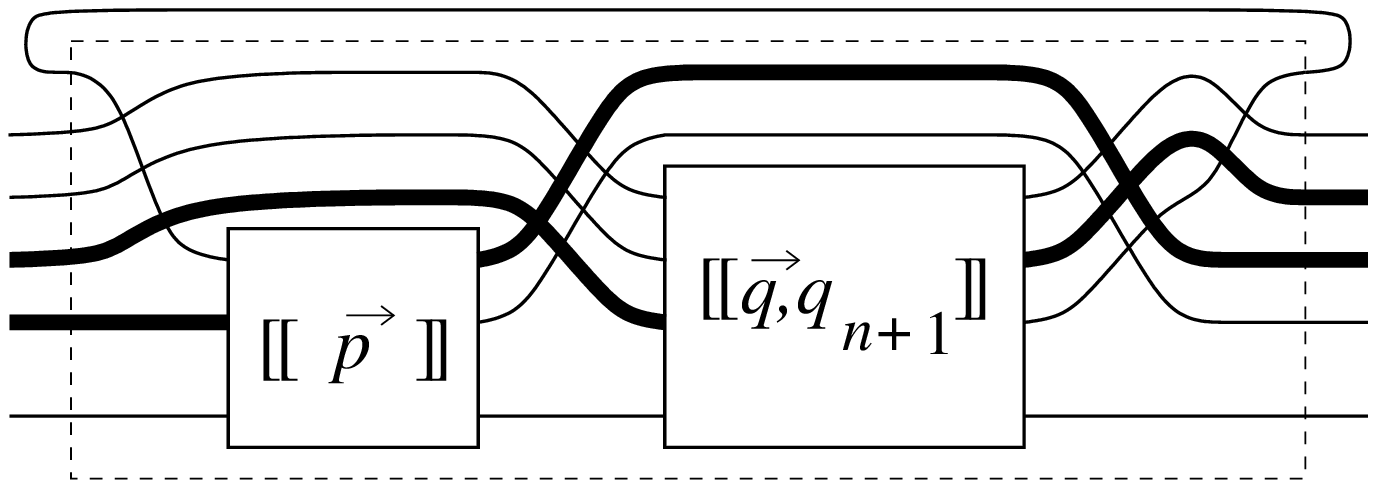}}.
  \]
\end{proof}

The above proof illustrates that the strength, superposing, and
naturality axioms all serve to ``enlarge'' the dashed boxes under
directed Kleene equality. To save space, in the following we often
combine these axioms, as well as the left-to-right direction
of vanishing II, into a single graphical step.

\begin{lemma}\label{lem-intpc-paracat}
  Let $\cC$ be a partially traced symmetric (strictly) monoidal
  category. With the partial composition $[-]$ defined in
  Definition~\ref{def-intp-comp}, $\IntpC$ is a paracategory.
\end{lemma}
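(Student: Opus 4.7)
The plan is to verify the three axioms of Definition~\ref{def-paracat} for the operation $[-]$ defined in Definition~\ref{def-intp-comp}.

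For axiom (a), the empty path $\epsilon_{(\Ap,\Am)}$ has $\sem{\epsilon} = \id_{\Ap\otimes\Am}$ and $\vecc\Am = I$ (the empty tensor). Hence $[\epsilon_{(\Ap,\Am)}] = \Tr^I(\id)$, which by vanishing I is defined and equals $\id_{\Ap\otimes\Am}$. So identities exist and serve, moreover, as identities in $\IntpC$ (this will be needed for composing with them transparently). For axiom (b), given a single arrow $f:\pmobj{0}\to\pmobj{1}$, one unfolds $\sem{\epsilon,f}$ via the recursive clause, composes with $\sym$, and then applies yanking together with naturality/strength to collapse the trace to $f$ itself; equivalently, one can deduce this directly from Lemma~\ref{lem-path-pq} with $\vec p = \epsilon$ and $\vec q = f$, followed by vanishing~I.

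Axiom (c) is the main obstacle. Suppose $\vec r:\pmobjl{A}\to\pmobjl{B}$, $\vec f:\pmobjl{B}\to\pmobjl{C}$, $\vec s:\pmobjl{C}\to\pmobjl{D}$ and that $[\vec f\,]$ is defined. The trace defining $[\vec r,\vec f,\vec s\,]$ is over $\vecc\Am = \vecc\Am_r \otimes B^- \otimes \vecc\Am_f \otimes C^- \otimes \vecc\Am_s$ (for an appropriate grouping matching the three sub-paths; note $\vecc\Am_f$ collects the internal negative components strictly inside $\vec f$, while $B^-$ and $C^-$ are the boundary components). The strategy is: apply Lemma~\ref{lem-path-pq} twice to factor $\sem{\vec r,\vec f,\vec s}$ so that the sub-diagram corresponding to $\sem{\vec f\,}$ appears as an identifiable block; then, using Remark~\ref{rem-naturality} (to conjugate freely by symmetry isomorphisms), reshuffle the traced wires so that the traced object is presented in the form $(\vecc\Am_r \otimes \vecc\Am_s)\otimes \vecc\Am_f$; finally invoke vanishing~II to split this trace as an inner trace over $\vecc\Am_f$ followed by an outer trace over $\vecc\Am_r\otimes\vecc\Am_s$.

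The inner trace, up to the symmetries just introduced, is exactly the defining expression of $[\vec f\,]$ by Definition~\ref{def-intp-comp}. Since $[\vec f\,]$ is defined by hypothesis, the inner trace is defined, so the hypothesis of the right-to-left direction of vanishing~II is satisfied; Kleene equality is preserved throughout by the remark following Definition~\ref{def-partial-trace}. Substituting $[\vec f\,]$ into the outer trace and reading off the resulting diagram, one recognizes (again by Lemma~\ref{lem-path-pq}, now applied to the two-step decomposition of $\vec r,[\vec f\,],\vec s$) that this is precisely $[\vec r,[\vec f\,],\vec s\,]$, giving the required Kleene equality.

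The real difficulty lies in bookkeeping: ensuring that the many symmetry permutations used to bring $\vecc\Am$ into the shape required by vanishing~II cancel correctly against those hidden inside the definitions of $\sem{-}$ and $[-]$. This is handled by systematically using the graphical conventions of Section~\ref{ssec-graphical-partial} — every rearrangement inside a trace-box is a symmetric monoidal manipulation, hence sound — combined with dinaturality to slide symmetries across traced wires, and Remark~\ref{rem-naturality} to keep everything Kleene-equal rather than merely Kleene-$\leq$.
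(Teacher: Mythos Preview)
Your proposal is correct and follows the same route as the paper: (a) by vanishing~I, (b) by yanking plus naturality/strength, and (c) by factoring via Lemma~\ref{lem-path-pq}, reshuffling by dinaturality, and applying vanishing~II to isolate $[\vec f\,]$. The paper organises (c) slightly differently: rather than manipulating the partially defined $[\vec r,\vec f,\vec s\,]$ directly, it first rewrites the two \emph{total} precompositions $\sem{\vec p,\vec q,\vec r}\circ(\id\x\sym)$ and $\sem{\vec p,[\vec q\,],\vec r}\circ(\id\x\sym)$ (via Lemma~\ref{lem-path-pq}, superposing, naturality, and the left-to-right direction of vanishing~II) into two intermediate traced diagrams that are guaranteed defined, and only then takes the final outer trace and compares them via vanishing~II and dinaturality --- this is precisely what absorbs the wire-bookkeeping you correctly flag as the main difficulty.
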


\begin{proof}
  \begin{itemize}
  \item[(a)] By vanishing I, it follows immediately that
    $[\epsilon_{\pmobj{}}] = \sem{\epsilon_{\pmobj{}}} =
    \id_{\pmobj{}}$. In particular, $[\epsilon_{\pmobj{}}]\defined$.
  \item[(b)] For a path $f:\pmobj{0}\to\pmobj{1}$ of length 1, we have
    by yanking, strength, and naturality:
    \[
    \raisebox{-.8em}{\includegraphics[scale=0.5]{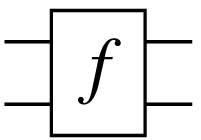}}
    ~\kleeneeq~
    \raisebox{-.8em}{\includegraphics[scale=0.5]{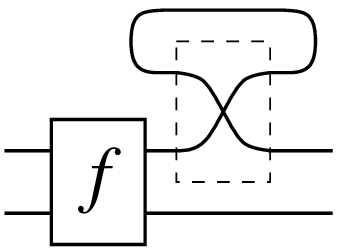}}
    ~\kleeneleq~
    \raisebox{-1em}{\includegraphics[scale=0.5]{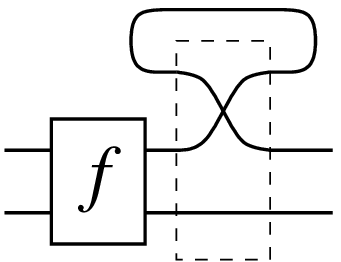}}
    ~\kleeneleq~
    \raisebox{-1em}{\includegraphics[scale=0.5]{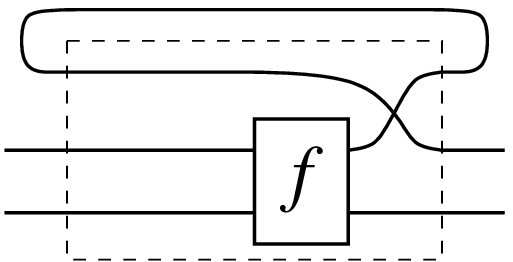}}
    ~\kleeneeq~
    \raisebox{-1em}{\includegraphics[scale=0.5]{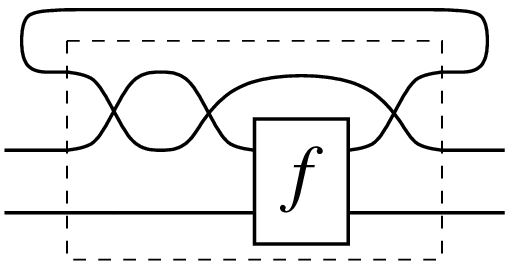}}
    ~\stackrel{\rm(def)}{\kleeneeq}~
    [f].
    \]
    In particular, the right-hand side is defined.
  \item[(c)] We must show that whenever $[\vec{q}\,]$ is defined, then
    $[\vec{p},[\vec{q}\,],\vec{r}\,]\kleeneeq[\vec{p},\vec{q},\vec{r}\,]$.
    First, by Lemma~\ref{lem-path-pq}, superposing, naturality, and
    vanishing II, we have
    \[ \sem{\vec{p},\vec{q},\vec{r}} \circ (\id\x\sym)
    ~\kleeneeq~
    \mp{0.4}{\includegraphics[scale=0.35]{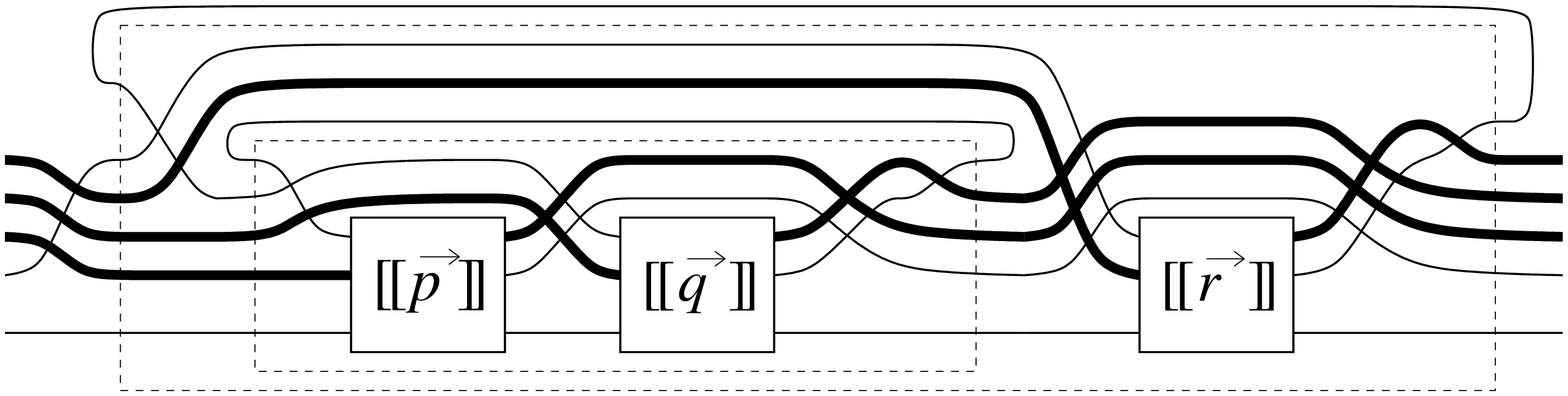}}
    \]\begin{equation}\label{eqn-intpc-paracat-c2}
      ~\kleeneleq~
      \mp{0.4}{\includegraphics[scale=0.35]{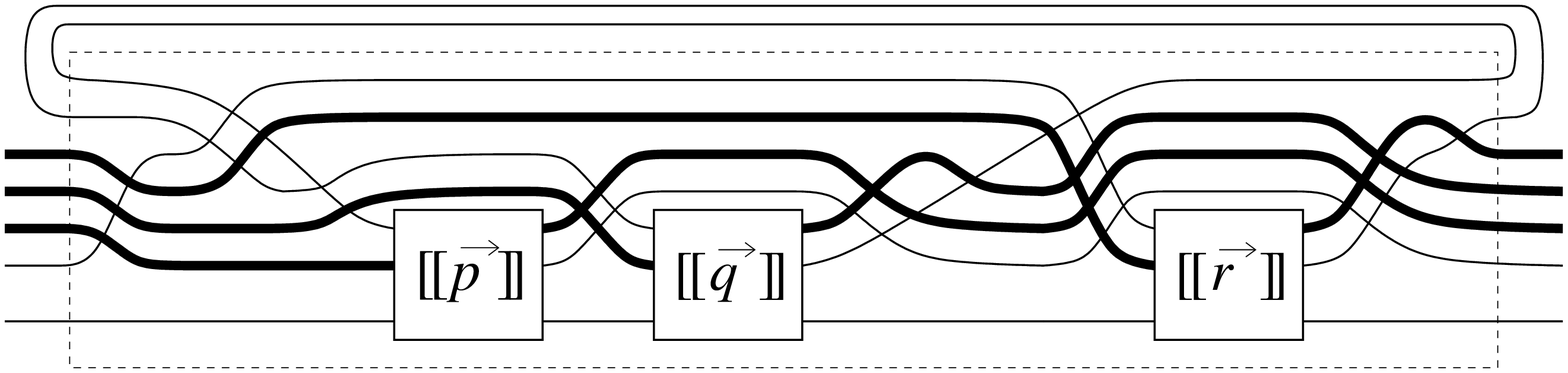}}~.
    \end{equation}
    Second, assume that $[\vec q\,]$ is defined. By definition of $[\vec
    q\,]$, Lemma~\ref{lem-path-pq}, superposing, naturality, and
    vanishing II, we have
    \[ \sem{\vec{p},[\vec{q}\,],\vec{r}} \circ (\id\x\sym)
    ~\kleeneeq~
    \mp{0.4}{\includegraphics[scale=0.35]{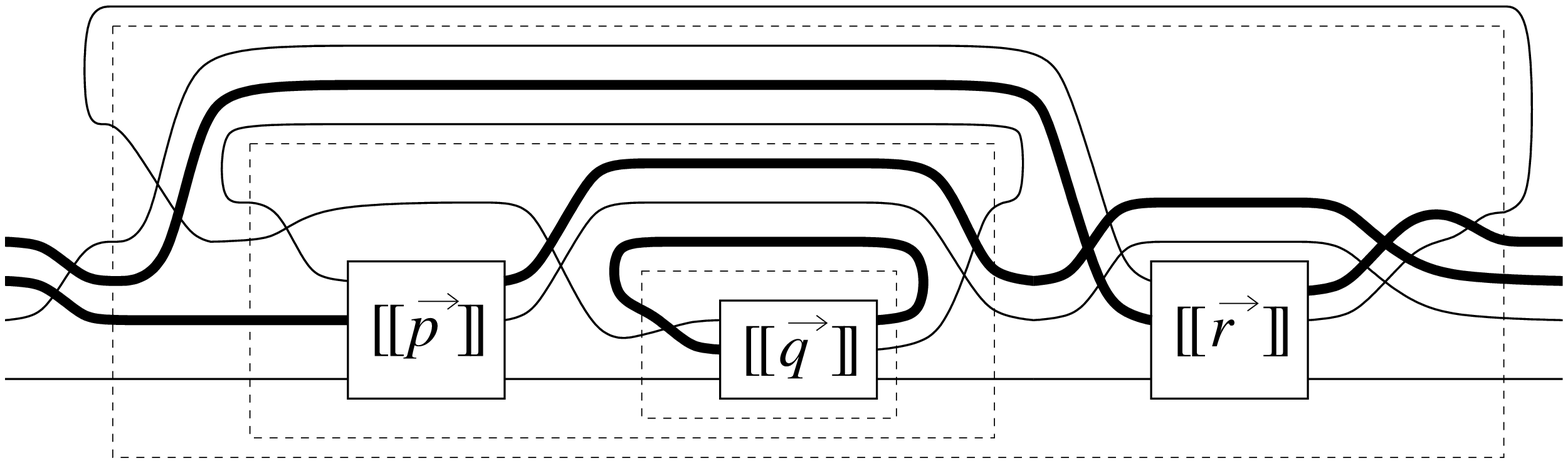}}
    \]\begin{equation}\label{eqn-intpc-paracat-c4}
      ~\kleeneleq~
      \mp{0.4}{\includegraphics[scale=0.35]{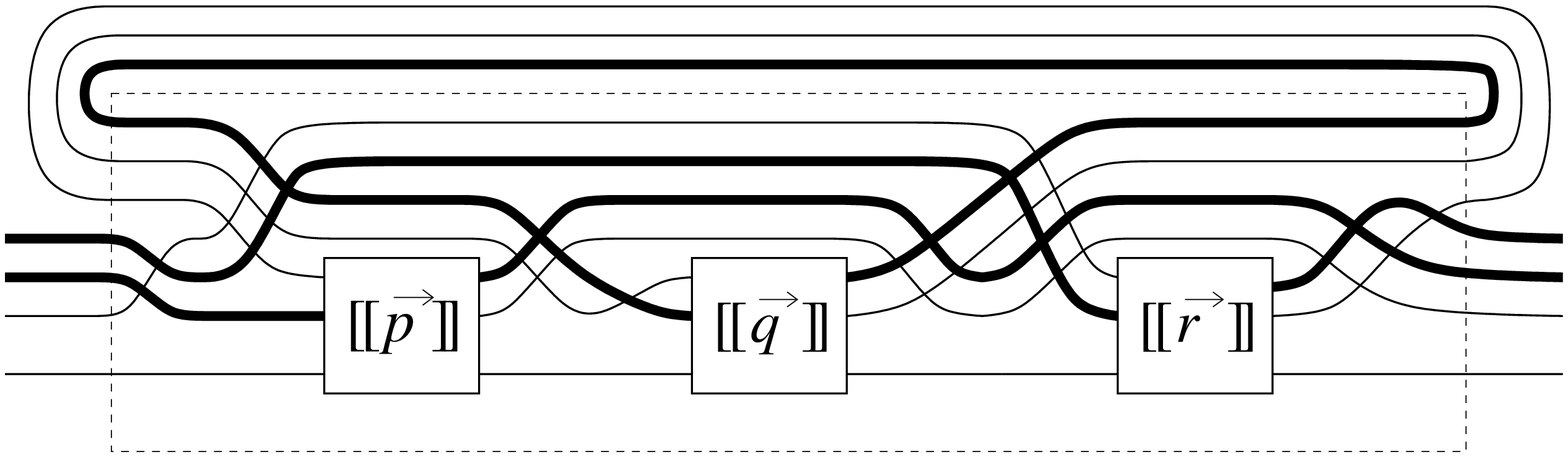}}~.
    \end{equation}
    Note that every morphism mentioned so far is defined. Recall that
    by definition, $[\vec{p},\vec{q},\vec{r}\,]$ and
    $[\vec{p},[\vec{q}\,],\vec{r}\,]$ are the trace of
    (\ref{eqn-intpc-paracat-c2}) and (\ref{eqn-intpc-paracat-c4}),
    respectively, where the trace is taken on the ``fat'' wires.  The
    fact that $[\vec{p},\vec{q},\vec{r}\,]\kleeneeq
    [\vec{p},[\vec{q}\,],\vec{r}\,]$ then follows immediately from
    vanishing II and dinaturality.
\end{itemize}
\end{proof}

\begin{lemma}\label{lem-comp-fg}
  For paths of length 2, we have
  \[ [f,g]
  ~\kleeneeq~
  \mp{.3}{\includegraphics[scale=0.5]{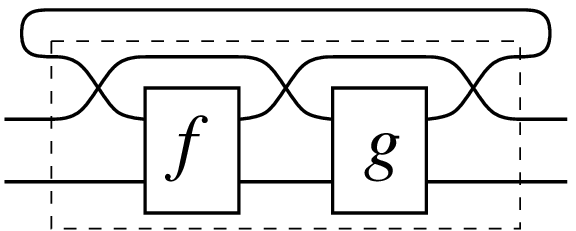}}.
  \]
\end{lemma}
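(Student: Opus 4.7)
The plan is a direct unfolding of Definition~\ref{def-intp-comp} at path length~$2$, followed by simplification using the partial trace axioms and the graphical conventions of Section~\ref{ssec-graphical-partial}.

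First, I would compute $\sem{f,g}$ by applying the recursive step of precomposition twice, starting from the base case $\sem{\epsilon_{\pmobj{0}}} = \id_{\Ap_0 \x \Am_0}$. This gives a totally defined morphism built from $f$, $g$, identities, and symmetries. Then by Definition~\ref{def-intp-comp},
\[
[f,g] \kleeneeq \Tr^{\Am_0 \x \Am_1}\bigl(\sem{f,g} \circ (\Ap_0 \x \sym_{\Am_2,\Am_0 \x \Am_1})\bigr).
\]

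Next, I would apply vanishing~II to rewrite this single trace as the iterated trace $\Tr^{\Am_0}\bigl(\Tr^{\Am_1}(-)\bigr)$. The outer loop on $\Am_0$ feeds the ``bookkeeping'' copy of $\Am_0$ in the codomain of $\sem{f,g}$ back to the corresponding input component. Since this wire passes only through identities and symmetries of $\sem{f,g}$ (not through $f$ or $g$), repeated applications of yanking, dinaturality, strength, and naturality --- in the ``enlarging the dashed box'' mode already used in the proof of Lemma~\ref{lem-path-pq} --- collapse the outer trace to an identity wire. What remains is a single $\Tr^{\Am_1}$ wrapping a diagram built from $f$, $g$, and symmetries, which matches the right-hand side of the statement.

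The main obstacle is bookkeeping of Kleene equalities. Vanishing~II gives equality in both directions only when the inner trace is defined, and strength, superposing, and naturality are in general only directed. However, the outer-loop simplification uses only left-to-right applications of these axioms together with yanking, which is total; hence the definedness of the final expression reduces to that of the inner $\Tr^{\Am_1}$, which coincides with the definedness of $[f,g]$. This produces full Kleene equality between the two sides, as required.
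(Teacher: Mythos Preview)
Your overall strategy --- eliminate the redundant $\Am_0$ loop in $\Tr^{\vecc\Am}(\sem{f,g}\circ(\id\x\sym))$ and be left with the single $\Tr^{\Am_1}$ of the lemma --- is correct, and is exactly what the paper does. But the order in which you propose to carry it out does not go through, and the last paragraph does not repair it.

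The concrete problem is the direction of vanishing~II. You want to pass from the single trace $\Tr^{\Am_0\x\Am_1}(M)$ to the nested form $\Tr^{\Am_0}\bigl(\Tr^{\Am_1}(M)\bigr)$. That is the right-to-left direction of vanishing~II, which requires the \emph{inner} trace $\Tr^{\Am_1}(M)$ to be defined. But $\Am_1$ is precisely the nontrivial feedback wire threading through $f$ and $g$; its definedness is what the lemma is about, so you cannot assume it. Your closing argument (``the definedness of the final expression reduces to that of the inner $\Tr^{\Am_1}$, which coincides with the definedness of $[f,g]$'') is circular here: you need $\Tr^{\Am_1}(M)\defined$ \emph{before} you are allowed to write the nested form at all. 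A second, related issue: once you are at $\Tr^{\Am_0}\bigl(\Tr^{\Am_1}(M)\bigr)$, collapsing the \emph{outer} $\Am_0$ loop means pulling the black box $\Tr^{\Am_1}(M)$ out of the $\Tr^{\Am_0}$, which is the right-to-left direction of strength/naturality --- again not available.

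The paper's proof avoids both problems by making the trivial $\Am_0$ loop the \emph{inner} trace rather than the outer one. Concretely, it starts from the right-hand side, uses yanking to introduce a small $\Tr^{\Am_0}(\sigma)$ next to the existing diagram, and then enlarges that box via strength and naturality (left-to-right, hence sound) to obtain a defined morphism of the form $\Tr^{\Am_0}(\cdots)$. With this inner trace now certified defined, vanishing~II applies as a genuine Kleene equality, after which dinaturality rearranges the combined trace into the defining expression for $[f,g]$. If you prefer to run the argument in your direction (from $[f,g]$ to the right-hand side), the fix is to first apply dinaturality to reorder $\Tr^{\Am_0\x\Am_1}$ as $\Tr^{\Am_1\x\Am_0}$, then show the now-inner $\Tr^{\Am_0}$ is defined via yanking/strength/naturality, and only then invoke vanishing~II.
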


\begin{proof}
  By yanking, strength, and naturality, we have
  \begin{equation}\label{eqn-comp-fg}
    \mp{.3}{\includegraphics[scale=0.5]{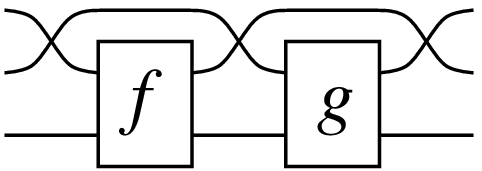}}
    ~\kleeneleq~
    \mp{.3}{\includegraphics[scale=0.5]{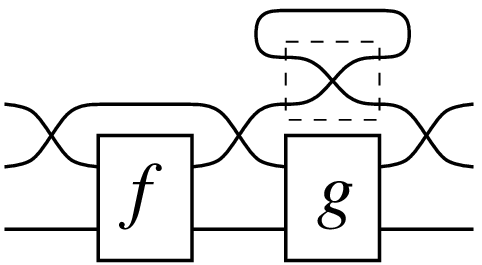}}
    ~\kleeneleq~
    \mp{.3}{\includegraphics[scale=0.5]{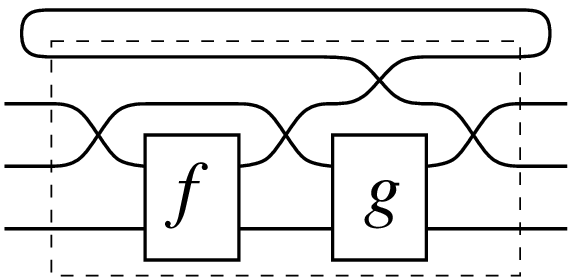}}.
  \end{equation}
  Since the left-hand side is defined, so is the right-hand side.
  This justifies the application of vanishing II in the following:
  \[
  \mp{.3}{\includegraphics[scale=0.5]{comp-fg8}}
  ~\stackrel{\rm(\ref{eqn-comp-fg})}{\kleeneeq}~
  \mp{.3}{\includegraphics[scale=0.5]{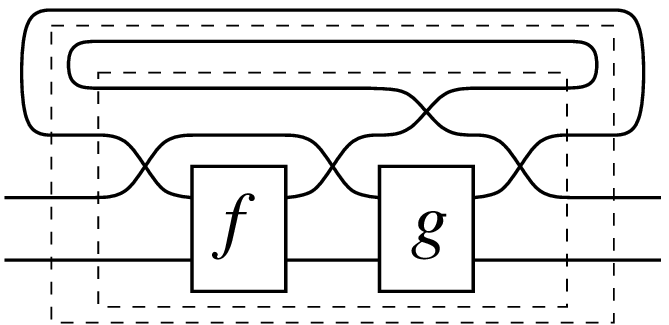}}
  ~\stackrel{\rm(v.II)}{\kleeneeq}~
  \mp{.3}{\includegraphics[scale=0.5]{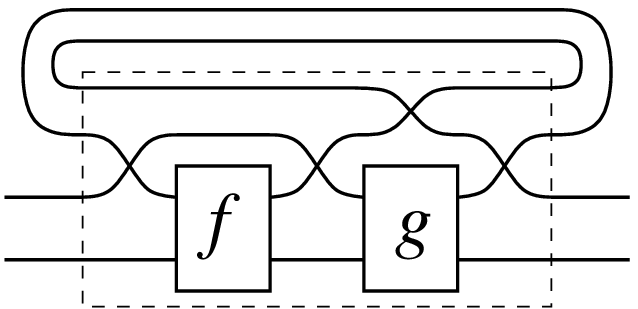}}
  \]\[
  ~\stackrel{\rm(dinat.)}{\kleeneeq}~
  \mp{.3}{\includegraphics[scale=0.5]{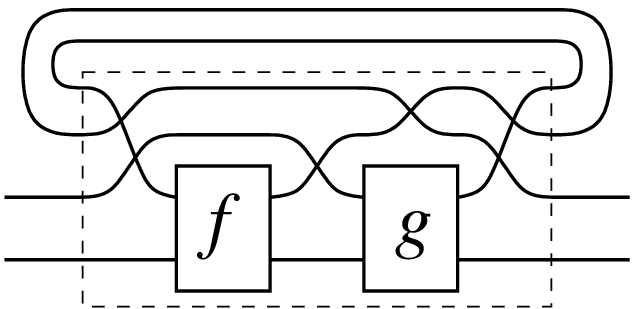}}
  ~\kleeneeq~
  \mp{.3}{\includegraphics[scale=0.5]{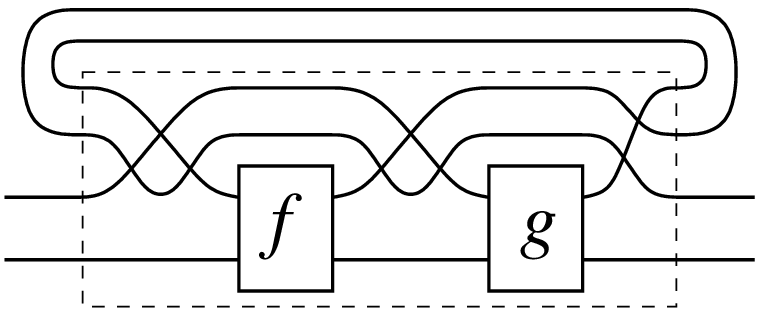}}
  ~\stackrel{\rm(def)}{\kleeneeq}~
  [f,g]
  .
  \]
\end{proof}

\subsection{$\IntpC$ is symmetric monoidal}

Next, we wish to show that the paracategory $\IntpC$ is strictly
monoidal.

\begin{definition}\label{def-intpc-tensor}
  The tensor on the paracategory $\IntpC$ is defined as follows:
  \begin{itemize}
  \item on objects: $(A^+,A^-)\otimes (B^+,B^-)=(A^+\otimes
    B^+,B^-\otimes A^-)$;
  \item on arrows: given $f^{\Intp}:(A^+,A^-)\rightarrow (C^+,C^-)$
    and $g^{\Intp}:(B^+,B^-)\rightarrow (D^+,D^-)$, then $(f\otimes
    g)^{\Intp}:(A^+,A^-)\otimes (B^+,B^-)\rightarrow
    (C^+,C^-)\otimes (D^+,D^-)$ is defined by
    \[ \includegraphics[scale=0.5]{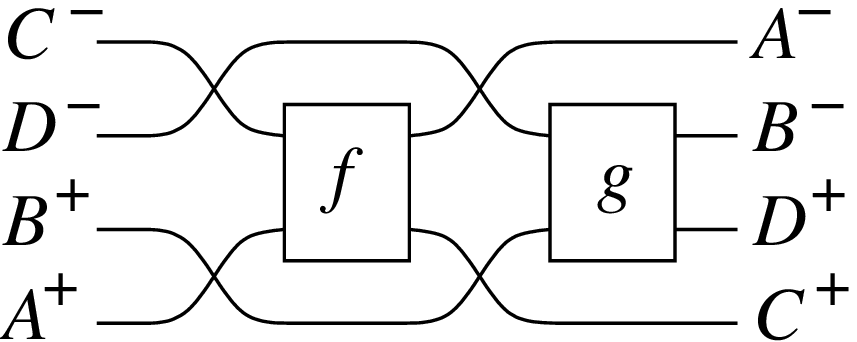}.
    \]
  \end{itemize}
  We also define the tensor unit to be $(I,I)$.
\end{definition}

\begin{lemma}
  The operation $\otimes$ is a functor of paracategories.
\end{lemma}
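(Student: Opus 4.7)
The plan is to verify that $\otimes$ is a graph homomorphism (immediate from Definition~\ref{def-intpc-tensor}) and that it satisfies the defining condition of a functor of paracategories: for every path $((f_1,g_1),\ldots,(f_n,g_n))$ in $\IntpC\times\IntpC$,
\[ [f_1,\ldots,f_n]\otimes[g_1,\ldots,g_n] \kleeneleq [f_1\otimes g_1,\ldots,f_n\otimes g_n]. \]
The special case $n=0$ reduces to showing $\id_{(A^+,A^-)}\otimes\id_{(B^+,B^-)} = \id_{(A^+\otimes B^+,B^-\otimes A^-)}$ in $\IntpC$; this is a brief calculation from Definition~\ref{def-intpc-tensor} using the naturality and symmetry axioms of $\sym$ in $\cC$.

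The substantive content is the case $n\geq 1$. I would proceed graphically, assuming the left-hand side is defined (so that both $[f_1,\ldots,f_n]$ and $[g_1,\ldots,g_n]$ exist). By Lemma~\ref{lem-path-pq} and Definition~\ref{def-intp-comp}, these are traces over $\vecc{A^-}$ and $\vecc{B^-}$ of the respective precompositions, and their $\IntpC$-tensor is computed from Definition~\ref{def-intpc-tensor} by sandwiching their parallel combination with symmetries. Applying superposing, naturality, and dinaturality (in their directed-Kleene forms), this tensor of two traces can be rewritten as a single trace over $\vecc{A^-}\otimes\vecc{B^-}$ of an explicit morphism assembled from $\sem{\vec f}$, $\sem{\vec g}$, and symmetry wires.

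On the other hand, unfolding $[f_1\otimes g_1,\ldots,f_n\otimes g_n]$ via Definition~\ref{def-intp-comp} yields a trace over $\vecc{(A\otimes B)^-}$, whose factors $(B^-_i\otimes A^-_i)$ interleave the $A^-_i$ and $B^-_i$ strands. Using dinaturality to conjugate by an appropriate permutation of wires reorganizes these strands into the order $\vecc{A^-}\otimes\vecc{B^-}$, making this trace directly comparable to the one obtained for the left-hand side. Unwinding $\sem{f_1\otimes g_1,\ldots,f_n\otimes g_n}$ inductively via the step clause in the definition of precomposition, and sliding symmetries past the $f_i$ and $g_i$ via naturality, identifies the two morphisms being traced, establishing the desired equality.

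The main obstacle is the combinatorial bookkeeping of the many interleaved symmetry wires; once these are organized consistently (for instance, by factoring out a single permutation from $(B^-_0\otimes A^-_0)\otimes\ldots\otimes(B^-_{n-1}\otimes A^-_{n-1})$ to $\vecc{A^-}\otimes\vecc{B^-}$), the trace axioms suffice. The critical point is that every rewriting step is compatible with the directed Kleene order: assuming the left-hand side is defined, each intermediate trace is defined, so we use vanishing II and strength only in their ``defined implies defined'' direction, which is what the partial setting provides. This is the same discipline applied in the proofs of Lemma~\ref{lem-comp-fg} and Lemma~\ref{lem-intpc-paracat}, and I would expect the proof to proceed by a similar chain of graphical rewrites.
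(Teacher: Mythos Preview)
Your approach is correct but takes a different route from the paper. You attack the general functoriality condition $[f_1,\ldots,f_n]\otimes[g_1,\ldots,g_n]\kleeneleq[f_1\otimes g_1,\ldots,f_n\otimes g_n]$ head-on for arbitrary $n$, merging the two traces via strength/superposing and vanishing~II, then permuting the traced object by dinaturality, and finally establishing by induction that the morphisms under the trace agree. This works, and you have correctly identified that the combinatorial bookkeeping of the interleaved symmetry wires is the real cost.

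The paper instead invokes Lemma~\ref{lem-ssmpc}, which reduces functoriality to two special cases: (a) the binary case $[f,f']\otimes[g,g']\kleeneleq[f\otimes g,f'\otimes g']$, handled cleanly via the length-$2$ formula of Lemma~\ref{lem-comp-fg}; and (b) the one-sided case $1\otimes[\vec p\,]\kleeneleq[1\otimes\vec p\,]$ (and its mirror), illustrated on a representative path of length~$3$. This decomposition decouples the two sources of complexity --- path length and having two non-trivial factors --- so neither sub-proof has to manage both at once. Your direct argument is conceptually the same chain of trace manipulations, but carried out in one go; the paper's reduction buys a shorter and more modular verification at the price of relying on the auxiliary Lemma~\ref{lem-ssmpc}.
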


\begin{proof}
  We have to show the two conditions from Lemma~\ref{lem-ssmpc}.
  \begin{itemize}
  \item[(a)] We show $[f,f']\otimes [g,g']\kleeneleq [f\otimes g,
    f'\otimes g']$. By Lemma~\ref{lem-comp-fg}, strength, superposing,
    naturality, the left-to-right direction of vanishing, and the laws
    of symmetric monoidal categories, we have:
    \[ [f,f']\otimes [g,g']
    ~\kleeneeq~
    \mp{.3}{\includegraphics[scale=0.5]{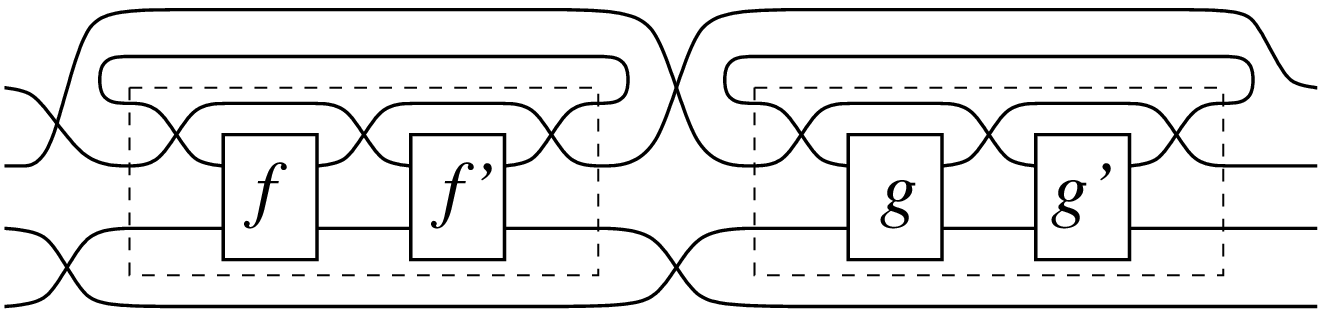}}
    \]\[
    ~\kleeneleq~
    \mp{.3}{\includegraphics[scale=0.5]{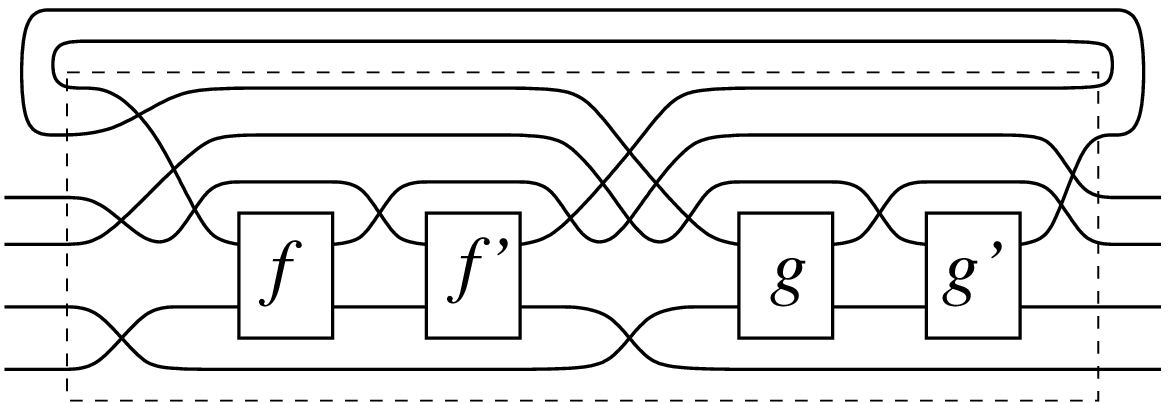}}
    ~\kleeneeq~
    \mp{.3}{\includegraphics[scale=0.5]{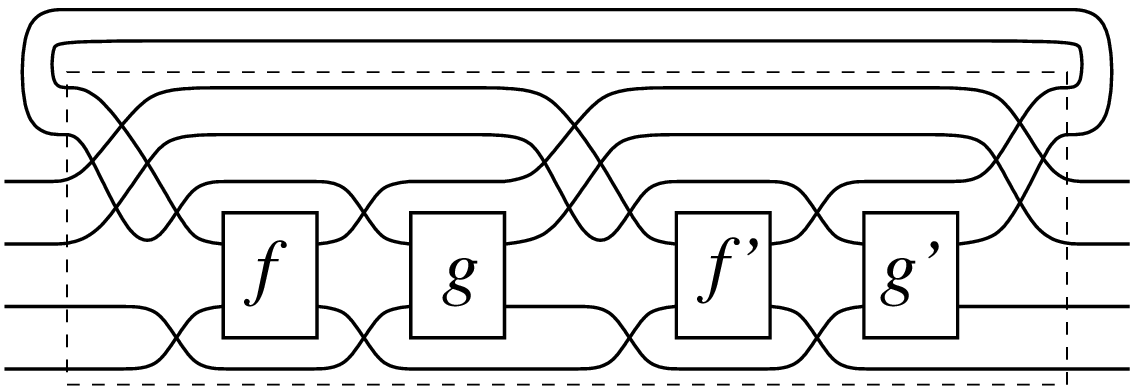}},
    \]
    and the final diagram is just the definition of $[f\otimes g,
    f'\otimes g']$.
  \item[(b)] We will only show $1\otimes [\,\vec{p}\,]\kleeneleq
    [\,1\otimes \vec{p}\,]$; the proof of the other property
    $[\,\vec{p}\,]\x 1\kleeneleq [\,\vec{p}\x 1\,]$ is similar. Since
    the proof by induction is long and not very interesting, we will
    only consider the representative case when
    $\vec{p}=p_1,p_2,p_3$. Using superposing, yanking, strength,
    naturality, vanishing II, and dinaturality, we have:
    \[ [\,\vec{p}\,]\x 1
    ~\stackrel{\rm(def)}{\kleeneeq}~
    \mp{.15}{\includegraphics[scale=0.4]{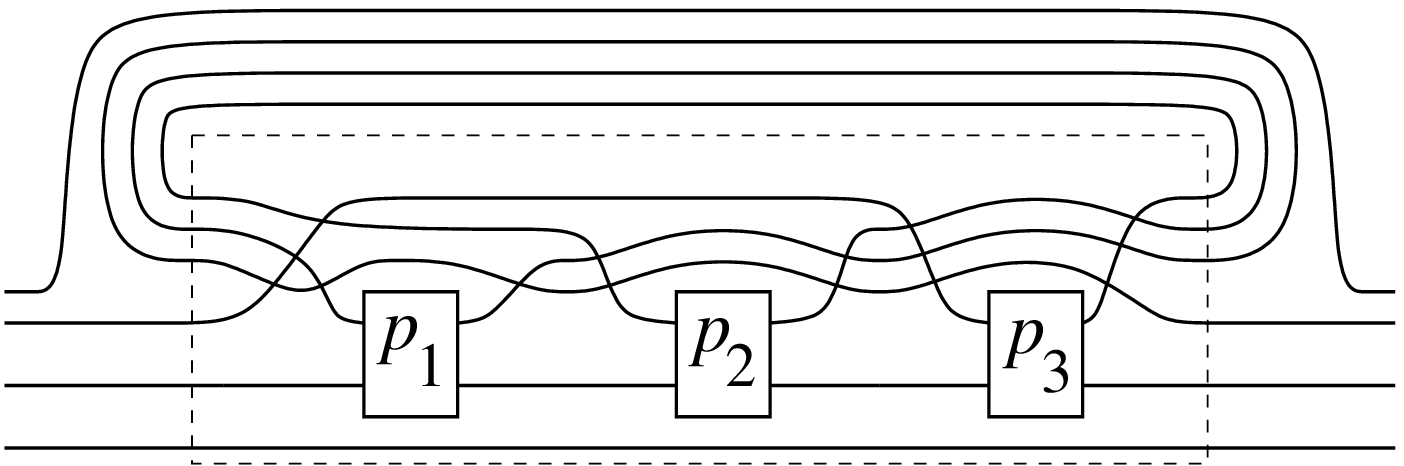}}
    ~\kleeneleq~
    \mp{.15}{\includegraphics[scale=0.4]{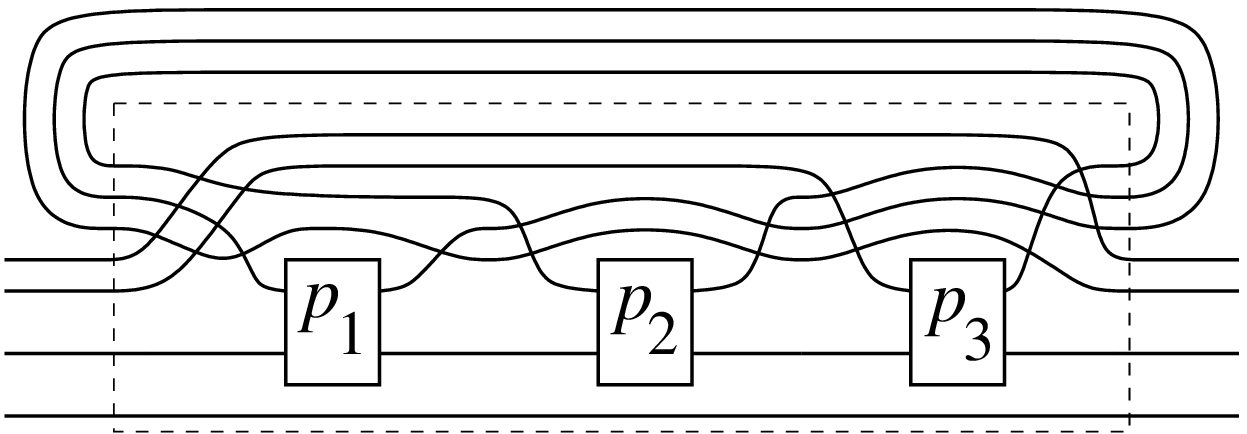}}
    \]\[
    ~\kleeneeq~
    \mp{.15}{\includegraphics[scale=0.4]{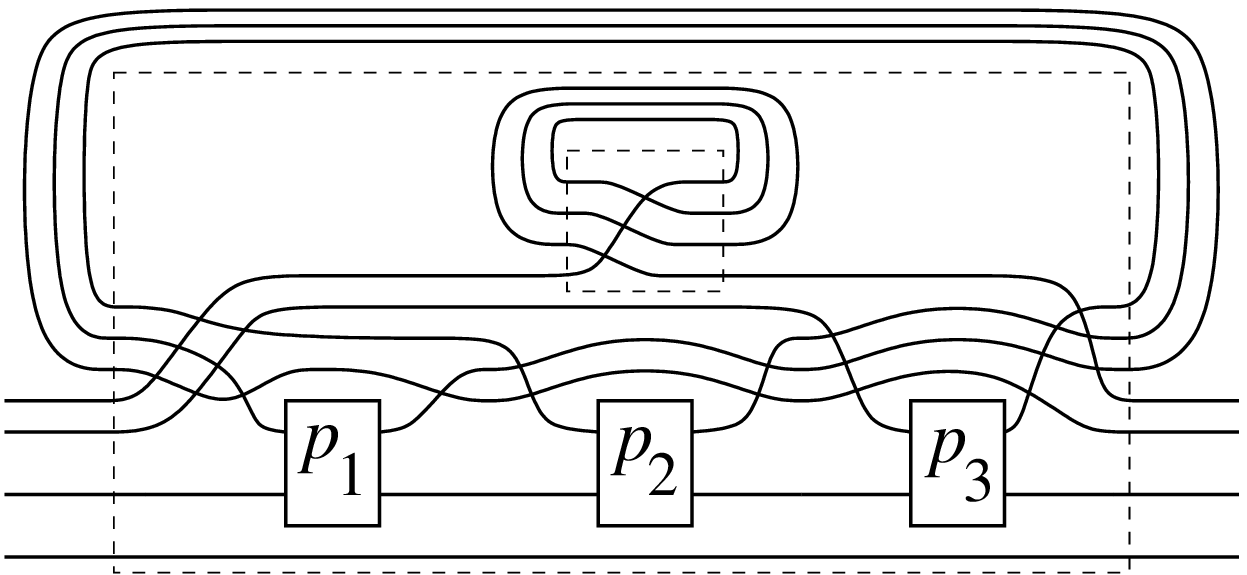}}
    ~\kleeneleq~
    \mp{.15}{\includegraphics[scale=0.4]{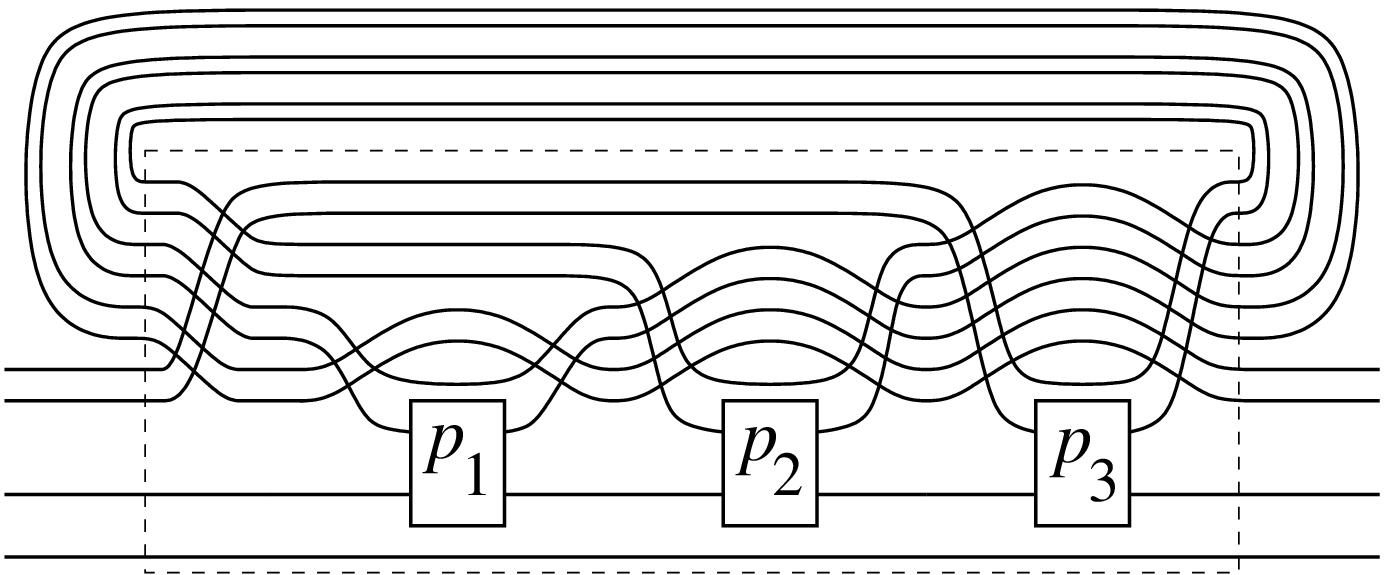}}
    ~\stackrel{\rm(def)}{\kleeneeq}~
    [\,\vec{p}\otimes 1\,].
    \]
  \end{itemize}
\end{proof}

\begin{lemma}
  With the tensor product from Definition~\ref{def-intpc-tensor},
  $\IntpC$ is a strict monoidal paracategory in the sense of
  Definition~\ref{def-ssmpc}(b).
\end{lemma}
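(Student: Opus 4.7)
The plan is to verify separately the conditions on objects and on arrows. On objects, one has $(A\otimes B)\otimes C = ((A^+\otimes B^+)\otimes C^+,\, C^-\otimes(B^-\otimes A^-))$ and $A\otimes(B\otimes C) = (A^+\otimes(B^+\otimes C^+),\, (C^-\otimes B^-)\otimes A^-)$, and these coincide by the strict associativity of $\cC$. Similarly, $A\otimes(I,I) = (A^+\otimes I,\, I\otimes A^-) = (A^+,A^-) = A$ and analogously on the left, using that $\cC$ is strictly monoidal.

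For the unit law on arrows, I would first record that the identity on $(A^+,A^-)$ in $\IntpC$ is just $\id_{A^+\otimes A^-}:A^+\otimes A^-\to A^+\otimes A^-$ in $\cC$: by Remark~\ref{rem-identities} and part (a) of Lemma~\ref{lem-intpc-paracat}, $1_{(A^+,A^-)} = [\eps_{(A^+,A^-)}] = \sem{\eps_{(A^+,A^-)}} = \id_{A^+\otimes A^-}$. In particular $1_{(I,I)} = \id_I$ in $\cC$. Substituting this into Definition~\ref{def-intpc-tensor} and using the strict unit laws of $\cC$ to eliminate the $I$-components, the symmetries appearing in the tensor diagram degenerate to identities, and both $f\otimes 1_{(I,I)}$ and $1_{(I,I)}\otimes f$ reduce on the nose to $f$.

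The main content is associativity on arrows. Given $f:\pmobj{}\to(C^+,C^-)$, $g:(B^+,B^-)\to(D^+,D^-)$, $h:(E^+,E^-)\to(F^+,F^-)$, both $(f\otimes g)\otimes h$ and $f\otimes(g\otimes h)$ are morphisms in $\cC$ of the same type, namely $A^+\otimes B^+\otimes E^+\otimes F^-\otimes D^-\otimes C^-\to C^+\otimes D^+\otimes F^+\otimes E^-\otimes B^-\otimes A^-$ (using strictness of $\cC$). Unfolding Definition~\ref{def-intpc-tensor} twice in each case, each side is expressible as $(f\otimes g\otimes h)$ pre- and post-composed with a permutation of tensor factors built from symmetries. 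The key observation is that both resulting permutations implement the same underlying bijection of factors, so by the coherence theorem for symmetric monoidal categories~\cite{MacLane91} they are equal in $\cC$. Equivalently, using the graphical language of Section~\ref{ssec-graphical-total} the two diagrams are isomorphic as wire diagrams, so they represent the same morphism.

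The anticipated obstacle is purely bookkeeping: one must carefully track the order in which the positive and negative factors are permuted when iterating the tensor, so that the two nested applications really do yield the same rewiring. Since no traces are introduced by $\otimes$ itself, no totality or Kleene-equality subtleties arise, and once the indexing is fixed, coherence in $\cC$ finishes the argument.
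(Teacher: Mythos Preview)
Your proposal is correct and follows essentially the same approach as the paper: the object-level conditions and the unit laws on arrows reduce to strictness of $\cC$, while associativity on arrows holds because both $(f\otimes g)\otimes h$ and $f\otimes(g\otimes h)$ unfold to the same symmetric-monoidal diagram in $\cC$ (the paper simply displays this common diagram rather than invoking coherence explicitly). Your remark that no traces appear, so no partiality issues arise, is exactly the reason the argument is purely a coherence computation in $\cC$.
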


\begin{proof}
  The conditions $(A\otimes B)\otimes C=A\otimes (B\otimes C)$,
  $A\otimes I=A=I\otimes A$, and $f\otimes 1_I=f=1_I\otimes f$ follow
  immediately from the strictness of $\cC$. The condition $(f\otimes
  g)\otimes h=f\otimes (g\otimes h)$ holds because both sides are
  equal to the diagram
  \[    \includegraphics[scale=0.5]{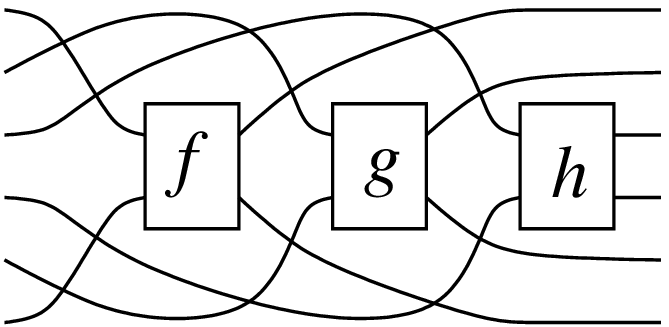}.
  \]
\end{proof}


Next, we will equip the category $\IntpC$ with a symmetry.

\begin{definition}
  \rm The symmetry $\sym:(\Ap,\Am)\otimes (\Bp,\Bm)\to
  (\Bp,\Bm)\otimes (\Ap,\Am)$ in $\IntpC$ is given by
  $\sym_{\Ap,\Bp}\otimes
  \sym_{\Am,\Bm}:(\Ap\x\Bp)\x(\Bm\x\Am)\to(\Bp\x\Ap)\x(\Am\x\Bm)$.
\end{definition}

\begin{lemma}
  With this structure, $\IntpC$ is a strict symmetric monoidal paracategory.
\end{lemma}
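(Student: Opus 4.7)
The plan is to verify the four conditions of Definition~\ref{def-ssmpc}(c) for the symmetry $\sigma=\sigma_{\Ap,\Bp}\otimes\sigma_{\Am,\Bm}$ on $\IntpC$: totality of compositions with $\sigma$, naturality, $[\sigma,\sigma]=1$, and the hexagon. Throughout, I will use the formula for binary composition from Lemma~\ref{lem-comp-fg}, which expresses $[f,g]$ in $\IntpC$ as a partial trace in $\cC$ of a specific composite built from $f$, $g$, and a swap. The guiding observation is that the $\IntpC$-symmetry $\sigma$ is built entirely from $\cC$-symmetries, which are isomorphisms, so any trace-loop arising in a composition $[1_X\otimes\sigma_{A,B},f]$ or $[g,1_X\otimes\sigma_{A,B}]$ on the ``symmetry wires'' can be eliminated by yanking together with Remark~\ref{rem-naturality} (naturality as Kleene equality for isomorphisms) and dinaturality.

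For totality, I would take $f:X\otimes B\otimes A\to Y$ in $\IntpC$ and draw the graphical expression for $[1_X\otimes\sigma_{A,B},f]$ using Lemma~\ref{lem-comp-fg}. The apparent trace is taken over the entire negative part of the middle object $X\otimes B\otimes A$, i.e., over $A^-\otimes B^-\otimes X^-$. Since $\sigma_{A^-,B^-}$ sits inside one factor, I can separate this into a trace over $A^-\otimes B^-$ together with a trace over $X^-$, and then yank the $\sigma_{A^-,B^-}$-loop away by combining yanking with dinaturality, exactly in the style of Lemma~\ref{lem-path-pq} and Lemma~\ref{lem-comp-fg}. What remains is a trace expression in $\cC$ whose loop passes only through identities, and by vanishing~I together with strength it is defined. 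The totality of $[g,1_X\otimes\sigma_{A,B}]$ is symmetric.

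For the naturality of $\sigma$, I would expand each side as a trace via Lemma~\ref{lem-comp-fg}, eliminate the $\sigma$-loops on both sides via the same yanking step, and reduce the equation to the naturality of the $\cC$-symmetries $\sigma_{-,-}$ on positive and negative components, which holds in $\cC$. The identity $[\sigma_{A,B},\sigma_{B,A}]=1_{A\otimes B}$ likewise reduces, by Lemma~\ref{lem-comp-fg} and yanking applied to the two $\cC$-symmetry loops, to $\sigma_{B^+,A^+}\circ\sigma_{A^+,B^+}=\id$ and the analogous equation in the negative component. The hexagon axiom is the same kind of calculation: expand both sides using Lemma~\ref{lem-comp-fg}, remove the trace-loops via yanking, and invoke the hexagon for $\sigma$ in $\cC$ twice (once for the positive, once for the negative components).

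The main obstacle is the totality step: one must be meticulous that each graphical manipulation used to eliminate the trace is sanctioned by the partial axioms of Definition~\ref{def-partial-trace}, typically the directed Kleene inequalities of strength and superposing together with the equality form of dinaturality and naturality through isomorphisms. Once totality is established, all the equations are true Kleene equalities between defined morphisms, and the remaining axioms are routine consequences of the corresponding axioms for $\cC$.
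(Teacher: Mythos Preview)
Your overall strategy matches the paper's: express the relevant compositions via Lemma~\ref{lem-comp-fg} as traces in $\cC$, then exploit that the $\IntpC$-symmetry is built from $\cC$-symmetries to eliminate those traces. Two points deserve correction.

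First, your totality argument runs in the wrong direction. You propose to start from the trace expression for $[1_X\otimes\sigma_{A,B},f]$ and simplify it down; but the directed Kleene inequalities you have available (strength, superposing, naturality, the unconditional half of vanishing~II) all point the other way, from less-traced to more-traced. The paper instead starts from an obviously defined, trace-free morphism of $\cC$ (namely $f$ conjugated by the appropriate $\cC$-symmetries on the positive and negative wires), introduces a trivial loop by yanking, and then enlarges it via strength and naturality until it matches the Lemma~\ref{lem-comp-fg} expression for $[1_X\otimes\sigma_{A,B},f]$. Since the starting point is defined, so is the end. As a byproduct one obtains explicit closed formulas for $[\sigma,f]$ and $[g,\sigma]$ (the paper's~(\ref{eqn-sym-tot5})), from which naturality, $[\sigma,\sigma]=1$, and the hexagon follow by direct inspection without any further trace manipulation.

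Second, your appeal to vanishing~I for the residual $X^-$ loop is incorrect: vanishing~I is about tracing over the unit $I$, which is not what is happening here. The $X^-$ strand in $1_X\otimes\sigma$ is an identity, so the traced $X^-$ wire runs from the trace input straight to an external output; this is eliminated by the same yanking-plus-strength-plus-naturality move as the $A^-\otimes B^-$ strands, not by vanishing~I. In fact there is no need to split the trace into two pieces at all: the entire feedback over $A^-\otimes B^-\otimes X^-$ passes only through the isomorphism $\sigma_{A^-,B^-}\otimes 1_{X^-}$ before exiting to the external outputs, and is removed in a single yanking step.
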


\begin{proof}
  We must show that $\sym$ satisfies the conditions of
  Definition~\ref{def-ssmpc}(c).  To prove totality, consider any
  $f:X\x B\x A\to Y$, where $A=\pmobjl{A}$, $B=\pmobjl{B}$,
  $X=\pmobjl{X}$, and $Y=\pmobjl{Y}$. We must prove that
  $[1_X\x\sym_{A,B},f]$ is defined. But using yanking, strength,
  naturality, and Lemma~\ref{lem-comp-fg}, we have
  \begin{equation}\label{eqn-sym-tot4}
    \mp{.3}{\includegraphics[scale=0.5]{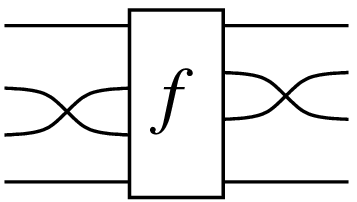}}
    ~\kleeneeq~
    \mp{.15}{\includegraphics[scale=0.5]{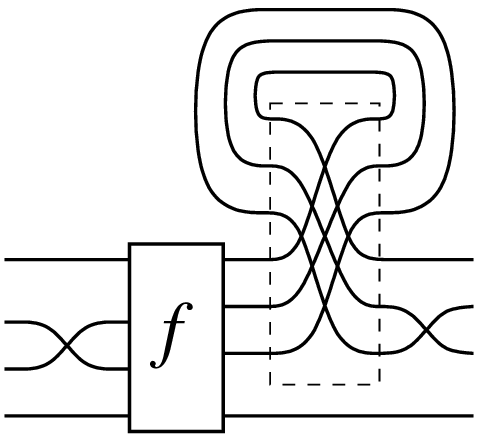}}
    ~\kleeneleq~
    \mp{.18}{\includegraphics[scale=0.5]{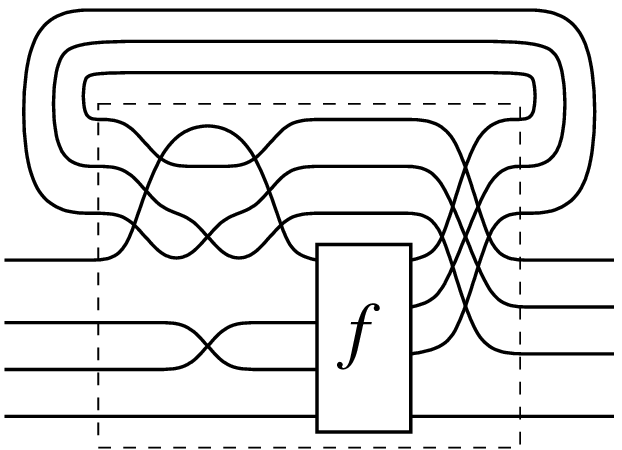}}
    ~\kleeneeq~
    [1_X\x\sym_{A,B},f].
  \end{equation}
  Since the left-hand side is defined, so is the right-hand side. One
  similarly proves that $[g,1_X\x \sigma_{A,B}]\defined$. By setting
  $X=1$ in (\ref{eqn-sym-tot4}) and the corresponding property for
  $g$, we get the identities
  \begin{equation}\label{eqn-sym-tot5}
    [\sym_{A,B},f]
    ~=~
    \mp{.3}{\includegraphics[scale=0.5]{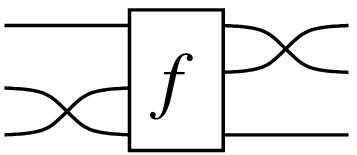}}
    \quad\mbox{and}\quad
    [g,\sym_{A,B}]
    ~=~
    \mp{.3}{\includegraphics[scale=0.5]{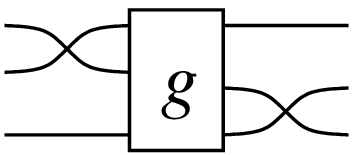}}.
  \end{equation}
  The remaining laws follow from (\ref{eqn-sym-tot5}). We have:
  \begin{equation}\label{eqn-intpc-sym-nat}
    [f\otimes g,\sigma]
    ~\stackrel{\rm(\ref{eqn-sym-tot5})}{=}~
    \mp{.4}{\includegraphics[scale=0.5]{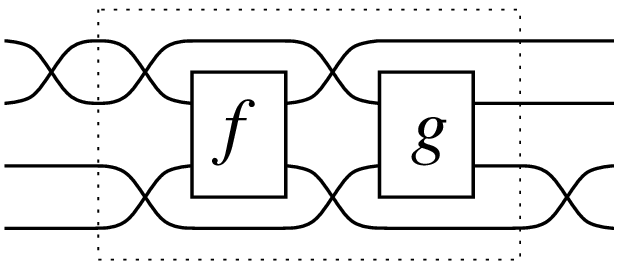}}
    ~=~
    \mp{.4}{\includegraphics[scale=0.5]{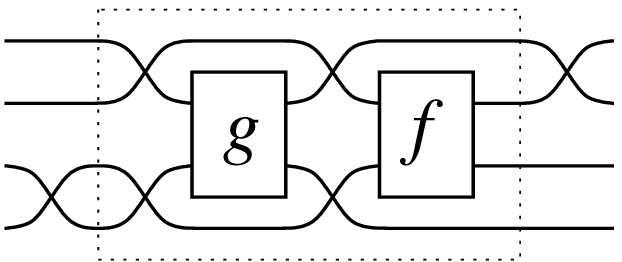}}
    ~\stackrel{\rm(\ref{eqn-sym-tot5})}{=}~
    [\sigma,g\x f].
  \end{equation}
  \begin{equation}\label{eqn-intpc-sym-sym}
    [\sigma,\sigma]
    ~\stackrel{\rm(\ref{eqn-sym-tot5})}{=}~
    \mp{.4}{\includegraphics[scale=0.5]{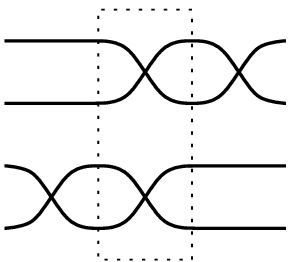}}
    ~=~ \id
  \end{equation}
  \begin{equation}\label{eqn-intpc-sym-hex}
    [\sym_{A,B\x C},1_B\x\sym_{C,A}]
    ~\stackrel{\rm(\ref{eqn-sym-tot5})}{=}~
    \mp{.4}{\includegraphics[scale=0.5]{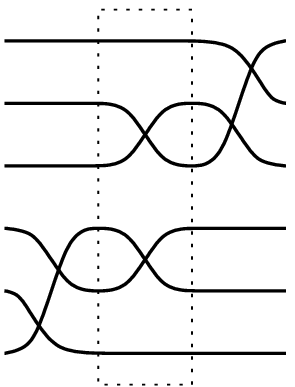}}
    ~\kleeneeq~
    \mp{.4}{\includegraphics[scale=0.5]{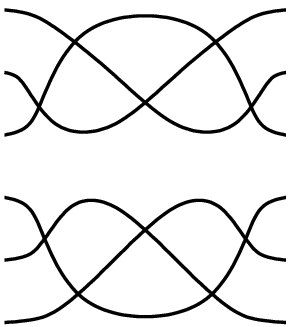}}
    ~\kleeneeq~
    \sym_{A,B}\x 1_C.
  \end{equation}
  Naturality is (\ref{eqn-intpc-sym-nat}), symmetry is
  (\ref{eqn-intpc-sym-sym}), and the hexagon axiom is equivalent to
  (\ref{eqn-intpc-sym-hex}) by Remark~\ref{rem-inverse}.
\end{proof}

\subsection{$\IntpC$ is compact closed}

\begin{definition}
  On $\Intp(C)$, we define the dual of an object to be $(A,B)^* =
  (B,A)$. Using strictness, we define the unit and counit morphisms
  $\eta_{(A,B)}:I\to (A,B)^*\x(A,B)$ and
  $\eps_{(A,B)}:(A,B)\x(A,B)^*\to I$ to be the morphisms $\eta_{(A,B)}=\id:B\x
  A\to B\x A$ and as $\eps_{(A,B)}=\id:A\x B\to A\x B$ in $\cC$.
\end{definition}

\begin{lemma}
  With this structure, $\IntpC$ is a compact closed paracategory.
\end{lemma}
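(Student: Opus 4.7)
The plan is to verify the two families of axioms required of a compact closed paracategory: the totality conditions for the four compositions involving $\eta$ and $\eps$, and the two triangle identities. The key observation is that in $\IntpC$, both $\eta_{(A,B)}$ and $\eps_{(A,B)}$ are, at the level of $\cC$, identity morphisms. Thus in each of the six axioms, the only genuinely nontrivial $\cC$-arrow appearing under the trace is the one variable morphism ($f$, $g$, $h$, $k$, or none in the triangle identities); all the remaining structure consists of identities and symmetries introduced by the tensor and composition formulas of Definitions~\ref{def-intpc-tensor} and~\ref{def-intp-comp}.

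For totality, I would proceed by explicit graphical computation, following the same pattern used for the symmetry totality in equation~(\ref{eqn-sym-tot4}). Concretely, for $[1_A\x\eta_C, f\x 1_C]$ with $f:A\x C^*\to B$, I would expand using Lemma~\ref{lem-comp-fg} to realize the composite as a trace over the negative component of $A\x C^*\x C$ of a diagram whose only non-identity box is $f$. Since the $\eta_C$-strand is an identity in $\cC$, the loops of that trace created by the $1_A\x\eta_C$ factor are pure yanking loops. Applying yanking, strength, and naturality moves them outside the trace, leaving a trace whose interior contains only $f$ together with braidings; this residual trace is itself the $\IntpC$-representation of $f$ (up to the canonical unit isomorphisms, which are identities by strictness), and therefore it is defined. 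The three other totality conditions for $[g\x 1_{C^*},1_B\x\eps_C]$, $[\eta_A\x 1_B,1_{A^*}\x h]$, and $[1_A\x k,\eps_A\x 1_C]$ follow by the same pattern, swapping the roles of the two objects or of $\eta$ and $\eps$.

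For the triangle identities, consider $[1_A\x\eta_A,\eps_A\x 1_A]$ with $A=\pmobjl{A}$. By Lemma~\ref{lem-comp-fg} this composition is the trace in $\cC$ over $A^-\x A^+\x A^-$ of the diagram obtained by tensoring $1_A\x\eta_A$ and $\eps_A\x 1_A$ (each of which is an identity in $\cC$, up to the symmetry dictated by Definition~\ref{def-intpc-tensor}) and composing. The resulting diagram is a pure braid: it contains no boxes, only identities and symmetries. Repeated application of yanking together with dinaturality collapses the three trace loops, and the diagrammatic computation returns the identity $\id_{A^+\x A^-}$ in $\cC$, which represents $1_A$ in $\IntpC$. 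The second identity $[\eta_A\x 1_{A^*},1_{A^*}\x\eps_A]=1_{A^*}$ is verified analogously, interchanging the roles of $\eta$ and $\eps$.

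The main obstacle is bookkeeping: in each step the positive and negative components of the objects $A, B, C, A^*, \dots$ get reshuffled by the tensor formula $(A,B)\otimes(C,D)=(A\otimes C,D\otimes B)$, and by the non-trivial symmetry in $\IntpC$. Conceptually nothing new is needed beyond the workhorse combination of yanking, strength, naturality, dinaturality, and the left-to-right direction of vanishing~II, all of which have been used repeatedly in the preceding lemmas. As with those lemmas, the cleanest exposition is graphical: drawing each composite from its definition and watching the identity-wires introduced by $\eta$ and $\eps$ collapse the traces. I would present one representative diagram chase in full (say, $[1_A\x\eta_A,\eps_A\x 1_A]=1_A$) and indicate that the remaining five verifications follow the same template.
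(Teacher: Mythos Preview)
Your approach is correct and matches the paper's: both exploit that $\eta$ and $\eps$ are identities in $\cC$ and reduce each composite, via yanking, strength, and naturality, to something manifestly defined. The one organizational difference is that the paper presents the totality chain for $[1_A\x\eta_C, f\x 1_C]$ in full and shows it actually equals $f$, then obtains the triangle identity $[1_A\x\eta_A,\eps_A\x 1_A]=1_A$ simply by specializing $C=A$, $B=I$, $f=\eps_A=\id$, rather than carrying out a separate braid computation as you propose.
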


\begin{proof}
  Let $f:A\x C^*\to B$. We must show that $[1_A\x\eta_C, f\x 1_C]$ is
  defined. We have:
  \[
  f
  ~\kleeneeq~
  \mp{.13}{\includegraphics[scale=0.5]{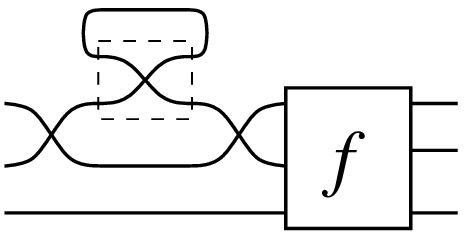}}
  ~\kleeneleq~
  \mp{.27}{\includegraphics[scale=0.5]{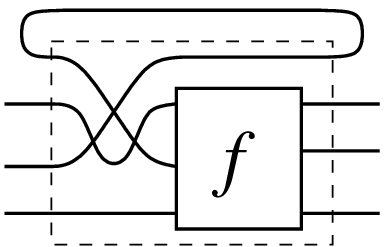}}
  ~\kleeneeq~
  \mp{.27}{\includegraphics[scale=0.5]{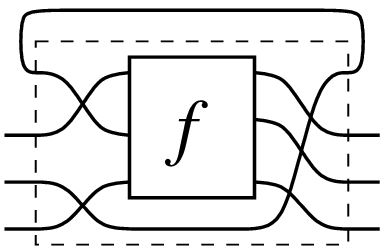}}
  ~\kleeneeq~
  \mp{.15}{\includegraphics[scale=0.5]{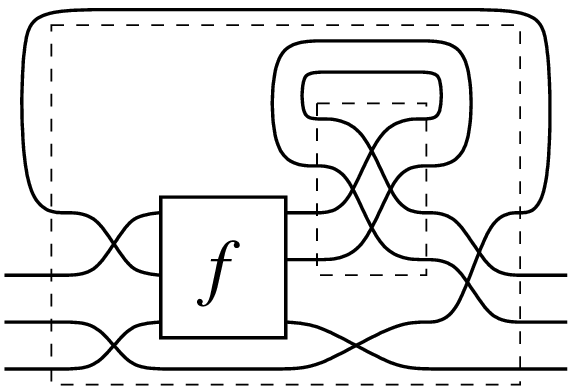}}
  \]\[
  ~\kleeneleq~
  \mp{.16}{\includegraphics[scale=0.5]{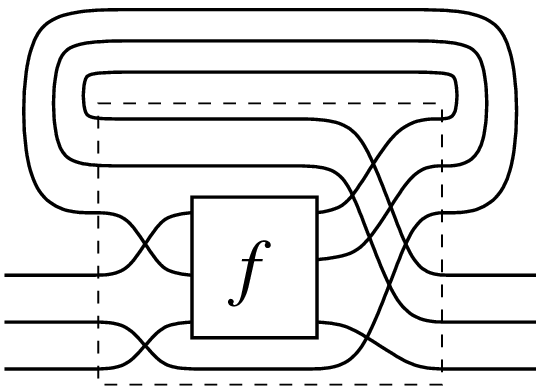}}
  ~\kleeneeq~
  \mp{.15}{\includegraphics[scale=0.5]{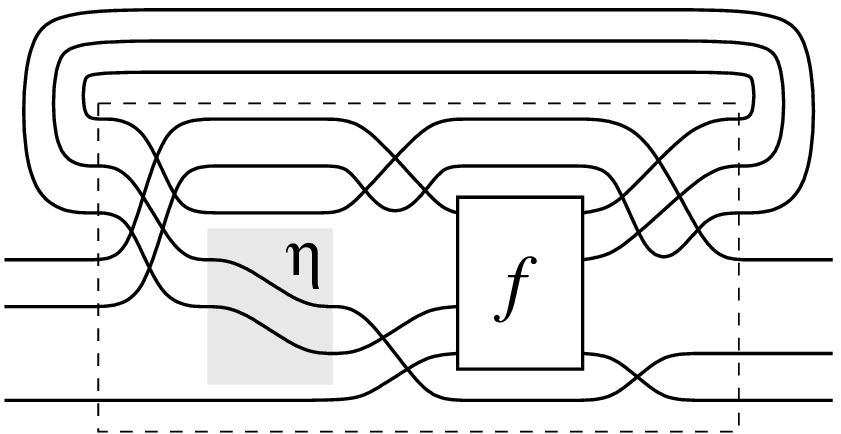}}
  ~\kleeneeq~
  [1_A\x\eta_C, f\x 1_C],
  \]
  and since the left-hand side is defined, so is the right-hand
  side. The proofs for the definedness of $[g\x 1_{C^*},
  1_B\x\eps_C]$, $[\eta_A\x 1_B,1_{A^*}\x h]$, and $[1_A\x k, \eps_A\x
  1_C]$ are similar. We prove that $[1_A\x \eta_A,\eps_A\x 1_A ]=1_A$
  by setting $C=A$, $B=I$, and $f=\eps_A$ in the above, and recalling
  that $\eps_A=\id_{\Ap\x\Am}=\id_A$ as morphisms of $\cC$. The proof
  of $[\eta_A\x 1_{A^*}, 1_{A^*}\x \eps_A]=1_{A^*}$ is analogous.
\end{proof}

\subsection{An embedding of $\cC$ in $\IntpC$}

\begin{definition}
  In a similar way as done in~\cite{JSV96}, we define a full and
  faithful functor of paracategories $N:\cC\rightarrow \IntpC$. It is
  given on objects by $N(A)=(A,I)$ and (using strictness of the
  category $\cC$) on morphisms by $N(f)=f$.
\end{definition}

\begin{theorem}\label{thm-n-faithful}
  $N$ is a full and faithful functor of strict symmetric monoidal
  paracategories. In particular, every partially traced (strictly
  monoidal) category can be fully and faithfully embedded in a compact
  closed paracategory.
\end{theorem}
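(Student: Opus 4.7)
The plan is to verify each of the claims in turn: that $N$ is a well-defined graph homomorphism, a functor of paracategories, strictly symmetric monoidal, and both full and faithful. The second sentence of the theorem (faithful embedding into a compact closed paracategory) will then follow immediately, since $\IntpC$ has already been shown to be a compact closed paracategory.

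The key observation is that strictness of $\cC$ identifies $A\otimes I=A$, so an arrow $N(A)\to N(B)$ in $\IntpC$, defined as a morphism $A\otimes I\to B\otimes I$ in $\cC$, is literally the same thing as an arrow $A\to B$ in $\cC$. Thus $N:\cC(A,B)\to\IntpC(NA,NB)$ is a bijection, giving fullness and faithfulness in one stroke. For the strict symmetric monoidal structure, strictness again does the work: $N(A)\otimes N(B)=(A,I)\otimes(B,I)=(A\otimes B,I\otimes I)=(A\otimes B,I)=N(A\otimes B)$, $N(I)=(I,I)$ is the unit, the tensor-on-arrows formula from Definition~\ref{def-intpc-tensor} collapses to $f\otimes g$ when the negative components are $I$, and $N(\sigma_{A,B})=\sigma_{A,B}$ matches the $\IntpC$-symmetry (using $\sigma_{I,I}=\id_I$, which follows from $\lambda\circ\sigma=\rho$ together with strictness).

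The real content is verifying that $N$ preserves composition, i.e., $N[\vec f\,]\kleeneleq[N\vec f\,]$. Since $\cC$ is a (total) category, $[\vec f\,]=f_n\circ\cdots\circ f_1$ is always defined, so we must show that $[Nf_1,\ldots,Nf_n]$ in $\IntpC$ is also always defined and equals this composite. Unfolding Definition~\ref{def-intp-comp}, the would-be partial composition is a trace whose trace object is $\vecc\Am=I\otimes\cdots\otimes I=I$. By vanishing~I, this trace is total and equals the identity operation, so it suffices to compute the inner morphism $\sem{N\vec f\,}\circ(A_0\otimes\sigma_{I,I})$. A short induction on the length of the path, using the base case and recursive step defining $\sem{-}$, shows that this morphism reduces (all occurrences of $\sigma_{I,-}$ and $\sigma_{-,I}$ becoming identities, all $I$-labelled wires vanishing under strictness) to $f_n\circ\cdots\circ f_1$, as required.

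The only potential obstacle is keeping this inductive calculation straight: the graphical diagram for $\sem{-}$ has several $I$-decorated crossings and wires that look nontrivial but all trivialize under strictness. However, no axiom of partial trace beyond vanishing~I is needed, so the argument is essentially bookkeeping rather than genuinely subtle. Once it is in hand, $N$ is established as a full and faithful functor of strict symmetric monoidal paracategories from $\cC$ into the compact closed paracategory $\IntpC$, which proves the theorem.
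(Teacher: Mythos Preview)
Your proposal is correct and follows essentially the same approach as the paper. The paper's proof is extremely terse---it simply asserts that functoriality ``follows immediately from the definition of composition on $\IntpC$, strictness, and vanishing I,'' and that fullness, faithfulness, and preservation of tensor and symmetry are ``obvious''---whereas you have spelled out the same reasoning in more detail (in particular the induction showing $\sem{N\vec f\,}=f_n\circ\cdots\circ f_1$ and the collapse of the $I$-decorated symmetry and tensor data), but the underlying argument is identical.
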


\begin{proof}
  To prove functoriality, note that we are considering the category
  $\cC$ as a paracategory with composition $[f_1,\ldots,f_n] =
  f_n\circ\ldots\circ f_1$. It follows immediately from the definition
  of composition on $\IntpC$, strictness, and vanishing I that
  $[N(f_1),\ldots, N(f_n)] \kleeneeq [f_1,\ldots, f_n]
  = \sem{f_1,\ldots, f_n} = f_n\circ\ldots\circ f_1 =
  N(f_n\circ\ldots\circ f_1)$, so $N$ is a functor.  The fact that $N$
  is full and faithful is also obvious, as is the fact that it
  preserves tensor and symmetry.
\end{proof}

\begin{theorem}\label{thm-n-preserves-trace}
  The functor $N:\cC\rightarrow \IntpC$ preserves and reflects the
  trace, i.e., for all morphisms $f:A\otimes U\rightarrow B\otimes U$
  and $g:A\to B$ in $\cC$, we have $\Tr^U(f)=g$ iff ${\Tr^{NU}}N(f)=N(g)$.
\end{theorem}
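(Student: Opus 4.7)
The plan is to establish the stronger Kleene equality
\[
\Tr^{NU}_{NA,NB}(N(f)) ~\kleeneeq~ N(\Tr^U_{A,B}(f)),
\]
from which the biconditional of the theorem follows immediately: since $N$ is faithful by Theorem~\ref{thm-n-faithful} and $N(g)=g$ at the level of underlying $\cC$-morphisms, this Kleene equality says that the two traces are simultaneously defined or undefined, and equal when defined.

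To compute the left-hand side, I would apply the canonical trace formula for compact closed paracategories from Remark~\ref{rem-trace-cc-paracat}, giving
\[
\Tr^{NU}_{NA,NB}(N(f)) ~\kleeneeq~ [\,\id_{NA}\x\eta_{NU},~\id_{NA}\x\sigma_{(NU)^*,NU},~N(f)\x\id_{(NU)^*},~\id_{NB}\x\eps_{NU}\,].
\]
Each arrow in this path unpacks to a concrete morphism in $\cC$: since $NA=(A,I)$, $NU=(U,I)$, and $(NU)^*=(I,U)$ all have trivial negative (resp.\ positive) parts, the arrows $\eta_{NU}$ and $\eps_{NU}$ reduce to $\id_U$ in $\cC$, and $\sigma_{(NU)^*,NU}$ reduces to $\id_{U\x U}$, while $N(f)\x\id_{(NU)^*}$ translates via Definition~\ref{def-intpc-tensor} to $f$ on the positive $A\x U$ wires with an idle $U$ on the negative side.

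Next, I would unfold the length-$4$ composition in $\IntpC$ using Definition~\ref{def-intp-comp}: it becomes a single trace in $\cC$ over $\Am_0\x\Am_1\x\Am_2\x\Am_3 = I\x U\x U\x U$ of a diagram whose only non-identity component is $f$. Vanishing~I eliminates the $I$-factor, and then a graphical simplification using dinaturality (to slide the identity loops past each other) together with vanishing~II (to merge the remaining nested traces) collapses the trace over $U\x U\x U$ down to $\Tr^U_{A,B}(f)$. The manipulation is very much in the style of Lemma~\ref{lem-path-pq} and Lemma~\ref{lem-intpc-paracat} and is best carried out in the boxed graphical language of Section~\ref{ssec-graphical-partial}.

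The main obstacle is tracking definedness through these graphical moves: each $\IntpC$-composition is only partially defined, precisely because it is governed by a $\cC$-trace, so one must check that the directed Kleene equalities used in the reduction go in the right direction to yield a full Kleene equality rather than merely a one-sided one. The observation that makes this feasible is that three of the four arrows in the path are essentially identities in $\cC$, so the entire definedness question for $\Tr^{NU}N(f)$ reduces to the single condition $\Tr^U_{A,B}(f)\defined$.
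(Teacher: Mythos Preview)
Your proposal is correct and follows essentially the same route as the paper: both reduce the statement to the Kleene equality $N(\Tr^U(f))\kleeneeq\Tr^{NU}N(f)$, unfold the canonical trace in $\IntpC$ as the partial composition of a length-$4$ path, observe that all but one of the arrows in that path are identities in $\cC$, and then use the partial trace axioms (yanking, strength, naturality, vanishing, dinaturality) in the boxed graphical language to collapse the resulting $\cC$-trace over several copies of $U$ down to $\Tr^U_{A,B}(f)$. The paper presents this as a single chain of diagrams rather than your verbal description, but the content is the same.
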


\begin{proof}
  Recall that the trace on $\IntpC$ is defined as in
  Remark~\ref{rem-trace-cc-paracat}. Because $N$ is full and faithful,
  the claimed property is equivalent to $N(\Tr^U(f)) \kleeneeq
  {\Tr^{NU}}N(f)$. Using similar methods as in previous proofs, we
  have:
  \[ N(\Tr^U(f))
    ~\stackrel{\rm(def)}{\kleeneeq}~
    \Tr^U(f)
    ~\kleeneeq~
    \mp{.13}{\includegraphics[scale=0.5]{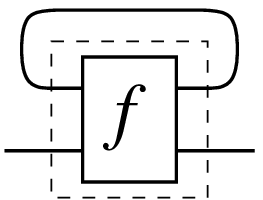}}
    ~\kleeneeq~
    \mp{.13}{\includegraphics[scale=0.5]{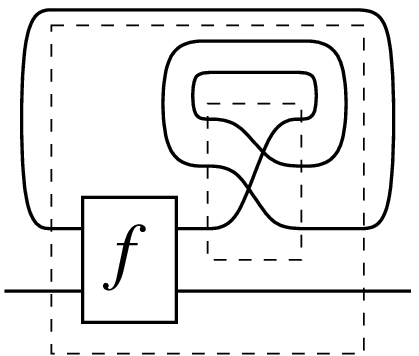}}
    ~\kleeneeq~
    \mp{.13}{\includegraphics[scale=0.5]{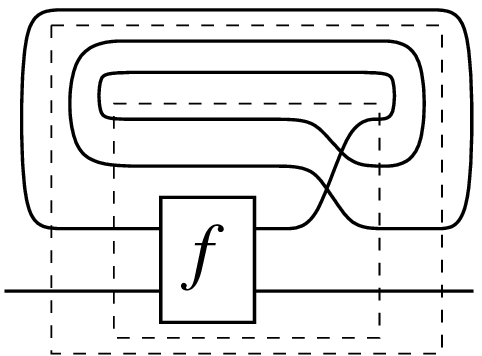}}
    \]\[
    ~\kleeneeq~
    \mp{.13}{\includegraphics[scale=0.5]{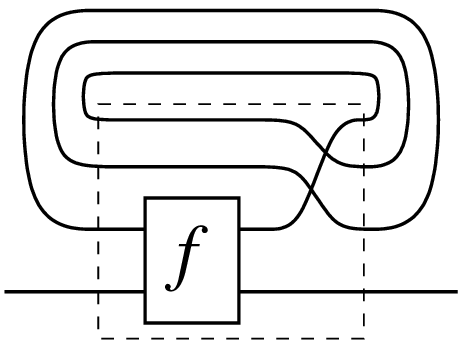}}
    ~\kleeneeq~
    \mp{.13}{\includegraphics[scale=0.5]{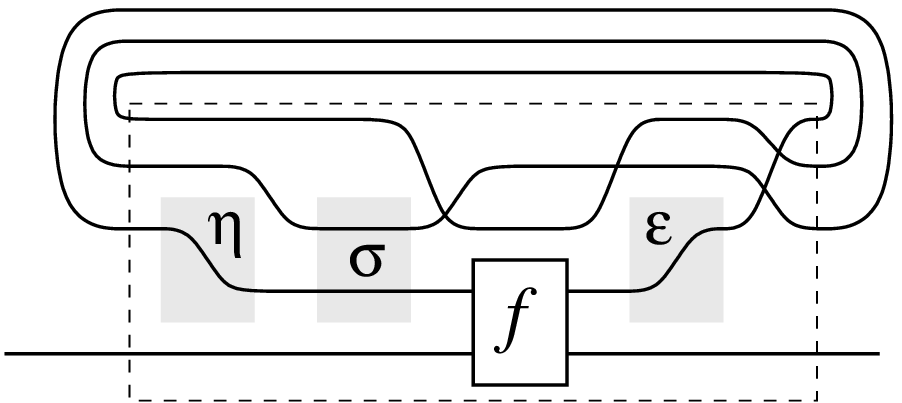}}
    ~\stackrel{\rm(def)}{\kleeneeq}~
    {\Tr^{NU}}N(f).
  \]
\end{proof}

\subsection{The universal property of $\IntpC$}

The category $\IntpC$ is in fact the free compact closed paracategory
over a given partially traced category. To be able to state this
theorem, we first need to define the notation of a (non-strict)
functor of compact closed paracategories.

\begin{definition}
  An isomorphism $m:A\to B$ in a symmetric monoidal 
  paracategory is said to be {\em total} if $[\id_C\x m, f]$, $[g,
  \id_C\x m]$, $[\id_C\x m^{-1}, h]$, and $[k, \id_C\x m^{-1}]$ are
  defined, for all $f:C\x B\to D$, $g:D\to C\x A$, $h:C\x A\to D$, and
  $k:D\to C\x B$.
\end{definition}

\begin{definition}
  Let $\cD$ and $\cD'$ be compact closed paracategories. A
  (non-strict) functor of compact closed paracategories $K:\cD\to\cD'$
  is a functor of paracategories that is equipped with total natural
  isomorphisms $m_{A,B}:K(A)\otimes'K(B)\rightarrow K(A\otimes B)$,
  $m_I:I'\rightarrow K(I)$, and $m_{*}:(KA)^* \to K(A^*)$,
  respecting all the structure.
\end{definition}

\begin{remark}
  In the presence of $m_{A,B}$ and $m_I$, a unique coherent
  isomorphism $m_{*}:(KA)^* \to K(A^*)$ automatically exists, but its
  totality is an additional property that must be required.
\end{remark}

\begin{theorem}\label{thm-univ-intpc}
  Let $\cC$ be a partially traced symmetric (strictly) monoidal
  category, $\cD$ a compact closed paracategory, and $G:\cC\to\cD$ a
  trace-preserving functor of symmetric monoidal paracategories. Then
  there exists an essentially unique (non-strict) functor of compact
  closed paracategories $K:\IntpC\to\cD$ such that
  \[ \xymatrix@C+5ex{
    \cC \ar[r]^<>(.5){N}\ar[dr]_{G} & \IntpC \ar[d]^{K} \\
    & \cD.
    }
  \]
\end{theorem}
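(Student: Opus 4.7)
The plan is to imitate the classical universal property of the $\Int$-construction from~\cite{JSV96}, but carefully managing partiality. Throughout, I write as if $\cD$ were strict for readability; the monoidal coherence isomorphisms of $K$ will handle non-strictness.

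First I would define $K$ on objects by $K(\Ap,\Am) := G(\Ap)\otimes G(\Am)^*$. This is forced (up to isomorphism) by the requirement $K\circ N\cong G$ together with the fact that in any compact closed paracategory $(I,\Am)$ is a dual of $(\Am,I)=N(\Am)$, so $K(I,\Am)$ must be isomorphic to $G(\Am)^*$, and $(\Ap,\Am)\cong(\Ap,I)\otimes(I,\Am)$. The coherence isomorphisms $m_{A,B}$, $m_I$, $m_*$ are then assembled from the compact closed structure of $\cD$ and from the monoidal structure maps of $G$; their totality follows from the totality axioms of compact closed paracategories in the definition of a compact closed paracategory (every composite with $\eta$, $\eps$ of the shape occurring in the coherence cells is defined).

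Next I would define $K$ on an arrow $f:(\Ap,\Am)\to(\Bp,\Bm)$, i.e.\ on $f:\Ap\otimes\Bm\to\Bp\otimes\Am$ in $\cC$, by the usual ``name'' construction:
\[
K(f) ~:=~ [\,1_{GA^+}\otimes\eta_{GB^-}\otimes 1_{G(A^-)^*},~(Gf)\otimes 1,~1_{GB^+}\otimes\eps_{GA^-}\otimes 1_{G(B^-)^*}\,],
\]
where the middle term also incorporates the symmetries that swap $G(B^-)^*$ and $G(A^-)^*$ into the correct places. Each bracket is defined by the totality clauses of a compact closed paracategory (the first and last have the shape $[1\otimes\eta,\,-\,]$ and $[\,-,\,1\otimes\eps]$ respectively). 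From here, preservation of identities, of the tensor, and of symmetry is diagrammatic and routine; preservation of $\eta$ and $\eps$ reduces, by the snake equations in $\cD$, to an identity. The key verification is that $K$ is a functor of paracategories, i.e.\ that $[p_1,\ldots,p_n]\defined$ in $\IntpC$ implies $K[p_1,\ldots,p_n]\kleeneeq[Kp_1,\ldots,Kp_n]$ in $\cD$. Unfolding the definition of composition in $\IntpC$ from Definition~\ref{def-intp-comp}, this reduces to a diagrammatic identity in $\cC$ involving a single trace on the left and, on the right, a long chain of $\eta$'s and $\eps$'s in $\cD$; the two sides match because (i) $G$ preserves the trace on $\cC$ (hypothesis), (ii) the trace on $\cD$ coincides with the one built from $\eta,\eps$ via Remark~\ref{rem-trace-cc-paracat}, and (iii) the totality properties of compact closed paracategories justify rearranging composites of $\eta$'s and $\eps$'s freely.

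For the triangle $K\circ N\cong G$, note that $KN(A)=G(A)\otimes G(I)^*\cong G(A)\otimes I^*\cong G(A)$ via $m_I$ and the canonical isomorphism $I^*\cong I$, and the resulting family is natural in $A$ and monoidal. For essential uniqueness, suppose $K':\IntpC\to\cD$ is another such functor. On objects, compactness forces $K'(\Ap,\Am)\cong K'(N\Ap)\otimes K'(N\Am)^*\cong G(\Ap)\otimes G(\Am)^*$; on morphisms, every arrow of $\IntpC$ is, by the definition of $\IntpC$ as ``compact closure'', uniquely expressible (up to the required coherence isomorphisms) in terms of $N$-images, symmetries, $\eta$, and $\eps$, so the value of $K'$ is determined up to a canonical natural isomorphism.

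The main obstacle will be step (iii) above: the verification that $K$ preserves partial composition. One must show that the diagram of $\eta$'s and $\eps$'s produced by composing $Kp_1,\ldots,Kp_n$ in $\cD$ reduces, via yanking, naturality, and the snake equations, to the single trace that defines $[p_1,\ldots,p_n]$ in $\IntpC$, pushed through $G$; handling Kleene-equality bookkeeping (using that compositions with $\eta,\eps$ are always total in $\cD$, so the only partiality can come from the single application of $G$ to a trace in $\cC$) is what makes the argument go through rather than being merely a recycling of the total $\Int$-construction proof.
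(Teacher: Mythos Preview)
Your proposal is correct and follows essentially the same approach as the paper: define $K(\Ap,\Am)=G(\Ap)\otimes G(\Am)^*$ on objects and, on an arrow $f$, take the ``name'' composite $[\,1\otimes\eta,\ 1\otimes\sigma,\ G(f)\otimes\sigma,\ 1\otimes\eps\,]$ in $\cD$, observing that the totality clauses of a compact closed paracategory make this defined; the paper then dismisses the remaining verifications (functoriality, the triangle, essential uniqueness) as ``tedious but routine,'' whereas you actually sketch them, so in that respect your outline is more detailed than the paper's own proof.
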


\begin{proof}
  Without loss of generality we write as if $\cD$ were also strictly
  monoidal. Let us also write $G(A)=\bar A$.  The construction of the
  functor $K:\IntpC\rightarrow\cD$ is similar to that of Joyal,
  Street, and Verity in {\cite{JSV96}}. On objects, it is defined as
  $K(A,B) = \bar A\x \bar B^*$. A morphism $f:(A,B)\to(C,D)$ is given
  by $f: A\x D\to C\x B$ in $\cC$, and we have $G(f):\bar A\x \bar
  D\to \bar C\x \bar B$. Then $K(f):\bar A\x\bar B^*\to\bar C\x\bar
  D^*$ is defined as
  \[ K(f) := [\id_{\bar A}\x\eta_{\bar D}\x\id_{\bar B^*},~
  \id_{\bar A}\x\sym_{\bar D^*,\bar D}\x\id_{\bar B^*},~
  G(f)\x\sym_{\bar D^*,\bar B^*},~
  \id_{\bar C}\x\eps_{\bar B}\x\id_{\bar D^*}].
  \]
  It follows from the axioms of compact closed paracategories that
  $K(f)\defined$. The remaining properties are tedious but
  routine to verify.
\end{proof}

\begin{remark}
  Even when $\cC$, $\cD$, and $G$ are strict, one cannot in general
  expect $K$ to be strict. This is because the objects of the category
  $\IntpC$ satisfy special equations, such as $A\x B^* = B^*\x A$ for
  all $A,B$ in the image of $N$. Since one cannot expect $\cD$ to
  satisfy such equations, $K$ cannot in general be strictly monoidal.
\end{remark}

\section[Representation theorem]
{Representation theorem for partially traced categories}
\label{sec-representation}

By combining the results of the previous sections, we arrive at the
main theorem of this paper.

\begin{theorem}
  Every partially traced category can be faithfully embedded in a
  totally traced category. Moreover, the embedding is trace preserving
  and reflecting.
\end{theorem}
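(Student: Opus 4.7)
The plan is to obtain the embedding as a composition of the two constructions developed in Sections~\ref{sec-paracat} and~\ref{sec-pint}, namely the partial $\Int$-construction followed by the paracategorical completion. More precisely, given a partially traced category $\cC$, I would first invoke the standard strictification result (every monoidal category is equivalent to a strict one) to reduce to the case where $\cC$ is strictly monoidal; since strictification is an equivalence of symmetric monoidal categories, the partial trace transports and the composite embedding we build can be precomposed with the strictification equivalence.

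Next, I would apply Theorem~\ref{thm-n-faithful} to produce the full and faithful functor of paracategories $N:\cC\to\IntpC$ into the compact closed paracategory $\IntpC$, and then apply Theorem~\ref{thm-faithful-embed-comp-closed-para} to obtain the faithful functor $F:\IntpC\to\path(\IntpC)/{\sim}$ into a (total) compact closed category. By Proposition~\ref{prop-canonical-trace}, any compact closed category carries a canonical total trace, so $\path(\IntpC)/{\sim}$ is in particular totally traced. The desired embedding is then the composite
\[
 \cC \xrightarrow{\;N\;} \IntpC \xrightarrow{\;F\;} \path(\IntpC)/{\sim}.
\]
This composite is faithful because both $N$ and $F$ are, and it is a symmetric monoidal functor since each factor is.

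For trace preservation and reflection, I would argue in two stages. By Theorem~\ref{thm-n-preserves-trace}, $N$ preserves and reflects the partial trace in the sense that $\Tr^U(f)=g$ in $\cC$ iff $\Tr^{NU}N(f)=N(g)$ holds in the compact closed paracategory $\IntpC$ (with the trace defined via $\eta,\eps,\sigma$ as in Remark~\ref{rem-trace-cc-paracat}). By Theorem~\ref{thm-cc-trace}, $F$ in turn preserves and reflects that same trace when moving from $\IntpC$ to its completion, which coincides with the canonical trace of the compact closed category $\path(\IntpC)/{\sim}$ because both are built from $\eta$, $\eps$, and $\sigma$ in the same way and $F$ preserves these structural morphisms by construction. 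Chaining the two equivalences gives $\Tr^U(f)=g$ in $\cC$ iff $\Tr^{FNU}FN(f)=FN(g)$ in $\path(\IntpC)/{\sim}$, which is precisely the required preservation and reflection.

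The main conceptual obstacle has essentially been absorbed into the preceding sections: showing that composition in $\IntpC$ really forms a paracategory (Lemma~\ref{lem-intpc-paracat}) and that the paracategorical completion exists and is compact closed (Theorem~\ref{thm-faithful-embed-comp-closed-para}). What remains for this theorem is the (minor) bookkeeping of checking that the canonical trace on the completion agrees, under $F$, with the trace defined in Remark~\ref{rem-trace-cc-paracat} on $\IntpC$; this reduces to the observation that $F$ sends $\eta$, $\eps$, $\sigma$ in $\IntpC$ to the corresponding compact closed data in $\path(\IntpC)/{\sim}$, together with an application of Corollary~\ref{cor-path-singleton} exactly as in the proof of Theorem~\ref{thm-cc-trace}. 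Once this identification is made, faithfulness and trace preservation/reflection of the composite $F\circ N$ follow immediately.
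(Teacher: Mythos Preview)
Your proposal is correct and follows essentially the same route as the paper: reduce to the strict case, compose the embedding $N:\cC\to\IntpC$ from Theorems~\ref{thm-n-faithful} and~\ref{thm-n-preserves-trace} with the completion $F:\IntpC\to\path(\IntpC)/{\sim}$ from Theorem~\ref{thm-faithful-embed-comp-closed-para}, note that the target is compact closed hence totally traced, and invoke Theorem~\ref{thm-cc-trace} for the trace preservation and reflection of $F$. The extra paragraph you wrote about matching the canonical trace on the completion with the paracategory trace via $\eta,\eps,\sigma$ is already the content of Theorem~\ref{thm-cc-trace}, so no additional work is needed there.
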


\begin{proof} Let $\cC$ be a partially traced category. We may without
  loss of generality assume that $\cC$ is strictly monoidal. By
  Theorems~\ref{thm-n-faithful} and~\ref{thm-n-preserves-trace}, there
  is a full and faithful, trace preserving and reflecting embedding
  $N:\cC\to\IntpC$ of $\cC$ in a compact closed paracategory. By
  Theorem~\ref{thm-faithful-embed-comp-closed-para}, there is a
  faithful embedding $F:\IntpC\to\path(\IntpC)/{\sim}$ of $\IntpC$ in
  a compact closed category. Since $\path(\IntpC)/{\sim}$ is compact
  closed, it is totally traced, and by Theorem~\ref{thm-cc-trace}, $F$
  is trace preserving and reflecting.
\end{proof}

\begin{corollary}
  Every partially traced category arises from a totally traced
  category by the construction of Proposition~\ref{prop-subcategory}.
\end{corollary}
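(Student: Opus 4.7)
The plan is to observe that this corollary is essentially a restatement of the main theorem combined with the defining condition of Proposition~\ref{prop-subcategory}. Let $(\cC,\Tr)$ be a partially traced category. Applying the main theorem just proved, I obtain a faithful, trace preserving and reflecting strong symmetric monoidal embedding $H:\cC\to\cD$ into a totally traced category~$\cD$. My goal is to show that the partial trace on $\cC$ induced on $\cC$ from $\cD$ via $H$ using Proposition~\ref{prop-subcategory} coincides with the original partial trace $\Tr$.

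Unwinding Proposition~\ref{prop-subcategory}, the induced partial trace $\widehat{\Tr}$ is defined by
\[
\widehat{\Tr}^U_{A,B}(f) = g
\quad\Longleftrightarrow\quad
H(g) = \Tr^{HU}_{HA,HB}(m^{-1}_{B,U}\circ H(f)\circ m_{A,U}),
\]
where $m$ denotes the (strong) monoidal coherence isomorphisms of $H$. I must verify that $\widehat{\Tr}^U_{A,B}(f) \kleeneeq \Tr^U_{A,B}(f)$ for every $f:A\otimes U\to B\otimes U$. The left-to-right direction is precisely the fact that $H$ \emph{reflects} the trace: whenever the right-hand side above holds with some witness $g$, faithfulness of $H$ makes $g$ unique, and trace reflection ensures that $\Tr^U_{A,B}(f)$ is defined and equals $g$. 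The right-to-left direction is precisely the fact that $H$ \emph{preserves} the trace: if $\Tr^U_{A,B}(f)=g$, then applying $H$ yields the displayed equation, so $\widehat{\Tr}^U_{A,B}(f)=g$ as well.

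Combining both directions, $\widehat{\Tr} = \Tr$ as partial operations on $\cC$, which shows that $\cC$ arises from the totally traced category $\cD$ by the construction of Proposition~\ref{prop-subcategory} applied to the embedding $H$. The only mild subtlety is book-keeping with the coherence isomorphisms $m_{A,U}$, which is trivial once we recall that we may take $\cC$ to be strictly monoidal (as in the proof of the main theorem), and that the embedding $H$ constructed as the composite of the functors from Theorems~\ref{thm-n-faithful} and~\ref{thm-faithful-embed-comp-closed-para} is strict monoidal, so the conjugation by $m$ disappears. There is no real obstacle beyond this: the corollary is a direct corollary, obtained simply by matching the preservation-and-reflection property of the embedding with the definition of the induced partial trace.
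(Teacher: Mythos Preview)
Your proposal is correct and is precisely the intended argument. The paper states this corollary without proof, treating it as immediate from the main theorem; your unpacking---identifying trace preservation with one direction of $\widehat{\Tr}=\Tr$ and trace reflection with the other, and disposing of the coherence isomorphisms by strictness of the embedding $F\circ N$---is exactly what is meant.
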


\begin{corollary}
  Any equational law of totally traced categories also holds in all
  partially traced categories, provided that the left-hand side and
  right-hand side are both defined. In particular, reasoning in the
  graphical language of traced monoidal categories is sound for
  proving the equality of two morphisms in partially traced
  categories, provided both morphisms are defined. The morphisms used
  in intermediate steps do not need to be defined.
\end{corollary}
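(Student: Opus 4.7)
The plan is to derive this corollary directly from the main theorem, which supplies a faithful, trace-preserving and trace-reflecting embedding $F:\cC\to\cD$ of the given partially traced category $\cC$ into a totally traced category $\cD$. Given a formal equation $s(\vec x)=t(\vec x)$ in the first-order language of (totally) traced symmetric monoidal categories that holds in every totally traced category, and given morphisms $\vec a$ in $\cC$ such that both $s^{\cC}(\vec a)$ and $t^{\cC}(\vec a)$ are defined, I want to conclude $s^{\cC}(\vec a)=t^{\cC}(\vec a)$.

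The core step is a lemma, proved by structural induction on a term $u$ in the language: if $u^{\cC}(\vec a)$ is defined, then $F(u^{\cC}(\vec a))=u^{\cD}(F\vec a)$. The cases of variables, identities, composition, tensor, and symmetry are immediate because $F$ is a faithful strong symmetric monoidal functor. The trace case $u=\Tr^U(v)$ is where we need trace preservation: if $u^{\cC}(\vec a)=\Tr^U(v^{\cC}(\vec a))$ is defined, then $v^{\cC}(\vec a)$ is itself defined (partial operations can only be applied to defined inputs), so the inductive hypothesis gives $F(v^{\cC}(\vec a))=v^{\cD}(F\vec a)$, and then $F(u^{\cC}(\vec a))=\Tr^{FU}(F(v^{\cC}(\vec a)))=\Tr^{FU}(v^{\cD}(F\vec a))=u^{\cD}(F\vec a)$.

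Applying the lemma to both $s$ and $t$ gives $F(s^{\cC}(\vec a))=s^{\cD}(F\vec a)$ and $F(t^{\cC}(\vec a))=t^{\cD}(F\vec a)$. Since the equation is a law of totally traced categories and $\cD$ is totally traced, $s^{\cD}(F\vec a)=t^{\cD}(F\vec a)$, hence $F(s^{\cC}(\vec a))=F(t^{\cC}(\vec a))$. Faithfulness of $F$ now yields $s^{\cC}(\vec a)=t^{\cC}(\vec a)$, completing the proof of the first claim. Crucially, the proof of $s^{\cD}=t^{\cD}$ carried out inside $\cD$ may invoke intermediate morphisms not lying in the image of $F$; such morphisms correspond to terms whose $\cC$-interpretation might be undefined, so this is precisely the assertion that \emph{intermediate steps do not need to be defined}. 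The graphical-language statement then follows immediately via the coherence theorem for traced monoidal categories cited in Section~\ref{ssec-graphical-total}, which ensures that any graph-isomorphism-valid equation of diagrams corresponds to a formal equational law of totally traced categories.

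The main point requiring care, rather than any real obstacle, is the definedness bookkeeping in the inductive step: one must check that definedness of the outer term truly forces definedness of each immediate subterm before the induction hypothesis is invoked. Beyond this, the argument is essentially a translation through the embedding $F$, with all heavy lifting already supplied by the main theorem and by Theorems~\ref{thm-n-preserves-trace} and~\ref{thm-cc-trace}.
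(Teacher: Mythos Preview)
Your proposal is correct and follows exactly the approach of the paper's own proof, which consists of the single sentence ``Via the faithful embedding in a totally traced category, the reasoning really takes place in that category.'' You have simply unpacked this sentence into a careful structural induction on terms, making explicit the use of faithfulness, strong monoidality, and trace preservation of the embedding; the content is the same.
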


\begin{proof}
  Via the faithful embedding in a totally traced category, the
  reasoning really takes place in that category.
\end{proof}

Moreover, the category $\path(\IntpC)/{\sim}$ satisfies the following
universal property.

\begin{theorem}
  Let $\cC$ be a partially traced category and $\cD$ a compact closed
  category. If $G:\cC\rightarrow\cD$ is a traced symmetric monoidal
  functor, then there exists an essentially unique strong symmetric
  monoidal functor $L:\path(\IntpC)/{\sim}\rightarrow\cD$ such that
  \[ \xymatrix@=25pt{
    \cC\ar[r]^<>(.5){N}\ar[rrd]_{G}
    &\IntpC\ar[r]^<>(.5){F}
    & \path(\IntpC)/{\sim} \ar[d]^{L}\\
    && \cD.
  }
  \]
\end{theorem}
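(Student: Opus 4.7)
The plan is to obtain $L$ by chaining together the two universal properties already established for the two stages of the embedding. Without loss of generality, assume $\cC$ is strictly monoidal. Since $\cD$ is compact closed, it is in particular a compact closed paracategory (with total composition), and $G:\cC\to\cD$, being a traced symmetric monoidal functor, is a trace-preserving functor of symmetric monoidal paracategories in the sense needed to apply Theorem~\ref{thm-univ-intpc}.

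First, I would apply Theorem~\ref{thm-univ-intpc} directly to $G$ to obtain an essentially unique non-strict functor of compact closed paracategories $K:\IntpC\to\cD$ such that $K\circ N\iso G$. Concretely, $K$ is given by the formula in the proof of that theorem, namely on objects $K(A,B)=\bar A\x\bar B^*$ and on morphisms by conjugating $G(f)$ with $\eta$'s and $\eps$'s of $\cD$. The existence of $K$ already packages the compact closed structure of $\cD$ needed to unfold the $\Int$-construction.

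Next, I would invoke the compact closed, non-strict analog of Theorem~\ref{thm-freeness} (the result asserted immediately after that theorem). This analog says that $F:\IntpC\to\path(\IntpC)/{\sim}$ is the free compact closed category on the compact closed paracategory $\IntpC$, so any functor of compact closed paracategories from $\IntpC$ to a compact closed category extends essentially uniquely to a strong symmetric monoidal functor. Applying it to $K$ yields a strong symmetric monoidal functor $L:\path(\IntpC)/{\sim}\to\cD$ with $L\circ F\iso K$, and hence $L\circ F\circ N\iso K\circ N\iso G$, as desired. Essential uniqueness of $L$ follows from combining the two uniqueness statements: if $L'$ is another such functor, then $L'\circ F$ is a functor of compact closed paracategories out of $\IntpC$ composing with $N$ to give $G$, so $L'\circ F\iso K$ by Theorem~\ref{thm-univ-intpc}, and then $L'\iso L$ by the uniqueness clause of the analog of Theorem~\ref{thm-freeness}.

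The main obstacle is the honest verification of the compact closed, non-strict version of Theorem~\ref{thm-freeness}. Theorem~\ref{thm-freeness} as stated is for strict symmetric monoidal functors, whereas $K$ is only strong monoidal (as explicitly noted in the remark following Theorem~\ref{thm-univ-intpc}, since $\IntpC$ has literal equalities like $A\x B^* = B^*\x A$ that $\cD$ need not respect). To handle this, one defines $L$ on objects by $L(A)=K(A)$ and on morphisms by $L(\overline{p_1,\dots,p_n})=K(p_n)\circ\dots\circ K(p_1)$, using the coherence isomorphisms of $K$ where needed; well-definedness relative to $\sim$ follows because the class of path-pairs that $L$ identifies is readily checked to be a congruence relation, and $\sim$ is the smallest such. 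The compact-closed structure is preserved via the isomorphisms $m_*:(KA)^*\to K(A^*)$ that are part of the data of $K$, and coherence is routine. With this analog in place, the remainder of the proof is the bookkeeping of diagram chases sketched above.
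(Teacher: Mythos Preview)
Your proposal is correct and follows exactly the paper's approach: the paper's proof is simply ``By combining Theorems~\ref{thm-univ-intpc} and~\ref{thm-freeness}'' together with the factorization diagram through $K$. You have additionally spelled out the subtlety about $K$ being only strong monoidal and the need for the compact closed analog of Theorem~\ref{thm-freeness}, which the paper merely asserts after that theorem without proof.
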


\begin{proof}
  By combining Theorems~\ref{thm-univ-intpc} and {\ref{thm-freeness}}.
  \[\xymatrix@=25pt{
    \cC\ar[rr]^{N}\ar[rrrrd]_{G}&&\IntpC\ar[rr]^{F}\ar[drr]^{K}& &\path(\IntpC)/{\sim} \ar[d]^{L}\\
    &&&& \cD.
  }\]
\end{proof}

\section{Discussion and future work}

We established that the partially traced categories, in the sense of
Haghverdi and Scott, are precisely the monoidal subcategories of
totally traced categories.  This was proved by a partial version of
Joyal, Street, and Verity's $\Int$-construction, and by considering a
strict symmetric compact closed version of Freyd's paracategories.

Some readers may wonder whether we have stated these results at the
right level of generality. It has been suggested that one could start
from partially traced paracategories, or perhaps even partially traced
paramonoidal paracategories, and still get an analogous result.
Indeed, this can probably be done. One can a priori aim for a
representation theorem of the form ``every partially traced
paracategory can be faithfully embedded in a totally traced category,
in such a way that the operations are preserved and reflected''. This
uniquely determines the notion of partially traced paracategory,
namely, they are precisely the reflexive monoidal subgraphs of totally
traced categories. One may then go through the exercise of
axiomatizing this notion. We remark that such an axiomatization is
necessarily quite strange; for example, it can happen that $\Tr([\vec
p\,])$ is defined even when $[\vec p\,]$ is undefined. Whatever
axiomatization one discovers will immediately be rendered obsolete by
the representation theorem, because it is in any case easier to reason
in the larger totally traced category. Thus, in the absence of natural
examples of such paracategories, it is an essentially futile exercise
to try to axiomatize them.

By contrast, the notion of partially traced category, while also made
somewhat obsolete by our representation theorem, is a pre-existing
notion that had been studied in the literature and for which many
interesting examples exist, including some examples that do not {\em
  obviously} arise as subcategories of a totally traced category. Thus
we believe this is indeed a good level of generality.

One question that we did not answer is whether specific partially
traced categories can be embedded in totally traced categories in a
``natural'' way. For example, the category of finite dimensional
vector spaces, with the biproduct $\oplus$ as the tensor, can be
equipped with a natural partial trace in several ways. By our proof,
it follows that it can be faithfully embedded in a totally traced
category. However, we do not know any concrete ``natural'' description
of such a totally traced category (i.e., other than the free one
constructed in our proof).

\bibliographystyle{abbrv}
\bibliography{ptrace}

\end{document}